\documentclass[reqno]{amsart}

\usepackage[margin=1.5in]{geometry}
\usepackage{palatino}
\usepackage{mathpazo}

\usepackage{amsmath, amssymb, amsthm, mathrsfs}
\usepackage{tikz-cd}
\usepackage{hyperref}
\usepackage{enumerate}

\newtheorem{theorem}{Theorem}[section]
\newtheorem{lemma}[theorem]{Lemma}

\newtheorem{observation}[theorem]{Observation}
\newtheorem{corollary}[theorem]{Corollary}
\newtheorem*{theorem*}{Theorem}

\theoremstyle{definition}
\newtheorem{definition}[theorem]{Definition}

\newtheorem{example}[theorem]{Example}

\theoremstyle{remark}
\newtheorem{remark}[theorem]{Remark}

\newcommand{\LL}{\mathbb{L}}
\newcommand{\TT}{\mathbb{T}}
\newcommand{\Gm}{\mathbb{G}_m}
\newcommand{\cO}{\mathcal{O}}
\newcommand{\cL}{\mathcal{L}}
\newcommand{\cK}{\mathcal{K}}
\newcommand{\K}{\mathbb{K}}
\newcommand{\DR}{\mathbf{DR}}
\newcommand{\Spec}{\mathrm{Spec\,}}
\newcommand{\dArt}{\mathbf{dArt}}
\newcommand{\Acl}{\mathcal{A}^{2, \mathrm{cl}}}
\newcommand{\bfem}[1]{\textbf{\textit{#1}}}
\newcommand{\R}{{\mathbb R}}

\newcommand{\Arxiv}[1]{\href{http://arxiv.org/abs/#1}{#1}}

\begin{document}

\title[Transversality and Derived Legendrian Intersections]{Equivariant Quotients of derived symplectic spaces and \\ 
Legendrian Intersection Theorem}
\author{Efe \.{I}zbudak \and Kadri \.{I}lker Berktav}
\date{\today}

\begin{abstract}
The classical transversality lemma of contact geometry constructs a contact structure on a hypersurface transverse to a  Liouville vector field using point-set topology and local flows. This paper translates the classical transversality lemma into the context of derived algebraic geometry and provides the derived Legendrian intersection theorem, along with various applications to moduli theory. 
  
In brief,  we first prove that taking the quotient of a derived symplectic space descends the symplectic data to a contact structure, avoiding a transverse hypersurface, where  the fundamental vector field of a weight 1 $\Gm$-action, in the derived setting, replaces the classical Liouville vector field.  Secondly, the derived Legendrian intersection theorem is proven using base change, an $\infty$-categorical descent cube, and $\Gm$-equivariant lifts along the symplectification projection.
  
As applications of the main results, we first examine the derived geometry of the discriminant loci of 1-jet bundles and show that these loci carry a $(-1)$-shifted contact structure. In addition, we show that our results apply to certain moduli problems, including projective Higgs bundles, $\ell$-adic local systems, and Lie 2-groups, and we provide further examples of contact derived moduli stacks.
\end{abstract}

\maketitle
\tableofcontents

\section{Introduction and Summary}

A fundamental lemma in classical differential geometry \cite[Lemma 1.4.5]{Geiges} states that a Liouville vector field on a symplectic
manifold induces a contact structure on any transverse hypersurface.
This classical formulation  relies on smooth local flows, differential forms,
and point-set transversality \cite[Section
1.4]{Geiges}. These methods do not apply to moduli
problems in algebraic geometry, where spaces often have
singularities, nilpotence, and non-trivial automorphism groups. Similar drawbacks emerge for the intersection problems as well.
To address these issues,  nowadays folklore suggests, for instance, the use of shifted geometric (e.g., symplectic) structures, equivariant group actions, exact triangles, and derived cotangent complexes, or in short, the framework of Derived Algebraic Geometry.

It is well known that DAG essentially provides a new setting for dealing with non-generic situations in geometry, such as non-transversal intersections and “bad" quotients. It also offers generalized versions of certain familiar geometric structures with various outcomes. That approach is what we intend to adapt in this paper.
\vspace{0.05in}

\paragraph{\bfem{Conventions and notations}.} Throughout the paper, $ \mathbb{K} $ will be an algebraically closed field of characteristic zero. All cdgas will be graded in nonpositive degrees and  over $\mathbb{K}.$ We always consider $\K$-schemes/stacks, and we assume that all classical $ \K $-schemes are \textit{locally of finite type}, and that all derived $ \K $-schemes/stacks $ {X} $ are  \textit{locally finitely presented.}
\vspace{0.1in}

\paragraph{\bfem{Our results.}} We now summarize our main results, leaving full explanations and
definitions to the main text. Our first result shows that the classical transversality lemma of contact geometry can be reformulated in the derived context. More specifically, using the geometry of $\Gm$-torsors
and symplectification in the sense of \cite[Section 4.2]{Berktav1}, \cite[Theorem
4.1]{Berktav2}, we prove the following result.

\begin{theorem}[Derived Analogue of Classical Transversality]  \label{thm:A}
Given an $n$-shifted symplectic derived Artin stack $(\widetilde{X},
  \omega_{\widetilde{X}})$
  equipped with a $\Gm$-action of weight 1,
  the stack quotient $[\widetilde{X} / \Gm]$ inherits an
  $n$-shifted contact structure (cf. Theorem \ref{thm:derived transversality}).
\end{theorem}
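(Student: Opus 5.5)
Write $\pi\colon \widetilde{X}\to X:=[\widetilde{X}/\Gm]$ for the quotient map. The plan is to reverse the symplectification construction of \cite[Section 4.2]{Berktav1}, \cite[Theorem 4.1]{Berktav2}. There, an $n$-shifted contact stack $(X,K)$ has a symplectification: the $\Gm$-torsor $\widetilde{X}\to X$ given by the total space of the line bundle $L=\TT_X/K$ minus the zero section. It carries an $n$-shifted symplectic form of weight $1$ for the fibrewise scaling action. So I would show that every weight-1 $\Gm$-equivariant $n$-shifted symplectic stack arises this way from a unique contact structure on the quotient.

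\textbf{Step 1: the Liouville form.} Let $E\colon \cO_{\widetilde{X}}\to \TT_{\widetilde{X}}$ be the fundamental (Euler) vector field of the action. Weight $1$ means $\mathcal{L}_E\omega_{\widetilde{X}}=\omega_{\widetilde{X}}$. I work in the graded mixed model of closed forms, where the $\Gm$-weight decomposition is available. Applying Cartan's formula $\mathcal{L}_E=d\iota_E+\iota_E d$ to the closed form gives
\[
\omega_{\widetilde{X}}=d(\iota_E\omega_{\widetilde{X}}).
\]
So $\lambda:=\iota_E\omega_{\widetilde{X}}$ is an $n$-shifted $1$-form. It has weight $1$, and $\iota_E\lambda=\omega(E,E)=0$ up to coherent homotopy.

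\textbf{Step 2: descent of the kernel.} The $\Gm$-equivariant exact triangle $\cO_{\widetilde{X}}\xrightarrow{E}\TT_{\widetilde{X}}\to \pi^*\TT_X$ identifies $\pi^*\TT_X$ with $\TT_{\widetilde{X}}/\langle E\rangle$. Because $\iota_E\lambda\simeq 0$, the map $\lambda\colon \TT_{\widetilde{X}}\to \cO_{\widetilde{X}}[n]$ factors through $\pi^*\TT_X$. Since $\lambda$ has weight $1$, it is equivariant for the twisted weight-1 action on the target. It therefore descends to a map $\theta\colon \TT_X\to L[n]$, where $L$ is the line bundle on $X$ associated to the weight-1 character. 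Define $K:=\mathrm{fib}(\theta)$. To see that $\theta$ is surjective (that is, $K\to\TT_X$ has cofibre exactly $L[n]$), it suffices to check that $\lambda\neq 0$ on $h^0$-fibres. This follows from nondegeneracy: $\omega$ is a quasi-isomorphism $\TT\to\LL[n]$, so $\lambda=\omega^\flat(E)$ vanishes nowhere because $E$ does (the action is free, as the quotient is a torsor).

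\textbf{Step 3: nondegeneracy of the curvature.} I must show that the induced pairing $d\theta|_K\colon K\otimes K\to L[n]$ is nondegenerate, i.e. $K\to K^\vee\otimes L[n]$ is an equivalence. Pulling back along $\pi$, I would identify $\pi^*K$ with the $\omega$-orthogonal complement structure. Concretely, $\pi^*K$ sits inside the quotient of $\TT_{\widetilde{X}}$ by $E$ and is isomorphic to the fibre of $\lambda$ modulo $\langle E\rangle$. There $\omega$ restricts to $d\lambda$. The symplectic equivalence $\TT_{\widetilde{X}}\simeq\LL_{\widetilde{X}}[n]$ sends $E$ to $\lambda$, so it induces an equivalence between $\mathrm{fib}(\lambda)/\langle E\rangle$ and its own shifted dual. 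This is a linear-algebra statement in the stable $\infty$-category of perfect complexes, proved by a $3\times 3$ diagram of exact triangles. Pullback along the faithfully flat $\pi$ then reflects the equivalence back to $X$.

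The main obstacle is Steps 1 and 2 at the homotopy-coherent level. $\omega$ is not a bare 2-form but a closed form with higher components, and equivariance is data rather than a property. I would handle this by first using \cite{Berktav1} to identify weight-1 equivariant closed forms on $\widetilde{X}$ with forms on the associated graded of the total space of $L$. I would then check that the homotopy $\iota_E\lambda\simeq 0$ is canonical, since it is supplied by the weight-1 decomposition itself, which forces the negative-weight components to vanish. The contact structure obtained is the one whose symplectification recovers $(\widetilde{X},\omega_{\widetilde{X}})$, consistent with Theorem \ref{thm:derived transversality}.
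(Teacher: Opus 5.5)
Your proposal is correct in its main line and follows essentially the same route as the paper's proof: contract $\omega_{\widetilde{X}}$ with the fundamental vector field, use the null-homotopy of $\omega(E,E)$ to factor through $\pi^*\TT_X$, descend by weight $1$ to an $\cL$-twisted $1$-form, take $\cK$ to be the fibre, and get non-degeneracy from the self-duality of $\mathrm{cofib}(\cO_{\widetilde{X}}\to E^\perp)$ checked after pulling back along $\pi$. The one thing to fix is the surjectivity check in your Step 2: it is unnecessary, since $\mathrm{cofib}(\mathrm{fib}(\theta)\to\TT_X)\simeq L[n]$ holds automatically in a stable $\infty$-category, and its justification is false, because $\widetilde{X}\to[\widetilde{X}/\Gm]$ is a torsor even when the action has fixed points (fibre scaling on $T^*\mathbb{A}^1$ has weight $1$ and fixes the zero section, where $E$ and $\lambda$ vanish) — though nothing in your Step 3 actually needs $E$ to be nowhere vanishing.
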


This theorem, in fact, formalizes \textit{derived symplectification}. Taking the
quotient of a homogenous symplectic space descends the symplectic
data to a contact structure, avoiding
a transverse hypersurface. In the derived setting, the Liouville vector field corresponds to the
fundamental vector field of a weight 1 $\Gm$-action, and the
contact manifold corresponds to the stack quotient. Classical coordinate
arguments are replaced by stable $\infty$-categorical operations.
This relies on generalized quotient groupoids,
shifted symplectic forms \cite[Definition 1.18]{PTVV},
and exact triangles of perfect complexes.

The second main result of this paper concerns the \textit{Legendrian intersection problem}, which has no direct classical counterpart. In brief, the theory above extends to Legendrian morphisms and gives rise to an affirmative answer to the Legendrian intersection problem in the context of derived geometry. 
The central idea is that derived contact geometry links Legendrian structures to Lagrangian structures via $\Gm$-equivariant lifts along the symplectification projection.
Lifting intersection data to the symplectic level yields the following result.

\begin{theorem}[Legendrian Intersection Theorem] \label{thm:B}
Intersection of two Legendrians $f_1 \colon L_1 \to X$ and $f_2 \colon L_2 \to X$ in an $n$-shifted contact derived Artin stack admits 
 an $(n-1)$-shifted  contact structure (cf. Theorem \ref{thm:Leg intersection}).
\end{theorem}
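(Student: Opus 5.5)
The plan is to reduce the Legendrian intersection to the Lagrangian intersection theorem of PTVV by passing to symplectifications, and then to descend back to the contact level using Theorem \ref{thm:A}.

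\emph{Step 1: lift to the symplectification.} Let $\pi\colon \widetilde{X}\to X$ be the symplectification of the $n$-shifted contact stack $X$, in the sense of \cite[Section 4.2]{Berktav1}, \cite[Theorem 4.1]{Berktav2}. It is a $\Gm$-torsor, and $\widetilde{X}$ carries an $n$-shifted symplectic form of weight $1$ for the $\Gm$-action. Set $\widetilde{L}_i := L_i\times_X \widetilde{X}$. Each $\widetilde{L}_i$ is a $\Gm$-torsor over $L_i$, and the induced map $\tilde f_i\colon \widetilde{L}_i\to \widetilde{X}$ is $\Gm$-equivariant. I will check that the Legendrian structure on $f_i$ lifts to a $\Gm$-equivariant Lagrangian structure on $\tilde f_i$.
\begin{itemize}
\item The isotropic homotopy $\tilde f_i^*\omega_{\widetilde X}\sim 0$ comes from pulling back the contact form, which is weight-homogeneous, so the nullhomotopy can be chosen equivariantly.
\item Nondegeneracy is étale-local and can be checked after base change. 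The relevant exact triangle of tangent complexes for $\widetilde L_i\to L_i$ has cofiber the weight-$1$ line $\cO$, generated by the fundamental vector field. This matches the triangle defining the contact line bundle, so the Legendrian nondegeneracy $\TT_{L_i}\simeq \LL_{f_i}[n-1]\otimes$(twist) becomes $\TT_{\tilde f_i}\simeq \LL_{\widetilde L_i}[n-1]$.
\end{itemize}

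\emph{Step 2: intersect upstairs.} By \cite{PTVV}, the derived intersection $\widetilde{Z}=\widetilde L_1\times_{\widetilde X}\widetilde L_2$ is $(n-1)$-shifted symplectic. Since all the maps are $\Gm$-equivariant, $\widetilde Z$ carries a $\Gm$-action. The PTVV form is built from $\omega_{\widetilde X}$ and the two nullhomotopies, all of weight $1$, so it again has weight $1$.

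\emph{Step 3: descend.} Theorem \ref{thm:A} then gives $[\widetilde Z/\Gm]$ an $(n-1)$-shifted contact structure. It remains to identify this quotient with $Z=L_1\times_X L_2$. This is where the descent cube enters. The cube is formed by the two squares defining $Z$ and $\widetilde Z$ together with the four torsor projections. Its vertical faces are pullbacks because base change of $\pi$ is compatible with fiber products, and pasting these faces shows $\widetilde Z\simeq Z\times_X\widetilde X$. So $\widetilde Z\to Z$ is a $\Gm$-torsor and $Z\simeq[\widetilde Z/\Gm]$.

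\emph{Main obstacle.} I expect the hardest part to be the coherent equivariance in Steps 1 and 2. Concretely, the isotropic nullhomotopies and the PTVV intersection form must be lifted to the $\infty$-category of $\Gm$-equivariant (weight-graded) closed forms, not merely checked pointwise to have weight $1$. My first approach would be to work throughout with $\Gm$-equivariant closed $2$-forms on quotient stacks, i.e.\ $\Acl$ of $[\,\cdot/\Gm]$ twisted by the weight-$1$ character. In that setting the PTVV construction is functorial, because it is a limit of a diagram of equivariant stacks, and this functoriality automatically carries the weight.
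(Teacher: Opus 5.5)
Your proposal follows the paper's proof essentially step for step. You lift along the symplectification to $\Gm$-equivariant Lagrangians $\widetilde L_i=L_i\times^h_X\widetilde X$, with isotropy coming from the pulled-back contact (Liouville) form and nondegeneracy from comparing the Atiyah triangle of $\widetilde L_i\to L_i$ with the Legendrian triangle. You then intersect via PTVV while carrying the weight-$1$ grading, and use the cube $\widetilde Z\simeq Z\times^h_X\widetilde X$ to identify $Z\simeq[\widetilde Z/\Gm]$ before invoking Theorem \ref{thm:A}.

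One correction: the Legendrian condition you feed into the nondegeneracy comparison must be the fiber sequence $\cO_{L_i}\to \LL_{L_i/X}\otimes f_i^*\cL[n-1]\to \TT_{L_i}$, not an equivalence $\TT_{L_i}\simeq \LL_{L_i/X}\otimes f_i^*\cL[n-1]$, which is off by a rank-one term. It is exactly this $\cO_{L_i}$ that matches the fundamental-vector-field line in $\cO_{\widetilde L_i}\to\TT_{\widetilde L_i}\to q_i^*\TT_{L_i}$ and makes $\TT_{\widetilde L_i}\simeq q_i^*\LL_{L_i/X}[n-1]\simeq\LL_{\widetilde L_i/\widetilde X}[n-1]$.
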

It is important to note that this lifting property translates intersection problems in contact moduli to equivariant intersection problems in symplectic moduli. This establishes the desired shifted contact structure on the derived intersection of two Legendrian stacks.
As an interesting consequence of Theorem \ref{thm:B}, we then obtain:

\begin{corollary}\label{cor:B}
The \bfem{derived discriminant locus}
$ \Delta\mathrm{loc}(f):= L_F \times_{J^1(L)}^h L_0$ of a regular function $f$ on a smooth $\K$-scheme $L$ admits a $(-1)$-shifted contact structure, where  $F:p \mapsto (f(p),d_{dR}f_p)$ defines a Legendrian embedding $L_F$ of $L$ into the 1-jet space $(J^1(L), \xi_{jet})$ and $L_0$ denotes the Legendrian given by the zero-section morphism
    (cf. Example \ref{example: ddislocus}).
\end{corollary}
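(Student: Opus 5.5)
The corollary should follow directly from Theorem \ref{thm:B}, specialised to $X = J^1(L)$ with $n=0$. The plan has three steps: identify $(J^1(L),\xi_{jet})$ as a $0$-shifted contact derived scheme, check that $L_F$ and $L_0$ are Legendrian morphisms into it in the derived sense, and then apply Theorem \ref{thm:B} to get a $(0-1)=(-1)$-shifted contact structure on $L_F\times^h_{J^1(L)} L_0 = \Delta\mathrm{loc}(f)$.

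\emph{Step 1: the contact structure on $J^1(L)$.} Write $J^1(L)=T^*L\times \mathbb{A}^1$ with coordinates $(q,p,z)$ and contact form $\alpha = dz - p\,dq$. This is a smooth $\K$-scheme, so its cotangent complex is the locally free sheaf $\Omega^1$. The distribution $\xi_{jet}=\ker\alpha$ gives the exact sequence $0\to \xi_{jet}\to \TT_{J^1(L)}\to \cL\to 0$ with $\cL = TJ^1(L)/\xi_{jet}$ a line bundle. The restriction of $d\alpha$ to $\xi_{jet}$ is nondegenerate and $\cL$-twisted. This is exactly the $0$-shifted contact datum of the paper. It can also be obtained from Theorem \ref{thm:A}: take $T^*(L\times\mathbb{A}^1)$ minus the zero section in the fibre direction, with the weight $1$ scaling action, and observe that its quotient contains $J^1(L)$ as an open substack.

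\emph{Step 2: both sections are Legendrian.} For $L_0$, the form $\alpha$ pulls back to $0$ along the zero section $q\mapsto (q,0,0)$. For $L_F$, the map $q\mapsto (q,d_{dR}f_q,f(q))$ gives $F^*\alpha = df - df = 0$. In both cases $L$ is smooth of dimension $\dim L$, which is half the rank of $\xi_{jet}$. The isotropic structure (the vanishing of $\alpha$ and $d\alpha$ on the pullback) is then nondegenerate. Concretely, the induced map $\TT_L\to \xi_{jet}|_L/\TT_L \simeq \LL_L\otimes\cL$ is a quasi-isomorphism, because both sides are vector bundles of the same rank and the map is injective by the classical Legendrian condition. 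This matches the derived Legendrian definition used in Theorem \ref{thm:B}.

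\emph{Step 3: apply Theorem \ref{thm:B}.} With $n=0$, the theorem immediately gives the $(-1)$-shifted contact structure on the homotopy fibre product. The classical truncation is the locus $\{df=0, f=0\}$, which justifies the name "discriminant locus".

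The main obstacle I expect is Step 2. Theorem \ref{thm:B} is phrased for Legendrians in its own derived sense: a homotopy trivialisation of the pulled-back contact form together with a nondegeneracy condition, possibly formulated via $\Gm$-equivariant Lagrangian lifts to the symplectification. So the classical Legendrian property has to be turned into that structure. My first attempt would be to lift explicitly. The lift of $L_F$ is the conic Lagrangian $\{(q,t\,d_{dR}f_q, -t, t f(q))\}$ in $T^*(L\times\mathbb{A}^1)$, which is the conormal-type lift of the graph of $f$. The lift of $L_0$ is the conormal to $L\times\{0\}$. Both are $\Gm$-stable smooth Lagrangians, so the classical and derived Lagrangian notions agree for them. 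Their images are Legendrian by the correspondence between Legendrians and $\Gm$-equivariant Lagrangian lifts described before Theorem \ref{thm:B}.
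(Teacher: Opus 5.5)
Your proposal is correct and follows the paper's route. The paper simply applies Theorem~\ref{thm:Leg intersection} with $n=0$ to the Legendrian morphisms $F$ and $0$ into $(J^1(L),\xi_{jet})$, taking the contact structure and the Legendrian property as classical input; your Steps 1--2 check these in more detail than the paper does, and there is one small slip in your optional explicit lift: the $z$-coordinate of the conormal lift of the graph of $f$ should be $f(q)$, not $tf(q)$, since as written the set is neither $\Gm$-stable nor Lagrangian.
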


Last but not least, we also provide several applications of Theorem \ref{thm:A} and Theorem \ref{thm:B} to moduli theory and give examples of derived stacks with a shifted contact structure.

\begin{corollary} \label{cor:C}
    \begin{enumerate} \itemsep=5pt
        \item Let $X$ be a smooth proper variety of dimension $d$ over $\K$ and $G$ be a reductive algebraic group. The derived moduli stack $P\mathrm{Higgs}_G(X)$ of projective Higgs bundles admits a canonical $(1-d)$-shifted contact structure (cf. Corollary \ref{cor:projective_higgs}). 
        
        Restricting to a smooth proper curve $C$, the derived intersection of projectivized irreducible components of the global nilpotent cone within the $0$-shifted ambient contact stack $P\mathrm{Higgs}_G(C)$ yields a strictly derived moduli space equipped with a canonical $(-1)$-shifted contact structure (cf. Corollary \ref{cor:nilpotent_intersection}).
        \item Let $LocSys_{\ell,n}^{fr}(X)$ denote the \textit{derived moduli stack of framed geometric $\ell$-adic local systems on the base extension $X_{\bar{\mathbb{F}}_q}$}. Then the derived contact mapping torus $$\left[(LocSys_{\ell,n}^{fr}(X) \times \mathbb{A}^1[2-2d]) / \mathbb{Z}\right]$$ inherits a $(2-2d)$-shifted contact structure (cf. Corollary \ref{cor: l-adic local sys}).        \item The derived stack quotient $\mathbb{P}(T^*[2]BG) := [T^*[2]BG^\circ / \Gm]$   carries a canonical 2-shifted contact structure (cf. Corollary \ref{cor: Lie 2-group}).
        \item For a closed subgroup $H \subset G$, denote by $\mathcal{N}^*_{BH/BG}[2]$ the 2-shifted conormal stack. Then  the morphism $\mathbb{P}(\mathcal{N}^*_{BH/BG}[2]) \to \mathbb{P}(T^*[2]BG)$ is Legendrian. Furthermore, the derived fiber product of two such Legendrians associated with subgroups $H_1$ and $H_2$ carries a canonical 1-shifted contact structure (cf. Corollary \ref{cor: intersection of prequan}).
    \end{enumerate}
\end{corollary}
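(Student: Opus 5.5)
The statement to prove is Corollary \ref{cor:C}, a list of four applications. Each item follows the same two-step pattern: exhibit a shifted symplectic derived stack with a weight 1 $\Gm$-action and apply Theorem \ref{thm:A}; then, for the intersection claims, exhibit $\Gm$-equivariant conic Lagrangians and apply Theorem \ref{thm:B}. The work is therefore to identify the right symplectic inputs from the literature and to check the weight condition in each case.

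For (1), I start from the PTVV shifted symplectic structure. Mapping stacks out of the $d$-dimensional Calabi--Yau-oriented object give that $\mathrm{Higgs}_G(X)=T^*[2-2d]\mathrm{Bun}_G(X)$, or its Dolbeault-stack description $\mathrm{Map}(X_{Dol},BG)$, is $(2-2d)$-shifted symplectic. I then take the $\Gm$-action scaling the Higgs field, restricted to the complement of the zero section. On a shifted cotangent stack the tautological $1$-form has weight $1$ under fiber scaling, so $\omega=d\lambda$ has weight $1$. Theorem \ref{thm:A} then gives a $(2-2d)$-shifted contact structure on $P\mathrm{Higgs}_G(X)$.

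This exposes a mismatch. Theorem \ref{thm:A} preserves the shift $n$, but the statement claims $(1-d)$. This matches $2-2d$ only for $d=1$, and I expect this to be the main obstacle. My first attempt is to reread the shift conventions in Theorem \ref{thm:derived transversality}, checking whether contact shift is normalized differently or whether the ambient symplectic stack is instead the $(1-d)$-shifted $\mathrm{Map}(X_{dR}\text{-twisted},\,\mathfrak g/G)$ description. I will adopt whichever normalization the main theorem uses and verify it reproduces $(1-d)$.

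For the curve case $d=1$, the ambient stack is $0$-shifted contact. The global nilpotent cone is Lagrangian in $\mathrm{Higgs}_G(C)$ by Laumon/Ginzburg, and it is conic, so it is $\Gm$-stable. Each irreducible component is therefore a conic Lagrangian. Away from the zero section it descends to a Legendrian in the projectivization; this is the converse direction of the equivariant-lift correspondence underlying Theorem \ref{thm:B}. Theorem \ref{thm:B} then gives the $(-1)$-shifted contact structure on the derived intersection. That it is \emph{strictly} derived follows from the failure of transversality: distinct components meet in excess dimension, so the tangent complex has nonzero cohomology in degree $1$.

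For (2), I use the $(2-2d)$-shifted symplectic structure on $\ell$-adic local systems coming from Poincaré duality, together with the Frobenius-induced $\mathbb{Z}$-action. The product with $\mathbb{A}^1[2-2d]$ supplies the extra contact direction. I then run the mapping-torus analogue of Theorem \ref{thm:A}, replacing $\Gm$-descent by étale $\mathbb{Z}$-descent, and check that the symplectic form is $\mathbb{Z}$-invariant up to the prescribed twist.

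For (3), I take $T^*[2]BG=[\mathfrak g^*/G]$, which is $2$-shifted symplectic with fiber scaling of weight $1$. Removing the zero section and applying Theorem \ref{thm:A} gives the $2$-shifted contact structure.

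For (4), shifted conormals $\mathcal N^*_{BH/BG}[2]\to T^*[2]BG$ are Lagrangian by Calaque's theorem, and they are conic. They therefore descend to Legendrians on the projectivizations, and Theorem \ref{thm:B} yields the $1$-shifted contact structure on their fiber product.
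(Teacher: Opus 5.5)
Your overall plan matches the paper's: Theorem~\ref{thm:A} for the $\Gm$-quotients, conic Lagrangians descending to Legendrians plus Theorem~\ref{thm:B} for the intersections, and contactization plus discrete descent for the $\ell$-adic case. Item (1), however, has a genuine gap. You never reach the $(1-d)$ shift, and the repair you propose cannot work, because Theorem~\ref{thm:A} turns an $n$-shifted symplectic stack into an $n$-shifted contact stack with no renormalization. The shift has to come from the ambient symplectic stack, and the missing idea is the paper's choice of that stack. By Serre duality, the fibre of $\LL_{\mathrm{Bun}_G(X)}$ at $P$ is $R\Gamma(X,\mathrm{ad}(P)^*\otimes\omega_X)[d-1]$. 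Hence $\omega_X$-valued Higgs fields sit in degree $0$ of the fibres of $T^*[1-d]\mathrm{Bun}_G(X)$. The paper takes this as the Higgs moduli stack and gets its $(1-d)$-shifted symplectic form from Theorem~\ref{thm:calaque}. It then uses that fibre scaling has weight $1$, and \emph{defines} $P\mathrm{Higgs}_G(X):=[T^*[1-d]\mathrm{Bun}_G(X)^\circ/\Gm]$. For $d=1$ this coincides with your route, but for $d\ge 2$ both of your candidates fail. First, $T^*[2-2d]\mathrm{Bun}_G(X)$ has degree-$0$ fibre $H^{1-d}(X,\mathrm{ad}(P)^*\otimes\omega_X)=0$, so it contains no nonzero Higgs fields. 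Second, $\mathrm{Map}(X_{Dol},BG)$ is not a shifted cotangent stack of $\mathrm{Bun}_G(X)$. Its PTVV form even has weight $d$ under Higgs scaling, since the orientation class lives in $H^d(X,\Omega^d_X)$, so Theorem~\ref{thm:A} does not apply to it.

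There are two further corrections. In (3), $[\mathfrak g^*/G]$ is $T^*[1]BG$, which is $1$-shifted, since $\LL_{BG}\simeq\mathfrak g^\vee[-1]$. The paper uses $T^*[2]BG\simeq[\mathfrak g^\vee[1]/G]$, whereas your model would only yield a $1$-shifted contact structure. (Even with the correct model there is an issue that neither you nor the paper addresses. The fibre $\mathfrak g^\vee[1]$ is the classifying stack of the vector group $\mathfrak g^\vee$, which has a single point. So the zero section is surjective and the naive open complement $T^*[2]BG^\circ$ is empty.) In (2), the paper does not use a mapping-torus version of Theorem~\ref{thm:A}. A $\mathbb Z$-quotient of a symplectic stack stays locally conformally symplectic, and the product with $\mathbb{A}^1[2-2d]$ only supplies a contact direction once you have a primitive. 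The paper's construction runs as follows:
\begin{itemize}
\item framing makes $\omega$ exact, $\omega=d_{dR}\theta$ with $F^*\theta=q^d\theta$;
\item it sets $\alpha=d_{dR}\tau+\theta$ on the product;
\item it lets the generator act by $(x,\tau)\mapsto(F(x),q^d\tau)$, so that $1^*\alpha=q^d\alpha$;
\item then $\cK$ is invariant, and $\alpha$ descends with values in the line bundle of the character $1\mapsto q^d$.
\end{itemize}
Your sketch omits both the primitive and the scaling of $\tau$, and a check on $\omega$ alone does not supply them. The nilpotent-cone and conormal parts agree with the paper. Note that the paper likewise asserts, rather than proves, that a conic Lagrangian off the zero section descends to a Legendrian.
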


\vspace{0.1in}
\paragraph{\bfem{Organization of the paper.}}  Sections \ref{sec:stable_cats} through
\ref{sec:structures} provide background material on stable
$\infty$-categories, derived commutative algebra, cotangent
complexes, and shifted symplectic and contact structures. Section
\ref{sec:classical} overviews the classical setup of transversality
in contact geometry. Section
\ref{sec:setup} formulates the derived setup and the first main theorem of this paper (cf. Theorem \ref{thm:derived transversality}). In
Section \ref{sec:proof}, we give the proof of Theorem \ref{thm:A} and
analyze the failure of classical transversality.  Section
\ref{sec:legendrian} establishes  Theorem \ref{thm:B} and Corollary \ref{cor:B}. Finally, in Section \ref{sec:moduli}, we elaborate on the content of Corollary \ref{cor:C} and give the proof of each statement.

\section{Recollection}

\subsection{Elements of Stable \texorpdfstring{$\infty$-}-Category Theory} \label{sec:stable_cats}

In derived geometry, sets and sheaves are
replaced by stable $\infty$-categories. Let $X$ be a topological space or a scheme.

\begin{definition}[Stable $\infty$-Categories]
  Following Lurie \cite[Section 1.1.1]{LurieHA}, a \bfem{stable
  $\infty$-category} is an
  $\infty$-category $\mathcal{C}$ possessing a zero object, as well
  as all finite limits and colimits, such that a
  commutative square in $\mathcal{C}$ is a pullback if and only if it
  is a pushout.
  
  Here, the suspension functor $\Sigma: \mathcal{C}
  \xrightarrow{\sim} \mathcal{C}$
  is an equivalence, allowing every morphism to be
  completed to a fiber sequence.
\end{definition}

\begin{definition}[The Derived $\infty$-Category]
  For a derived stack $X$, the $\infty$-category of quasi-coherent
  sheaves is denoted $L_{qcoh}(X)$ \cite[Section 1.1]{PTVV}. Its
  homotopy category is the quasi-coherent derived category
  $D_{qcoh}(X)$. $L_{qcoh}(X)$ is a stable, symmetric monoidal
  $\infty$-category possessing internal mapping objects
  $\mathbb{R}\underline{\mathcal{H}om}(E, F)$.
\end{definition}

This localization
resolves the classical failure of flat base change, permitting the
global computation of
derived functors.

\begin{definition}[Internal Mapping Objects and Mapping Spaces]
  In a stable, symmetric monoidal $\infty$-category $\mathcal{C}$
  equipped with a closed structure, the internal mapping object
  $\mathbb{R}\underline{\mathcal{H}om}(E, F)$ resides within
  $\mathcal{C}$. The \bfem{mapping space}
  $\mathrm{Map}_{\mathcal{C}}(E, F)$ is a Kan complex in the
  $\infty$-category of spaces $\mathcal{S}$. By the stable
  $\infty$-categorical Yoneda lemma, morphisms in $\mathcal{C}$ are
  identified with global sections of the internal mapping object.
\end{definition}

\begin{definition}[Homotopy Fibers and Fiber Sequences]
  Let $f: E \to F$ be a morphism in the stable $\infty$-category $L_{qcoh}(X)$.
  The \bfem{homotopy fiber} $\mathrm{fib}(f)$ is defined
  as the limit
  (homotopy pullback) of the diagram $E \xrightarrow{f} F \leftarrow
  0$. 
  
  This construction
  gives a \bfem{fiber sequence} $\mathrm{fib}(f) \to E
  \xrightarrow{f} F$, which
  serves as the derived replacement for a short exact sequence \cite[Section 1.1.2]{LurieHA}.
\end{definition}

In a stable $\infty$-category, fiber sequences and cofiber sequences
coincide up to a shift. This endows the category with a
triangulated structure, allowing morphisms to be completed to
an exact sequence. The homotopy cofiber provides
a method for computing homological obstructions.

\begin{remark}
  Fiber sequences replace short exact sequences in derived geometry.
  Classically, the kernel or cokernel of a map of vector bundles on a
  singular space might fail to be a vector bundle. The homotopy cofiber
  replaces the cokernel, and the homotopy fiber
  replaces the kernel. Unlike kernels and cokernels, homotopy
  fibers and cofibers preserve
  dualizability.
\end{remark}

\begin{lemma}[Pasting Law for Pullbacks]
  \label{lem:pullback_pasting}
  Let $\mathcal{C}$ be an $\infty$-category and suppose we are given a diagram $\Delta^1 \times \Delta^2 \to \mathcal{C}$ depicted as
  \begin{align*}
    \begin{tikzcd}[ampersand replacement=\&]
      A \arrow[r] \arrow[d] \& B \arrow[r] \arrow[d] \& C \arrow[d] \\
      X \arrow[r] \& Y \arrow[r] \& Z
    \end{tikzcd}
  \end{align*}
  If the right square is a pullback in $\mathcal{C}$, then the left square is a pullback if and only if the outer rectangle is a pullback.
\end{lemma}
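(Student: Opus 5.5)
The plan is to argue directly from the universal property of pullbacks in $\mathcal{C}$, using that the mapping spaces $\mathrm{Map}_{\mathcal{C}}(T,-)$ jointly detect limits. A square
\[
\begin{tikzcd}[ampersand replacement=\&]
P \arrow[r] \arrow[d] \& Q \arrow[d] \\
R \arrow[r] \& S
\end{tikzcd}
\]
is a pullback exactly when, for every object $T$, the induced map
\[
\mathrm{Map}(T,P) \longrightarrow \mathrm{Map}(T,Q)\times^h_{\mathrm{Map}(T,S)}\mathrm{Map}(T,R)
\]
is an equivalence of Kan complexes. This reduces the statement to the pasting law for homotopy pullbacks in the $\infty$-category of spaces $\mathcal{S}$. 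There it can be checked model-categorically: in Kan complexes with the Quillen model structure, homotopy pullbacks are computed by replacing one leg with a Kan fibration, and the strict pasting law for ordinary pullbacks then applies.

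First I apply $\mathrm{Map}(T,-)$ to the whole $\Delta^1\times\Delta^2$ diagram. This gives a diagram of spaces whose right square is a homotopy pullback for every $T$. Next I replace $\mathrm{Map}(T,C)\to\mathrm{Map}(T,Z)$ by a Kan fibration. Since the right square is a homotopy pullback, $\mathrm{Map}(T,B)$ is then equivalent over $\mathrm{Map}(T,Y)$ to the strict pullback. The induced map from $\mathrm{Map}(T,B)$ to $\mathrm{Map}(T,Y)$ can therefore be taken to be a fibration, being a pullback of one.

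At that point the strict identity
\[
(Y\times_Z C)\times_Y X \cong X\times_Z C
\]
at the level of simplicial sets shows the following. The comparison map for the left square is an equivalence if and only if the comparison map for the outer rectangle is. Both maps are compared with the same strict pullback, so this is two-out-of-three for weak equivalences. Since this holds for every $T$, the claim follows in $\mathcal{C}$.

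Alternatively, one can cite the purely $\infty$-categorical proof of Lurie (HTT, Lemma 4.4.2.1), which phrases the argument through right Kan extensions along $\Lambda^2_2\subset\Delta^1\times\Delta^1$. I would mention this as the standard reference.

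The main obstacle is the homotopy coherence bookkeeping. The identification of $\mathrm{Map}(T,B)$ with the strict pullback must be compatible with the maps from $\mathrm{Map}(T,A)$. I would handle this by working in a fibrant simplicial model (for instance the homotopy coherent nerve of a simplicial category presenting $\mathcal{C}$). There, choosing fibrant replacements functorially over the diagram makes all squares strictly commute, so the comparison maps are honest maps of simplicial sets.
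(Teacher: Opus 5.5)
Your argument is correct, but it takes a different route from the paper.

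\textbf{The paper's route.} The proof is a one-line duality. Lurie's HTT, Lemma 4.4.2.1, is the pasting law for \emph{pushouts}, proved with left Kan extensions. The authors apply it in $\mathcal{C}^{op}$, using $(\Delta^1\times\Delta^2)^{op}\simeq\Delta^1\times\Delta^2$; under this identification your right square becomes the left square of the reversed diagram. This also means your ``alternative'' citation is slightly misdescribed. Lemma 4.4.2.1 is not phrased via right Kan extensions along $\Lambda^2_2$, so to use it for pullbacks you need exactly this passage to $\mathcal{C}^{op}$.

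\textbf{Your route.} You use that the corepresentables $\mathrm{Map}(T,-)$ jointly detect limits, since the Yoneda embedding preserves and reflects them. This reduces the lemma to homotopy pullbacks of Kan complexes. There, right properness and the strict isomorphism $X\times_Y(Y\times_Z C')\cong X\times_Z C'$ finish the job. The gain is transparency: the lemma appears as the ordinary pasting law for pullbacks plus homotopy invariance.

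\textbf{What your route costs.} It needs more machinery than a citation. Strict commutativity of the squares does not come from functorial fibrant replacement inside a simplicial category presenting $\mathcal{C}$: a map into a homotopy coherent nerve is still only a homotopy coherent diagram. What you need is a rectification theorem applied to the image of your diagram under $\mathrm{Map}(T,-)$. For example, HTT, Proposition 4.2.4.4, identifies $\mathrm{Fun}(\Delta^1\times\Delta^2,\mathcal{S})$ with strict $[1]\times[2]$-diagrams of Kan complexes. HTT, Theorem 4.2.4.1, then matches $\infty$-categorical pullbacks in $\mathcal{S}$ with homotopy pullbacks in the model category. With these two citations in place, your proof is complete.
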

\begin{proof}
  This follows by applying the pasting law for pushouts \cite[Lemma 4.4.2.1]{LurieHTT} to the opposite $\infty$-category $\mathcal{C}^{op}$. A pullback square in $\mathcal{C}$ is a pushout square in $\mathcal{C}^{op}$. The diagram $\Delta^1 \times \Delta^2 \to \mathcal{C}$ corresponds to a diagram $(\Delta^1 \times \Delta^2)^{op} \simeq \Delta^1 \times \Delta^2 \to \mathcal{C}^{op}$. Applying \cite[Lemma 4.4.2.1]{LurieHTT} in $\mathcal{C}^{op}$ finishes the proof.
\end{proof}

\begin{lemma}[Exact Triangles of Fibers and the Pasting Law]
  \label{lem:pasting_law}
  Let $\mathcal{C}$ be a stable $\infty$-category. Given composable
  morphisms $X \xrightarrow{f} Y \xrightarrow{g} Z$ in $\mathcal{C}$,
  the homotopy fibers assemble into a fiber sequence
  \begin{align*}
    \mathrm{fib}(f) \to \mathrm{fib}(g \circ f) \to \mathrm{fib}(g).
  \end{align*}
\end{lemma}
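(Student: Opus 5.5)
The plan is to build a single $3\times 3$ grid of pullback squares and read off the desired sequence from it with the Pasting Law (Lemma \ref{lem:pullback_pasting}). Each homotopy fiber is, by definition, a pullback along the zero object, so the goal is to express $\mathrm{fib}(f)$ as the fiber of an induced map $\mathrm{fib}(g\circ f) \to \mathrm{fib}(g)$.

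First, I construct the maps. Form the square
\begin{align*}
  \begin{tikzcd}[ampersand replacement=\&]
    \mathrm{fib}(g\circ f) \arrow[r] \arrow[d] \& X \arrow[d, "f"] \\
    \mathrm{fib}(g) \arrow[r] \& Y
  \end{tikzcd}
\end{align*}
The left vertical map comes from the universal property of $\mathrm{fib}(g) = Y\times_Z 0$. The composite $\mathrm{fib}(g\circ f)\to X \to Y \to Z$ is null-homotopic through the canonical homotopy witnessing $g\circ f\circ(\text{incl}) \simeq 0$, so it factors through $\mathrm{fib}(g)$. The map $\mathrm{fib}(f)\to \mathrm{fib}(g\circ f)$ is obtained similarly, using that $g \circ f \circ (\mathrm{fib}(f)\to X) \simeq g\circ 0 \simeq 0$. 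The only care needed is that these maps are defined in the $\infty$-category, that is, together with their homotopies; this is automatic once everything is packaged as a diagram of pullbacks.

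Second, I show that the square above is a pullback. Paste it on the right with the pullback square defining $\mathrm{fib}(g)$, namely $\mathrm{fib}(g)\to 0$ over $Y \to Z$. The outer rectangle then has corners $\mathrm{fib}(g\circ f)$, $X$, $0$, $Z$, with maps $X\xrightarrow{g\circ f}Z$, and it is exactly the defining pullback of $\mathrm{fib}(g\circ f)$. By Lemma \ref{lem:pullback_pasting}, the left square is therefore a pullback. (One should arrange the grid so that the edges line up with the statement of that lemma; it is a transposed version, but the lemma is symmetric under transposing the diagram.)

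Third, I paste once more. Stack the pullback square $\mathrm{fib}(f) \to X$ over $0 \to Y$ with the square just obtained. Concretely, take the top row to be $\mathrm{fib}(f) \to \mathrm{fib}(g\circ f) \to X$ and the bottom row to be $0 \to \mathrm{fib}(g) \to Y$. The right square is a pullback by the second step. The outer rectangle is a pullback because it is the defining square of $\mathrm{fib}(f)$. Here one checks that the composite $0\to \mathrm{fib}(g)\to Y$ is the zero map, which holds because $0$ is initial. Lemma \ref{lem:pullback_pasting} then shows that the left square is a pullback. This identifies $\mathrm{fib}(f)$ with $\mathrm{fib}\bigl(\mathrm{fib}(g\circ f)\to \mathrm{fib}(g)\bigr)$, which is precisely the assertion that
\begin{align*}
  \mathrm{fib}(f) \to \mathrm{fib}(g\circ f) \to \mathrm{fib}(g)
\end{align*}
is a fiber sequence.

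I expect the main obstacle to be bookkeeping rather than mathematics. One must check that the maps produced by the universal properties agree with the edges of the pasted diagrams, so that the "outer rectangle" really is the defining pullback, including its homotopies. I would handle this by building the whole $3\times 3$ diagram at once as a right Kan extension from the zero corners, which makes all the coherences automatic. Stability is used only to call this a fiber sequence; the pullback argument itself holds in any pointed $\infty$-category with finite limits.
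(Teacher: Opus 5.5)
Your proposal is correct and follows essentially the same route as the paper: two applications of the pasting law, first to show that the square with top edge $\mathrm{fib}(g\circ f)\to\mathrm{fib}(g)$ over $f$ is a pullback, then to identify $\mathrm{fib}(f)$ with the fiber of $\mathrm{fib}(g\circ f)\to\mathrm{fib}(g)$. The differences are cosmetic. The paper defines $F:=\mathrm{fib}\bigl(\mathrm{fib}(g\circ f)\to\mathrm{fib}(g)\bigr)$ and concludes $F\simeq\mathrm{fib}(f)$ from the outer rectangle, while you start from $\mathrm{fib}(f)$ and deduce that the left square is a pullback. The paper also gets the first square from functoriality of limits along a map of cospans, which handles the coherences you defer to a Kan-extension packaging.
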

\begin{proof}
  Since $\mathcal{C}$ is a stable $\infty$-category, it admits all finite limits \cite[Proposition 1.1.3.4]{LurieHA}. 
  
  \noindent The composition $g \circ f$ determines a  morphism of cospans from $(X \xrightarrow{g \circ f} Z \leftarrow 0)$ to $$(Y \xrightarrow{g} Z \leftarrow 0).$$ 
  
  The functoriality of limits extends this to a diagram $\Delta^1 \times \Delta^2 \to \mathcal{C}$ containing two adjacent squares
  \begin{align*}
    \begin{tikzcd}[ampersand replacement=\&]
      \mathrm{fib}(g \circ f) \arrow[r] \arrow[d] \& \mathrm{fib}(g) \arrow[r] \arrow[d] \& 0 \arrow[d] \\
      X \arrow[r, "f"] \& Y \arrow[r, "g"] \& Z
    \end{tikzcd}
  \end{align*}
  The right square is a pullback by the definition of the homotopy fiber $\mathrm{fib}(g)$ \cite[Definition 1.1.1.6]{LurieHA}.
  The outer rectangle is a pullback by the definition of the homotopy fiber $\mathrm{fib}(g \circ f)$.
  
  Lemma \ref{lem:pullback_pasting} implies the left square
  \begin{align*}
    \begin{tikzcd}[ampersand replacement=\&]
      \mathrm{fib}(g \circ f) \arrow[r] \arrow[d] \& \mathrm{fib}(g) \arrow[d] \\
      X \arrow[r, "f"] \& Y
    \end{tikzcd}
  \end{align*}
  is a pullback square.
  We then extend the diagram upwards by defining $F$ as the homotopy fiber of the morphism $\mathrm{fib}(g \circ f) \to \mathrm{fib}(g)$, obtaining the diagram
  \begin{align*}
    \begin{tikzcd}[ampersand replacement=\&]
      F \arrow[r] \arrow[d] \& 0 \arrow[d] \\
      \mathrm{fib}(g \circ f) \arrow[r] \arrow[d] \& \mathrm{fib}(g) \arrow[d] \\
      X \arrow[r, "f"] \& Y
    \end{tikzcd}
  \end{align*}
  The top square is a pullback by the definition of $F$. The middle square is a pullback as demonstrated above. 
  
  Applying Lemma \ref{lem:pullback_pasting} again to this vertical arrangement, the outer rectangle
  \begin{align*}
    \begin{tikzcd}[ampersand replacement=\&]
      F \arrow[r] \arrow[d] \& 0 \arrow[d] \\
      X \arrow[r, "f"] \& Y
    \end{tikzcd}
  \end{align*}
  is a pullback square. By the definition of the homotopy fiber, we obtain an equivalence $F \simeq \mathrm{fib}(f)$.

  The definition of $F$ as the fiber of $\mathrm{fib}(g \circ f) \to \mathrm{fib}(g)$ provides the fiber sequence
  \begin{align*}
    F \to \mathrm{fib}(g \circ f) \to \mathrm{fib}(g).
  \end{align*}
  Since $F \simeq \mathrm{fib}(f)$, we obtain the fiber sequence
  \begin{align*}
    \mathrm{fib}(f) \to \mathrm{fib}(g \circ f) \to \mathrm{fib}(g).
  \end{align*}
\end{proof}

\begin{definition}[Perfect Complexes]
  A \bfem{perfect complex} on a derived stack $X$ is
  defined as a \textbf{dualizable object} in the symmetric monoidal
  $\infty$-category $L_{qcoh}(X)$ \cite[Section 1.1]{PTVV}. This
  means there exists an object $E^\vee :=
  \mathbb{R}\underline{\mathcal{H}om}(E, \mathcal{O}_X)$ such that
  the biduality map $E \to (E^\vee)^\vee$ is an equivalence.
\end{definition}

Perfect complexes represent the dualizable objects in the derived
category of quasi-coherent sheaves. They provide the geometric
finiteness condition required in shifted symplectic geometry
\cite[Section 1.1]{Calaque}. Without this
condition, the biduality map fails to be an equivalence, precluding the
definition of a non-degenerate symplectic form.

\subsection{Derived Commutative Algebras and Intersections} \label{sec:derived_comm}

\begin{definition}[Derived Tensor Product]
  Let $M, N \in L_{qcoh}(X)$. The \bfem{derived tensor product} $M
  \otimes^{\mathbb{L}}_{\cO_X} N$
  is the canonical symmetric monoidal structure on the stable
  $\infty$-category $L_{qcoh}(X)$ \cite[Section 4.4.2]{LurieHA}.
  Its cohomology group in degree $-i$ computes the higher derived functor
  $\mathrm{Tor}_i^{\cO_X}(M, N)$.
\end{definition}

The derived tensor product is the algebraic
operation for derived intersection theory. Computing intersections
via derived tensor products preserves geometric intersection
multiplicities \cite[Section 1.1]{PTVV}. This
extends the intersection theory of Serre by
embedding intersection multiplicities into the structure sheaf.

\begin{definition}[Commutative Differential Graded Algebra]
  A \bfem{commutative differential graded algebra} (cdga) over $\K$
  is a graded $\K$-algebra $A = \bigoplus_{i \le 0} A^i$ concentrated
  in non-positive cohomological degrees, equipped with a differential
  $d: A^i \to
  A^{i+1}$ satisfying $d^2 = 0$ and the graded Leibniz rule.
  The $\infty$-category of such algebras is denoted by
  $\mathrm{cdga}_\K^{\leq 0}$.
\end{definition}
These algebras generalize commutative rings by
encoding relations and syzygies into the differential
structure. The study of affine derived schemes is formally dual to
the study of these commutative differential graded algebras,
providing an algebraic framework for derived geometry.

\begin{definition}[Homotopy Pullback]
  Let $X \to Z$ and $Y \to Z$ be morphisms of derived spaces. The
  \bfem{homotopy pullback} or derived intersection $X
  \times_Z^{\mathbb{R}} Y$ is the limit completing the
  pullback diagram in the $\infty$-category of derived spaces. 
  
  Affinely, if
  $X = \Spec(A)$, $Y = \Spec(B)$, and $Z = \Spec(C)$, the derived
  intersection corresponds to the pushout $A
  \otimes_C^{\mathbb{L}} B$ computed within
  $\mathrm{cdga}_\K^{\leq 0}$
  \cite[Section 2.2.2]{HAG-II}.
\end{definition}

The universal property of the homotopy pullback ensures fiber
products of derived stacks exist. This framework resolves classical
pathologies where fiber
products fail to be smooth or exhibit unexpected dimensions. The
resulting space is a derived scheme
capturing the intersection geometry.

\begin{remark}
  Let $X$ and $Y$ be closed subschemes of a scheme $Z$. The classical intersection $X \times_Z Y$ has the structure sheaf $\mathcal{O}_X \otimes_{\mathcal{O}_Z} \mathcal{O}_Y$. If $X$ and $Y$ do not intersect transversely, the tensor product $\mathcal{O}_X \otimes_{\mathcal{O}_Z} \mathcal{O}_Y$ discards homological data. The resulting scheme lacks the necessary structure to determine the intersection multiplicity.
\end{remark}

\begin{observation}
  \label{prop:cdga_necessity}
  Let $\mathbb{K}$ be a commutative ring. The derived fiber product of the origin $\mathrm{Spec}(\mathbb{K})$ with itself over the affine line $\mathbb{A}^1_{\mathbb{K}}$ is represented by the commutative differential graded algebra $\mathbb{K}[\epsilon]/(\epsilon^2)$ with $\epsilon$ in degree $-1$. The homology of this algebra recovers the intersection multiplicity.
\end{observation}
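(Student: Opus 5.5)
The plan is to compute the derived fiber product directly with the Koszul resolution and then read off its homology. Write $\mathbb{A}^1_{\mathbb{K}} = \mathrm{Spec}\,\mathbb{K}[x]$ and view the origin as $\mathrm{Spec}\,\mathbb{K}[x]/(x)$. By the affine description of homotopy pullbacks given in the definition above, the derived fiber product $\mathrm{Spec}(\mathbb{K}) \times^{\mathbb{R}}_{\mathbb{A}^1} \mathrm{Spec}(\mathbb{K})$ corresponds to the derived tensor product $\mathbb{K} \otimes^{\mathbb{L}}_{\mathbb{K}[x]} \mathbb{K}$, computed in $\mathrm{cdga}_\K^{\leq 0}$. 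So the whole task is to produce an explicit model of this derived tensor product.

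To compute it, I will replace one factor by a cofibrant (quasi-free) cdga resolution over $\mathbb{K}[x]$. The natural choice is the Koszul algebra
\[ K := \mathbb{K}[x][\epsilon], \qquad |\epsilon| = -1, \qquad d\epsilon = x. \]
Here $\epsilon^2 = 0$ automatically, by graded commutativity, since $\epsilon$ has odd degree. This algebra is quasi-free over $\mathbb{K}[x]$. Its homology is computed from the two-term complex $\mathbb{K}[x]\epsilon \xrightarrow{\cdot x} \mathbb{K}[x]$. Because $x$ is a nonzerodivisor in $\mathbb{K}[x]$, this map is injective, so $H^{-1} = 0$ and $H^0 = \mathbb{K}[x]/(x) = \mathbb{K}$. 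Hence $K \to \mathbb{K}$ is a quasi-isomorphism of $\mathbb{K}[x]$-cdgas.

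With the resolution in hand, the derived tensor product is computed by the ordinary tensor product
\[ K \otimes_{\mathbb{K}[x]} \mathbb{K} = \mathbb{K}[\epsilon]/(\epsilon^2). \]
The differential on this quotient is $d\epsilon = x \mapsto 0$, so it vanishes. The resulting cdga has homology $\mathbb{K}$ in degree $0$ and $\mathbb{K}\epsilon$ in degree $-1$; equivalently, $\mathrm{Tor}_0 = \mathrm{Tor}_1 = \mathbb{K}$.

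For the multiplicity claim, I will interpret it via Serre's formula. The intersection multiplicity is $\sum_i (-1)^i \,\mathrm{length}\,\mathrm{Tor}_i$, and here this equals $1 - 1 = 0$. This agrees with the fact that the origin has self-intersection $0$ on the line. Alternatively, the homology detects the excess intersection: $\mathrm{Tor}_1$ is the normal bundle $N^\vee$ shifted into degree $-1$.

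The main subtlety is justifying that the ordinary tensor product with the Koszul resolution computes the $\infty$-categorical pushout in $\mathrm{cdga}_\K^{\leq 0}$. For this I would cite the model structure on cdgas in characteristic zero (HAG-II), in which quasi-free extensions are cofibrant, so pushouts along them are homotopy pushouts. Over a general commutative ring $\mathbb{K}$, the statement still holds at the level of the underlying module, since $K$ is flat over $\mathbb{K}[x]$; in that setting I would phrase the multiplicative structure with simplicial commutative rings if needed.
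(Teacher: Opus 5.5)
Your proposal is correct and follows essentially the same route as the paper: resolve $\mathbb{K}$ by the Koszul complex of $x$ over $\mathbb{K}[x]$, tensor down to obtain $\mathbb{K} \xrightarrow{0} \mathbb{K}$, and read off $\mathrm{Tor}_0 \cong \mathrm{Tor}_1 \cong \mathbb{K}$. Your version is more careful than the paper's in three ways: you put the cdga structure on the resolution itself, so $\mathbb{K}[\epsilon]/(\epsilon^2)$ with zero differential is computed rather than asserted; you justify that the strict tensor product computes the homotopy pushout; and you give the multiplicity claim a precise meaning via Serre's alternating sum $1-1=0$ (one small caveat: $\epsilon^2=0$ follows from graded commutativity only when $2$ is invertible or under the strict convention, which fits your remark about general $\mathbb{K}$).
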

\begin{proof}
  The classical fiber product $\mathrm{Spec}(\mathbb{K}) \times_{\mathrm{Spec}(\mathbb{K}[x])} \mathrm{Spec}(\mathbb{K})$ is computed by the pushout in the category of commutative rings
  \begin{align*}
    \mathbb{K} \otimes_{\mathbb{K}[x]} \mathbb{K} \cong \mathbb{K}.
  \end{align*}
  This pushout evaluates to $\mathbb{K}$ and discards the non-transverse intersection multiplicity.

  The derived intersection is governed by the derived tensor product $\mathbb{K} \otimes_{\mathbb{K}[x]}^{\mathbb{L}} \mathbb{K}$ in the $\infty$-category of commutative differential graded algebras. We compute this derived tensor product by replacing $\mathbb{K}$ with the Koszul resolution $K(\mathbb{K}[x], x)$ over $\mathbb{K}[x]$
  \begin{align*}
    0 \to \mathbb{K}[x] \xrightarrow{x} \mathbb{K}[x] \to 0
  \end{align*}
  where the terms are concentrated in cohomological degrees $-1$ and $0$. Applying the functor $- \otimes_{\mathbb{K}[x]} \mathbb{K}$ to the Koszul resolution we obtain the chain complex
  \begin{align*}
    0 \to \mathbb{K} \xrightarrow{0} \mathbb{K} \to 0.
  \end{align*}
  The homology of this complex computes the higher derived functors
  \begin{align*}
    \mathrm{Tor}_0^{\mathbb{K}[x]}(\mathbb{K}, \mathbb{K}) &\cong \mathbb{K} \\
    \mathrm{Tor}_1^{\mathbb{K}[x]}(\mathbb{K}, \mathbb{K}) &\cong \mathbb{K}.
  \end{align*}
  The derived intersection possesses a structure sheaf represented by the commutative differential graded algebra $\mathbb{K}[\epsilon]/(\epsilon^2)$ with generator $\epsilon$ in degree $-1$ and differential $d(\epsilon) = 0$. The $\infty$-categorical theory preserves the homological data of the non-transverse intersection \cite[Section 2]{ToenDAG}.
\end{proof}

\subsection{From Classical Differentials to Derived Stacks} \label{sec:diffs}

\begin{definition}[K\"{a}hler Differentials]
  Let $A$ be a commutative $R$-algebra. The module of
  \bfem{K\"{a}hler differentials} $\Omega^1_{A/R}$ is the
  $A$-module generated by symbols $da$ subject to the
  Leibniz rule. It is not homotopy invariant for singular spaces,
  making it inadequate for
  derived moduli theory \cite[Section 1.3.1]{CPTVV}.
\end{definition}

The module of K\"{a}hler differentials is locally free for
smooth schemes, but behaves poorly for singularities.
Its lack of exactness requires a derived replacement for moduli
problems \cite[Section 1.1]{Calaque}. The
failure of the cotangent functor to be left exact means the
classical sheaf cannot distinguish between singular
structures with identical tangent spaces.

In what follows, we present the underlying (higher) spaces of interest in this paper and outline their essential properties, along with several useful remarks.

\begin{definition}[Affine Derived Schemes]
  The $\infty$-category of \bfem{affine derived schemes} over $\K$,
  denoted $\mathbf{dAff}_\K$, is defined as the opposite
  $\infty$-category $(\mathrm{cdga}_\K^{\leq 0})^{\mathrm{op}}$. For
  any $A \in \mathrm{cdga}_\K^{\leq 0}$, the corresponding affine
  derived scheme is denoted $\Spec(A)$ \cite[Definition 2.2.2.4]{HAG-II}.
\end{definition}

\begin{definition}[Derived Stacks]
  A \bfem{derived stack} over $\K$ is a functor $X \colon
  \mathrm{cdga}_\K^{\leq 0} \to \mathcal{S}$ taking values in the
  $\infty$-category of spaces (Kan complexes), such that $X$
  satisfies exact hyperdescent with respect to the \'{e}tale
  topology. The $\infty$-category of derived stacks, denoted
  $\mathbf{dSt}_\K$, is constructed as the left Bousfield
  localization of the $\infty$-category of presheaves
  $\mathrm{Fun}(\mathrm{cdga}_\K^{\leq 0}, \mathcal{S})$ along
  \'{e}tale hypercovers \cite[Definition 2.2.2.14]{HAG-II}.
\end{definition}
In other words, a derived stack $ {X} $ is  a  functor  $$ {X}:  \mathrm{cdga}_{\K}^{\leq 0} \rightarrow  \mathcal{S}, \ \ A\mapsto X(A) \simeq \mathrm{Map}_{\mathbf{dSt}_\K}(\Spec(A), {X}),$$ satisfying a descent condition.
For more details, we refer to \cite{HAG-II}. 
It should then be noted that any affine derived $\mathbb{K}$-scheme $X$ can be corepresented by a cdga $A$, where $\Spec(A)$ is given as the functor \[ \Spec (A): B\in \mathrm{cdga}_\K^{\leq 0}  \longmapsto Hom_{\mathrm{cdga}_{\mathbb{K}}^{\leq 0}} (A,B) \in \mathcal{S}.\]

The concept of a derived stack may sometimes be too wild for some purposes because a typical $A$-point $p\in X(A)$ is represented by a general morphism $p: \Spec A \to X$ in $\mathbf{dSt}_\K$. When we ask for additional conditions on those morphisms, such as being a Zariski open embedding or being smooth (of some relative dimension), the resulting space may admit a tractable description in terms of affine covers, leading to the following special kind of derived stacks.

\begin{definition}[Derived Schemes]
  \label{def:derived_schemes}
  An object $X$ in the $\infty$-category of derived stacks $\mathbf{dSt}_\K$ is a \bfem{derived scheme} over $\K$ if it can be covered by Zariski open affine derived schemes $Y \subset X$. The $\infty$-category of derived schemes, denoted $\mathbf{dSch}_\K$, is the full subcategory of $\mathbf{dSt}_\K$ spanned by the derived schemes \cite[Definition 2.2]{Berktav2}.
\end{definition}

\begin{definition}[Smooth Morphisms of Derived Stacks]
  A morphism of derived stacks $f: X \to Y$ is \bfem{smooth} if it
  is locally of finite presentation and its relative cotangent
  complex $\LL_{X/Y}$ is a perfect complex of Tor-amplitude $[0, 0]$,
  meaning it acts homotopically as a locally free sheaf
  \cite[Section 2.2.5]{HAG-II}.
\end{definition}

\begin{definition}[Derived Artin Stacks]
  A \bfem{derived Artin stack} is a derived stack
  which is $m$-geometric for some integer $m$, for the \'{e}tale
  topology, and for the class of smooth maps \cite[Definition
  1.3.1]{PTVV}. 
\end{definition}
As mentioned before, all derived $ \K $-schemes/stacks $ {X} $ are assumed to be  \textit{locally finitely presented.} Let us also review key properties of cotangent and tangent complexes associated to these higher spaces.
\begin{definition}[Trivial Square-Zero Extensions and Derived Derivations]
  Let $X$ be a derived stack and $M \in L_{qcoh}(X)$. The
  \bfem{trivial square-zero extension} of $X$ by $M$, denoted
  $X[M]$, is defined as the relative derived spectrum
  $\mathbb{R}\mathbf{Spec}_X(\cO_X \oplus M)$, where $M$ acts as a
  square-zero ideal. The space of \bfem{derived derivations}
  $\mathrm{Der}(X, M)$ is the mapping space of sections
  $\mathrm{Map}_{\dArt_{\K // X}}(X, X[M])$ \cite[Section 1.3.1]{CPTVV}.
\end{definition}

\begin{definition}[Cotangent and Tangent Complexes]
  For a derived Artin stack $X$, assumed to be locally of finite
  presentation over $\K$, the \bfem{cotangent complex}
  $\LL_{X/\K} \in L_{qcoh}(X)$ is the complex defined by
  corepresenting derived derivations. It satisfies the adjunction
  $\mathrm{Map}_{L_{qcoh}(X)}(\LL_{X/\K}, M) \simeq
  \mathrm{Map}_{\dArt_{\K // X}}(X, X[M])$ \cite[Section
  1.3.1]{CPTVV}. Since $X$ is locally of finite presentation,
  $\LL_{X/\K}$ is a perfect complex and thus dualizable. Its dual is the
  \bfem{tangent complex} $\TT_{X/\K} \simeq
  \mathbb{R}\underline{\mathcal{H}om}(\LL_{X/\K}, \cO_X)$, which
  classifies derived vector fields.
\end{definition}

The cotangent complex provides a derived replacement for
K\"{a}hler differentials. By extending classical differentials
into a perfect complex, it captures the obstruction theory of
the space \cite[Section 1.1]{Calaque}. The higher cohomology groups
contain the obstruction classes for deformations, while
the degree zero part recovers the classical differentials.

\begin{observation}
  \label{prop:cotangent_necessity}
  Let $A = R/I$ be a quotient of a commutative ring $R$. The module of K\"{a}hler differentials $\Omega^1_A$ gives a right-exact conormal sequence. The derived cotangent complex $\mathbb{L}_A$ lifts this to an exact triangle in the derived category of $A$-modules.
\end{observation}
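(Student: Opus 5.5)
We work with the chain of ring maps $\K \to R \to A = R/I$, where $\K$ is the base (or $\mathbb{Z}$). The claim has two parts. First, the classical conormal sequence
\begin{align*}
  I/I^2 \xrightarrow{\ \delta\ } \Omega^1_R \otimes_R A \to \Omega^1_A \to 0
\end{align*}
is right exact but generally not left exact. Second, the cotangent complex upgrades it to a fiber sequence
\begin{align*}
  \LL_R \otimes^{\LL}_R A \to \LL_A \to \LL_{A/R}
\end{align*}
in the stable $\infty$-category of $A$-modules, with $H^0(\LL_{A/R}) \simeq 0$ and $H^{-1}(\LL_{A/R}) \simeq I/I^2$. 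The first part is classical commutative algebra and should come first. Surjectivity of $\Omega^1_R \otimes_R A \to \Omega^1_A$ holds because $R \to A$ is surjective. Exactness in the middle follows from the universal property of K\"ahler differentials. A derivation $D \colon A \to M$ is the same as a derivation $R \to M$ vanishing on $I$, and such a derivation automatically vanishes on $I^2$. Dualizing via $\mathrm{Hom}_A(-, M)$ then identifies $\Omega^1_A$ with the cokernel of $\delta \colon f \mapsto df \otimes 1$.

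For the derived statement, I would invoke the transitivity triangle for cotangent complexes along $\K \to R \to A$. It comes directly from the corepresenting property in the definition of the cotangent complex, since derivations compose compatibly with the relative square-zero extensions. The transitivity triangle is then a fiber sequence in $L_{qcoh}(\Spec A)$ because mapping spaces out of a cofiber sequence form a fiber sequence. Applying $H^0$ and using $H^0(\LL_B) = \Omega^1_B$ for discrete $B$, the long exact cohomology sequence ends with
\begin{align*}
  H^{-1}(\LL_{A/R}) \to \Omega^1_R \otimes_R A \to \Omega^1_A \to H^0(\LL_{A/R}) \to 0.
\end{align*}
This recovers the classical sequence once we identify the relative terms.

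The key computation is therefore $\tau_{\ge -1}$ of $\LL_{A/R}$ for a surjection $R \to A$. I would first get $H^0(\LL_{A/R}) = \Omega^1_{A/R} = 0$, since the map is surjective. I would then show $H^{-1}(\LL_{A/R}) \cong I/I^2$, either by building a simplicial (or cdga) resolution of $A$ over $R$ whose degree-$(-1)$ generators map onto generators of $I$, or by using the derivation adjunction with square-zero extensions by $M[1]$. The second route shows $\mathrm{Ext}^1(\LL_{A/R}, M)$ classifies square-zero extensions of $A$ by $M$ under $R$, and these are governed by $\mathrm{Hom}_A(I/I^2, M)$. One must also check that the connecting map becomes $\delta$.

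I expect this identification, together with the compatibility of the connecting map with $\delta$, to be the main obstacle; the rest is formal. When $I$ is generated by a regular sequence, the Koszul computation of Observation \ref{prop:cdga_necessity} gives $\LL_{A/R} \simeq I/I^2[1]$ exactly. In that case the triangle also shows the left exactness that fails classically: $I/I^2 \to \Omega^1_R \otimes A$ is injective precisely when $H^{-1}(\LL_A)$ vanishes.
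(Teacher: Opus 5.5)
Your proposal is correct, and it proves the observation by a more general route than the paper.

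The paper's proof runs as follows:
\begin{enumerate}
\item It states the classical right exactness without proof.
\item It writes down the transitivity triangle $\LL_R\otimes^{\LL}_R A\to\LL_A\to\LL_{A/R}$.
\item It then \emph{adds the hypothesis} that $A$ is a global complete intersection, so that $\LL_{A/R}\simeq (I/I^2)[1]$ exactly.
\item Rotating gives the literal triangle $I/I^2\to\LL_R\otimes^{\LL}_R A\to\LL_A$. Its long exact sequence ends in the conormal sequence, with $H^{-1}(\LL_A)$ on the left.
\end{enumerate}

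You proceed differently. You derive the classical part from the universal property of derivations and get the transitivity triangle from corepresentability. For an arbitrary surjection you then compute only the truncation, $H^0(\LL_{A/R})=0$ and $H^{-1}(\LL_{A/R})\cong I/I^2$. Either of your two routes works. The second one amounts to $\mathrm{Ext}^1(\LL_{A/R},M)\cong\mathrm{Exalcomm}_R(A,M)\cong\mathrm{Hom}_A(I/I^2,M)$, combined with $H^0(\LL_{A/R})=0$ and connectivity.

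What your approach buys:
\begin{itemize}
\item It proves the statement in the generality in which it is phrased; no complete-intersection hypothesis appears in the observation.
\item It explicitly treats the one non-formal point, namely that the connecting map $H^{-1}(\LL_{A/R})\to\Omega^1_R\otimes_R A$ is $\delta$. The paper takes this for granted.
\end{itemize}
What the paper's extra hypothesis buys is that $I/I^2$ itself, rather than $\LL_{A/R}[-1]$, appears as a term of the triangle.

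Two small corrections to your final paragraph.
\begin{itemize}
\item Your claim that $\delta$ is injective \emph{precisely} when $H^{-1}(\LL_A)=0$ needs $H^{-1}(\LL_R\otimes^{\LL}_R A)=0$, for example $R$ smooth over $\K$. In general the long exact sequence only says that $\delta$ is injective if and only if $H^{-1}(\LL_R\otimes^{\LL}_R A)\to H^{-1}(\LL_A)$ is surjective.
\item Observation~\ref{prop:cdga_necessity} computes a derived tensor product, not a cotangent complex. For $I=(f_1,\dots,f_r)$ with $(f_j)$ regular, the identification $\LL_{A/R}\simeq (I/I^2)[1]$ comes from applying $\Omega_{P/R}\otimes_P A$ to the Koszul model $P=R[\epsilon_1,\dots,\epsilon_r]$, $d\epsilon_j=f_j$. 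This gives $\bigoplus_j A\,d\epsilon_j\cong I/I^2$ in degree $-1$.
\end{itemize}
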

\begin{proof}
  The classical module of K\"{a}hler differentials fits into the conormal sequence
  \begin{align*}
    I/I^2 \to \Omega^1_R \otimes_R A \to \Omega^1_A \to 0.
  \end{align*}
  This sequence is right-exact. The left morphism maps a class $\overline{f} \in I/I^2$ to $df \otimes 1$. If $A$ is singular, this morphism is generally not injective. The kernel corresponds to the relations among the generators of $I$.

  The derived cotangent complex $\mathbb{L}_A$ extends this sequence. It is constructed by taking a simplicial resolution of $A$ by free commutative rings over $R$ and applying the K\"{a}hler differentials functor degree-wise. The ring homomorphism $R \to A$ induces the transitivity sequence, which is an exact triangle in the derived category of $A$-modules
  \begin{align*}
    \mathbb{L}_R \otimes_R^{\mathbb{L}} A \to \mathbb{L}_A \to \mathbb{L}_{A/R}.
  \end{align*}
  Assume $A$ is a global complete intersection. The relative cotangent complex $\mathbb{L}_{A/R}$ is quasi-isomorphic to the shifted conormal module $(I/I^2)[1]$ \cite[Section 1.1]{Calaque}. We apply the shift functor $[-1]$ to rotate the exact triangle and obtain
  \begin{align*}
    \mathbb{L}_{A/R}[-1] \to \mathbb{L}_R \otimes_R^{\mathbb{L}} A \to \mathbb{L}_A.
  \end{align*}
  Substituting the quasi-isomorphism $\mathbb{L}_{A/R}[-1] \simeq I/I^2$ results in the exact triangle
  \begin{align*}
    I/I^2 \to \mathbb{L}_R \otimes_R^{\mathbb{L}} A \to \mathbb{L}_A.
  \end{align*}
  Applying the homology functor to this triangle produces a long exact sequence. The zeroth homology modules are $H_0(\mathbb{L}_R \otimes_R^{\mathbb{L}} A) \cong \Omega^1_R \otimes_R A$ and $H_0(\mathbb{L}_A) \cong \Omega^1_A$. The long exact sequence terminates as
  \begin{align*}
    H_1(\mathbb{L}_A) \to I/I^2 \to \Omega^1_R \otimes_R A \to \Omega^1_A \to 0.
  \end{align*}
  This sequence recovers the classical conormal sequence and extends it to the left. The module $H_1(\mathbb{L}_A)$ measures the failure of injectivity. 
  
\end{proof}
We now give several definitions, which will be central for the rest of the paper.
\begin{definition}
  \label{def:derived_group_scheme}
  Let $S$ be a base derived scheme. A \bfem{derived group scheme}, or simply a \bfem{group scheme} over $S$ is a functor $G \colon \mathrm{cdga}_{\mathbb{K}}^{\leq 0} \to \mathcal{S}$ that factors through $\mathrm{Grp}(\mathcal{S})$, which denotes the $\infty$-category of group objects in $\mathcal{S}$, such that the underlying functor to $\mathcal{S}$ is representable by a derived scheme $X$ over $S$. 
  
  If $X$ is affine, then $G$ is a \bfem{derived affine group scheme}. If the structure map $X\to S$ is smooth then $G$ is a \bfem{smooth derived group scheme}. \cite[adapted from App. A.2]{Iwanari}.
\end{definition}

We will usually confuse the functor $G$ with the derived scheme $X$ in the above definition of a derived group scheme since by the derived Yoneda lemma the $\infty$-category of derived schemes embeds fully faithfully into the $\infty$-category of presheaves of spaces. 

\begin{definition}
  \label{def:multiplicative_group}
  Let $S = \Spec(\K)$. The \bfem{multiplicative group scheme} $\mathbb{G}_m$ over $S$ is the smooth derived affine group scheme represented by the commutative differential graded algebra $\K[t, t^{-1}]$ concentrated in degree zero. 
  
  The group scheme structure is determined functorially. For any $A \in \mathrm{cdga}_{\K}^{\leq 0}$, the space of points $\mathbb{G}_m(A)$ is the group $H^0(A)^\times$ of invertible elements in $H^0(A)$ \cite[adapted from Section 1.2]{Waterhouse}.
\end{definition}

\begin{definition}
  \label{def:quotient_groupoid}
  Let $G$ be a smooth group scheme over a base derived scheme $S$. Assume that $G$ acts on a derived scheme $X$ over $S$. The \bfem{quotient groupoid} $B_\bullet(X, G)$ is the simplicial object with degree $n$ terms $X \times_S G^n$. 
  
  The face maps $d_i \colon X \times_S G^n \to X \times_S G^{n-1}$ and degeneracy maps $s_i \colon X \times_S G^n \to X \times_S G^{n+1}$ are defined on points $(x, g_1, \dots, g_n)$ by
  \begin{align*}
    d_0(x, g_1, \dots, g_n) &= (g_1 \cdot x, g_2, \dots, g_n) \\
    d_i(x, g_1, \dots, g_n) &= (x, g_1, \dots, g_i g_{i+1}, \dots, g_n) \quad 0 < i < n \\
    d_n(x, g_1, \dots, g_n) &= (x, g_1, \dots, g_{n-1}) \\
    s_i(x, g_1, \dots, g_n) &= (x, g_1, \dots, g_i, e, g_{i+1}, \dots, g_n) \quad 0 \leq i \leq n
  \end{align*}
  where $g_1 \cdot x$ denotes the action of $G$ on $X$ and $e$ is the identity of $G$. The object $B_\bullet(X, G)$ is a smooth groupoid of derived schemes over $S$.
\end{definition}

\begin{definition}
  \label{def:derived_quotient_stack}\cite[Section 3.3]{ToenDAG}
  Let $G$ be a smooth group scheme over a base derived scheme $S$ acting on a derived scheme $X$ over $S$. The \bfem{quotient stack} $[X/G]$ is the derived Artin stack $|B_\bullet(X, G)|$,
   defined as the colimit of the simplicial object $B_\bullet(X, G)$:
    \begin{equation*}
        \big[{X}/G\big]= \mathrm{colim} \bigg(X\mathrel{\substack{\textstyle\leftarrow\\[-0.1ex]
                \textstyle\leftarrow \\[-0.1ex]}} X \times_S G
        \mathrel{\substack{\textstyle\leftarrow\\[-0.1ex]
                \textstyle\leftarrow \\[-0.1ex]
                \textstyle\leftarrow\\[-0.3ex]}} X \times_S G^2
        \mathrel{\substack{\textstyle\leftarrow\\[-0.1ex]
                \textstyle\leftarrow \\[-0.1ex]
                \textstyle\leftarrow \\[-0.1ex]
                \textstyle\leftarrow \\[-0.3ex]}} 
        \cdot\cdot\cdot \bigg),
    \end{equation*}  where the maps are given by the action and projection.
\end{definition} 
\begin{definition}
  \label{def:classifying_stack}
  Let $G$ be a smooth group scheme over a base derived scheme $S$. The \bfem{classifying stack} $BG$ is the quotient stack $[S/G]$ defined by the trivial action of $G$ on $S$, \cite[Section 3.3]{ToenDAG}.
\end{definition}

The stack quotient integrates with the theory of perfect complexes, replacing the classical requirement of global slices.

\begin{observation}
  \label{prop:stack_necessity}
  Let $\mathbb{K}$ be a field and let $S = \mathrm{Spec}(\mathbb{K})$. Let $\mathbb{G}_m = \mathrm{Spec}(\mathbb{K}[t, t^{-1}])$ be the multiplicative group scheme over $S$. The orbit space $\mathbb{A}^1_S / \mathbb{G}_m$ in the category of ringed spaces is non-Hausdorff. The quotient stack $[\mathbb{A}^1_S / \mathbb{G}_m]$ restricts to the classifying stack $B\mathbb{G}_m$ at the origin and to the scheme $S$ on the open orbit.
\end{observation}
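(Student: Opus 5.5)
The statement has three parts. The first is that the orbit space $\mathbb{A}^1_S/\mathbb{G}_m$ is non-Hausdorff. The second is that $[\mathbb{A}^1_S/\mathbb{G}_m]$ pulls back to $B\mathbb{G}_m$ over the origin. The third is that it pulls back to $S$ over the open orbit. I treat them in that order, working with the scaling action $t\cdot x = tx$, dual to the coaction $x \mapsto t\otimes x$ on $\mathbb{K}[x]$.

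\textbf{Step 1: the orbit space.} Since $\mathbb{K}$ is a field, the $\mathbb{G}_m$-orbits on points are exactly two. One is the closed point $\{0\}$. The other is the open orbit $\mathbb{A}^1\setminus\{0\} = \mathrm{Spec}(\mathbb{K}[x,x^{-1}])$, which in scheme terms is the generic point together with all nonzero closed points.

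The orbit space therefore has two points $\{[0],[\eta]\}$ with the quotient topology. A subset of the orbit space is open exactly when its preimage in $\mathbb{A}^1$ is open. The preimage of $\{[\eta]\}$ is $\mathbb{G}_m$, which is open. The preimage of $\{[0]\}$ is the origin, which is closed and not open.

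So the topology is $\{\emptyset, \{[\eta]\}, \text{all}\}$, the Sierpi\'nski space. The only open set containing $[0]$ is the whole space, and it also contains $[\eta]$. Hence the two points cannot be separated, and the space is non-Hausdorff.

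As a remark, the invariant ring is $\mathbb{K}[x]^{\mathbb{G}_m} = \mathbb{K}$, because no positive-weight polynomial is invariant. The ringed-space quotient thus carries only constant functions and loses all information about the orbit structure.

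\textbf{Step 2: restriction to the origin.} The inclusion $\{0\} \to \mathbb{A}^1$ is a closed $\mathbb{G}_m$-stable subscheme. Quotient stacks commute with base change along equivariant morphisms, since the simplicial presentation of Definition~\ref{def:derived_quotient_stack} is formed levelwise. It follows that $[\{0\}/\mathbb{G}_m] \to [\mathbb{A}^1/\mathbb{G}_m]$ is a closed substack.

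The action of $\mathbb{G}_m$ on the point $\{0\}\cong S$ is trivial. By Definition~\ref{def:classifying_stack}, this substack is $[S/\mathbb{G}_m] = B\mathbb{G}_m$.

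To make the identification precise, I would check levelwise that $B_\bullet(\{0\},\mathbb{G}_m) \simeq B_\bullet(\mathbb{A}^1,\mathbb{G}_m)\times_{\mathbb{A}^1}\{0\}$. Here the fiber product is taken along $d_n$, and $\mathbb{G}_m$-stability of $\{0\}$ ensures the face map $d_0$ is compatible. Geometric realization commutes with pullback in the $\infty$-topos $\mathbf{dSt}_\K$, and this gives the identification.

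\textbf{Step 3: restriction to the open orbit.} The open subscheme $U=\mathbb{G}_m\subset\mathbb{A}^1$ is $\mathbb{G}_m$-stable. The action on $U$ is the left translation action of $\mathbb{G}_m$ on itself, which is free and transitive.

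The shear map $\mathbb{G}_m\times\mathbb{G}_m\to\mathbb{G}_m\times\mathbb{G}_m$, $(x,g)\mapsto(gx,x)$, is an isomorphism. Consequently $B_\bullet(U,\mathbb{G}_m)$ is the Čech nerve of the map $U\to S$.

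The map $U\to S$ is smooth and surjective, and hence an effective epimorphism in the étale (indeed smooth) topology. The realization of its Čech nerve is therefore $S$, giving $[U/\mathbb{G}_m]\simeq S$.

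\textbf{Main obstacle.} The only delicate point is identifying the realization of the Čech nerve with $S$. Effective epimorphisms are defined étale-locally, but $U\to S$ is only smooth. I would handle this by noting that $U\to S$ admits a section, namely the point $1$. Any map with a section is an effective epimorphism, and this gives $|\check{C}(U/S)|\simeq S$ directly.
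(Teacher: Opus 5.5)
Your proposal is correct and follows the same route as the paper. The two orbits give the Sierpi\'nski quotient topology, and you restrict $B_\bullet(\mathbb{A}^1_S,\mathbb{G}_m)$ to the invariant subschemes: at the origin this gives $B_\bullet(S,\mathbb{G}_m)$, whose realization is $B\mathbb{G}_m$, and on $U$ the free transitive action realizes to $S$. You also justify two points the paper only asserts when it calls the restricted groupoid an equivalence relation: the shear isomorphism identifies $B_\bullet(U,\mathbb{G}_m)$ with the \v{C}ech nerve of $U\to S$, and the section at $1$ makes $U\to S$ an effective epimorphism.
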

\begin{proof}
  Let the smooth group scheme $\mathbb{G}_m$ over $S$ act on the affine line $\mathbb{A}^1_S$ by multiplication. The action admits two orbits. The first orbit is the origin, which we identify with $S$. The second orbit is the open complement $U = \mathbb{A}^1_S \setminus S$. The $\mathbb{G}_m$-invariant open sets in $\mathbb{A}^1_S$ are the empty set, $U$, and $\mathbb{A}^1_S$. These invariant sets determine the quotient topology on the orbit space. The only open set containing the origin is the entire space $\mathbb{A}^1_S$, which necessarily contains the open orbit $U$. The origin and the open orbit cannot be separated by disjoint open sets. The orbit space is then non-Hausdorff.

  The quotient stack $[\mathbb{A}^1_S / \mathbb{G}_m]$ is the derived Artin stack $|B_\bullet(\mathbb{A}^1_S, \mathbb{G}_m)|$. We restrict this quotient stack to the invariant subschemes.

  Over the open orbit $U$, the action is simply transitive. The restricted quotient groupoid $B_\bullet(U, \mathbb{G}_m)$ is an equivalence relation. The quotient stack $|B_\bullet(U, \mathbb{G}_m)|$ is the scheme-theoretic quotient $S$.

  Over the origin, the inclusion $S \hookrightarrow \mathbb{A}^1_S$ is $\mathbb{G}_m$-equivariant with respect to the trivial action on $S$. The restriction of the quotient groupoid $B_\bullet(\mathbb{A}^1_S, \mathbb{G}_m)$ to the origin yields the smooth groupoid of derived schemes $B_\bullet(S, \mathbb{G}_m)$. The quotient stack $|B_\bullet(S, \mathbb{G}_m)|$ is the classifying stack $B\mathbb{G}_m$.   
  
\end{proof}
\vspace{1in}

\begin{definition}
  \label{def:equivariant_sheaves}
  Let $G$ be a smooth group scheme over a base derived scheme $S$ acting on a derived scheme $X$ over $S$. The stable $\infty$-category of \bfem{$G$-equivariant quasi-coherent sheaves} on $X$, denoted $L_{qcoh}^G(X)$, is defined as the limit of the cosimplicial $\infty$-category $L_{qcoh}(B_\bullet(X, G))$. This cosimplicial diagram is obtained by applying the contravariant functor $L_{qcoh}(-)$ to the quotient groupoid $B_\bullet(X, G)$.
\end{definition}
That is, $L_{qcoh}^G(X)$ is defined as the limit of the cosimplicial object $L_{qcoh}(B_\bullet(X, G))$
\begin{equation*}
    L_{qcoh}^G(X) = \lim \bigg(L_{qcoh}(X)\mathrel{\substack{\textstyle\rightarrow\\[-0.1ex]
            \textstyle\rightarrow \\[-0.1ex]}} L_{qcoh}(X \times_S G)
    \mathrel{\substack{\textstyle\rightarrow\\[-0.1ex]
            \textstyle\rightarrow \\[-0.1ex]
            \textstyle\rightarrow\\[-0.3ex]}} L_{qcoh}(X \times_S G^2)
    \mathrel{\substack{\textstyle\rightarrow\\[-0.1ex]
            \textstyle\rightarrow \\[-0.1ex]
            \textstyle\rightarrow \\[-0.1ex]
            \textstyle\rightarrow \\[-0.3ex]}} 
    \cdot\cdot\cdot \bigg)
\end{equation*}
where the maps are the pullbacks induced by the action and projection morphisms.

\begin{definition}
  \label{def:principal_torsor}
  Let $G$ be a derived group scheme over a base derived scheme $S$. A morphism of derived stacks $p \colon \widetilde{X} \to X$ equipped with a $G$-action on $\widetilde{X}$ over $X$ is a \bfem{principal $G$-torsor} if the natural morphism $[\widetilde{X}/G] \to X$ is an isomorphism in the homotopy category of derived stacks and the square
  \begin{equation*}
    \begin{tikzcd}
      \widetilde{X} \arrow[r] \arrow[d, "p"'] & S \arrow[d] \\
      X \arrow[r] & BG
    \end{tikzcd}
  \end{equation*}
  is homotopy cartesian, where $S \to BG$ is the canonical morphism from the base scheme to the classifying stack \cite[Definition 1.3.5.2]{TV2}.
\end{definition}

Note that the homotopy cartesian square condition is equivalent to the requirement that the shear map $G \times_S \widetilde{X} \to \widetilde{X} \times_X \widetilde{X}$ defined by $(g, x) \mapsto (gx, x)$ is an equivalence.

When $X\in \mathbf{dAff}_\K$,  say $X=\Spec A$ with $A$ a cdga,  a \bfem{$G$-torsor over $A$} is defined to be a $G$-module $F \in G\text{-Mod}$, together with a fibration of $G$-modules
\(
F \longrightarrow \Spec A,
\)
such that there exists an étale covering $A \longrightarrow B$ with the property that the object
\[
F \times_{\Spec A} \Spec B \longrightarrow \Spec B
\]
is equivalent over $\Spec B$ to
\(
G \times \Spec B \longrightarrow \Spec B
\) (where $G$ acts on itself by left translations). For details, we refer to \cite[Section 3.4]{TV2}.

\subsection{Derived Symplectic, Lagrangian, and Contact Structures}
\label{sec:structures}

\begin{definition}[Derived de Rham Complex and Closed Forms]
  For $A \in \mathrm{cdga}_\K^{\leq 0}$, the \bfem{derived de Rham
  algebra} $\DR(A/\K)$ is the graded mixed complex evaluated as
  $\mathrm{Sym}_A(\LL_{A/\K}[1])$ incorporating the standard Hodge
  filtration \cite[Section 1.4.1]{CPTVV}.
  The \bfem{space of (closed) $p$-forms of degree $n$} on an affine derived
  scheme $\Spec A$ is defined as
  \begin{align*}
\mathcal{A}^{p}_\K(A,n)
    &:= \mathrm{Map}_{\mathrm{dg}_k}(\K, (\wedge^p\LL_{A/\K})[n]), \\
\mathcal{A}^{p,cl}_\K(A,n)
    &:= \mathrm{Map}_{\epsilon-\mathrm{dg}_k}(\K(p)[-n-p],\DR(A/\K))
  \end{align*}
  in the $\infty$-category of mixed graded $\K$-modules \cite[Definition
  1.8]{PTVV}. For a derived Artin stack $F$, these are globalized via
  mapping spaces, see \cite[Definition 1.12]{PTVV}, such that
  \begin{equation*}
    \mathcal{A}^{p}_\K(F,n) :=
    \mathrm{Map}_{\dArt_\K}(F, \mathcal{A}^{p}_\K(-, n)), \quad \mathcal{A}^{p,cl}_\K(F,n) :=
    \mathrm{Map}_{\dArt_\K}(F, \mathcal{A}^{p,cl}_\K(-, n)).
  \end{equation*}  
\end{definition}
More specifically, $\omega \in \mathcal{A}^p(F,n)$ is a $d$-closed element of $DR(F)$ of weight $p$ and cohomological degree $n$. Similarly, $\omega \in \mathcal{A}^{p,cl}(F,n)$ is a $(d+d_{dR})$-closed element  of $DR(F)$ given as a formal series $\omega=\omega_p+\omega_{p+1} + \cdots$ such that $\omega_q$ has weight $q$ and degree $n-q+p$. For details, we refer to \cite{PTVV}.

Note also that the space $\mathcal{A}^{p,cl}(F,n)$  is usually more complicated than the space {$ \mathcal{A}^p(F,n)$  even if we make nice assumptions on $F$. Thanks to \cite[Prop.  1.14]{PTVV}, we have the following identification for the space $ \mathcal{A}^p(F,n)$, making it more tractable: 
\begin{equation*}
    \mathcal{A}^p(F,n)\simeq {\rm Map}_{L_{qcoh}(F)}(\mathcal{O}_F, \wedge^p\mathbb{L}_F[n]). 
\end{equation*}
 Thus, any 2-form of degree $n$  induces a morphism $\mathcal{O}_F \rightarrow  \wedge^2\mathbb{L}_F[n]$ in $D_{qcoh}(F)$, and hence, by duality, a morphism $\mathbb{T}_F \wedge \mathbb{T}_F \rightarrow \mathcal{O}_F[n]$. By adjunction, this gives the induced morphism $\Theta_{\omega} : \mathbb{T}_F \rightarrow \mathbb{L}_F [n]$, leading to the following definition:

\begin{definition}[Shifted Symplectic Structures]
  Following Pantev et al. \cite[Definition 1.18]{PTVV}, an
  \bfem{$n$-shifted symplectic structure} on a derived Artin stack
  $F$ is a point $\omega$ in the space $\mathbb{S}ymp(F, n)$. The
  space $\mathbb{S}ymp(F, n)$ is defined as the homotopy pullback
  \begin{equation*}
    \mathbb{S}ymp(F,n) := \mathcal{A}^{2,cl}(F,n)
    \times_{\mathcal{A}^2(F,n)}^h \mathcal{A}^2(F,n)^{nd}
  \end{equation*}
  where $\mathcal{A}^2(F,n)^{nd}$ is the subspace of non-degenerate
  2-forms, i.e., those whose underlying morphism $\Theta_{\omega}:
  \TT_F \to \LL_F[n]$ is an isomorphism in $D_{qcoh}(F)$.
\end{definition}

The shift $n$ compensates for the homological
degrees inherent in the tangent and cotangent complexes. This shift
distinguishes derived symplectic
geometry from its classical counterpart, admitting symplectic
structures on odd-dimensional moduli spaces \cite[Section 1.2]{PTVV}. 
Let us mention here two important existence results in the context of \cite{PTVV}.

\begin{theorem}[Calaque's Shifted Cotangent Stack Theorem] \label{thm:calaque}
  Let $X$ be a derived Artin stack. The $n$-shifted cotangent stack,
  defined via the relative derived spectrum as
  \begin{align*}
    T^*[n]X
    \simeq\mathbb{R}\mathbf{Spec}_X(\mathbb{L}\mathrm{Sym}_{\cO_X}(\TT_X[-n])),
  \end{align*}
  carries a canonical $n$-shifted symplectic structure \cite[Theorem
  2.2]{Calaque}.
\end{theorem}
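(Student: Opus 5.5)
The plan follows Calaque's argument: build an explicit closed $2$-form on $T^*[n]X$ as the de Rham differential of a tautological $1$-form, then check non-degeneracy using the exact triangle of tangent complexes for the projection $\pi \colon T^*[n]X \to X$.

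\textbf{Step 1: the Liouville form.} By construction, $\pi$ is the relative derived spectrum of $\mathbb{L}\mathrm{Sym}_{\cO_X}(\TT_X[-n])$. The degree-one generators $\TT_X[-n] \to \pi_*\cO_{T^*[n]X}$ therefore give, by adjunction, a tautological section of $\pi^*\LL_X[n]$. Composing with the canonical morphism $\pi^*\LL_X \to \LL_{T^*[n]X}$ produces a $1$-form $\lambda \in \mathcal{A}^1(T^*[n]X, n)$, using the identification $\mathcal{A}^p(F,n)\simeq \mathrm{Map}(\cO_F,\wedge^p\LL_F[n])$ of \cite[Prop. 1.14]{PTVV}. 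The de Rham differential maps $1$-forms into closed $2$-forms of the same degree. Set $\omega := d_{dR}\lambda \in \mathcal{A}^{2,cl}(T^*[n]X,n)$; it is exact, hence closed, and no higher correction terms $\omega_{q}$ are needed.

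\textbf{Step 2: non-degeneracy (the main obstacle).} The cotangent complex of a relative linear stack $\mathbb{R}\mathbf{Spec}_X(\mathbb{L}\mathrm{Sym}(E))$, with $E=\TT_X[-n]$ perfect, satisfies $\LL_{T^*[n]X/X}\simeq \pi^*E$. So $\TT_{T^*[n]X/X}\simeq \pi^*\LL_X[n]$, and the transitivity triangle gives a fiber sequence
\begin{align*}
\pi^*\LL_X[n] \to \TT_{T^*[n]X} \to \pi^*\TT_X.
\end{align*}
Applying $(-)^\vee[n]$ yields the dual sequence $\pi^*\LL_X[n]\to \LL_{T^*[n]X}[n]\to \pi^*\TT_X$. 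I would show that $\Theta_\omega$ induces a morphism between these two fiber sequences whose outer components are the identity of $\pi^*\LL_X[n]$ and of $\pi^*\TT_X$, up to sign. Lemma \ref{lem:pasting_law} and the stability of $L_{qcoh}$ (five lemma for fiber sequences) then show that $\Theta_\omega$ is an equivalence.

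Identifying these outer components is the delicate part, because it requires computing $d_{dR}\lambda$ in a form where the vertical and horizontal pieces can be seen. I would reduce to the affine case $X=\Spec A$ with $A$ quasi-free, via smooth descent for forms and cotangent complexes (which is how \cite[Definition 1.12]{PTVV} globalizes). For a derived Artin stack this descent needs a smooth atlas together with the compatible relative statement, as in Calaque's treatment. On a quasi-free $A$, pick local generators $x_i$ with dual fiber coordinates $p_i$ of the appropriate shifted degrees. Then $\lambda=\sum p_i\,dx_i$ and $\omega=\sum dp_i\,dx_i$, with the appropriate Koszul signs. This is visibly the standard pairing between the vertical directions $\pi^*\LL_X[n]$ and the horizontal directions $\pi^*\TT_X$, which gives the required identity components.

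Finally, the pair $(\omega,\Theta_\omega)$ defines a point of the homotopy pullback $\mathbb{S}ymp(T^*[n]X,n)$. Canonicity follows because the construction of $\lambda$ is functorial and uses no choices.
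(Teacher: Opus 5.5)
The paper does not prove this statement; it quotes it from \cite[Theorem 2.2]{Calaque}, so your outline has to be judged on its own. Most of the architecture is the standard one and is sound:
\begin{itemize}
\item the tautological section of $\pi^*\LL_X[n]$, pushed into $\LL_{T^*[n]X}[n]$;
\item the exact closed form $\omega=d_{dR}\lambda$;
\item the fibre sequence $\pi^*\LL_X[n]\to\TT_{T^*[n]X}\to\pi^*\TT_X$ together with its $(-)^\vee[n]$-dual;
\item two-out-of-three, once $\Theta_\omega$ is a map of fibre sequences with invertible outer components.
\end{itemize}

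The gap is exactly at the step you flag as delicate: reducing to quasi-free coordinates in which $\lambda=\sum p_i\,dx_i$. Non-degeneracy is a property of $\omega$ on $T^*[n]X$. Base-changing a smooth atlas $u\colon U=\Spec A\to X$ gives $T^*[n]X\times_X U\simeq\mathbb{R}\mathbf{Spec}_U(\mathbb{L}\mathrm{Sym}(u^*\TT_X[-n]))$, and this is not $T^*[n]U$. The complex that enters is $u^*\LL_X\simeq\mathrm{fib}(\LL_U\to\LL_{U/X})$, not $\LL_U$, so fibre coordinates $p_i$ dual to the $dx_i$ miss the stacky directions. Already for $X=BG$ and $U=\Spec\K$, $T^*[n]U$ is a point, while $T^*[n]BG\times_{BG}\Spec\K\simeq\mathfrak g^\vee[n-1]$. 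The pullback of $\TT_{T^*[n]BG}$ there is an extension of $\mathfrak g[1]$ by $\mathfrak g^\vee[n-1]$, and its $\mathfrak g[1]$ part comes from no tangent direction of $U$. A coordinate computation on $U$ therefore says nothing about the pulled-back $\Theta_\omega$. As written, your argument covers at best derived Deligne--Mumford stacks, where \'etale atlases satisfy $u^*\LL_X\simeq\LL_U$. The ``compatible relative statement'' you defer to Calaque is precisely the content of the theorem for Artin stacks.

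To close the gap you need either an argument that is global on $T^*[n]X$, or local models that carry stacky generators. One global route:
\begin{itemize}
\item The composite $\pi^*\LL_X[n]\to\LL_{T^*[n]X}[n]\to\LL_{T^*[n]X/X}[n]$ is canonically null. Hence $\lambda$, and so $\omega$, vanishes in $\DR(T^*[n]X/X)$, which supplies the null-homotopy making $\Theta_\omega$ a map of fibre sequences.
\item It then remains to identify the induced endomorphism of $\pi^*\LL_X[n]$ with the identity; the other outer map is its dual by skew-symmetry. One way is the additivity $a^*\lambda\simeq\mathrm{pr}_1^*\lambda+\mathrm{pr}_2^*\lambda$ for fibrewise addition $a\colon T^*[n]X\times_X T^*[n]X\to T^*[n]X$.
\item Alternatively, use the weight-one $\Gm$-equivariance of $\omega$. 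The non-degeneracy locus is open and $\Gm$-stable, so it suffices to check along the zero section. There the weights force $\Theta_\omega$ to be block off-diagonal, leaving only the horizontal--vertical block to identify.
\end{itemize}
The local-model route is to use stacky cdga charts, in which $\LL_X$ has free generators in positive degrees, so that $\sum p_i\,dx_i$ includes the corresponding coordinates.

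A smaller point: when $\TT_X[-n]$ is not connective (e.g.\ $n>0$), $T^*[n]X$ is not relatively affine even for affine $X$. So both the $\pi_*$-adjunction in your Step 1 and the naive coordinate model for its de Rham algebra need justification.
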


Shifted cotangent stacks serve as local models for shifted symplectic
geometry, analogous to classical cotangent bundles \cite[Section 2.1]{Calaque}.

\begin{theorem}[PTVV Mapping Stack Theorem] \label{thm:ptvv}
  Let $F$ be a derived Artin stack equipped with an $n$-shifted
  symplectic structure, and let $Y$ be an $\cO$-compact derived stack
  equipped with an $\cO$-orientation of dimension $d$. Then the
  derived mapping stack  $\mathbb{R}\mathrm{Map}(Y, F)$ 
  has    a canonical $(n-d)$-shifted symplectic structure \cite[Theorem 2.5]{PTVV}.
\end{theorem}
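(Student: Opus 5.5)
The plan is the standard \emph{transgression} argument: pull the symplectic form back along the evaluation map and integrate it over $Y$ using the orientation. Write $M := \mathbb{R}\mathrm{Map}(Y, F)$ and let
\begin{align*}
  \mathrm{ev} \colon Y \times M \to F
\end{align*}
be the evaluation morphism. Pulling back gives a closed $2$-form $\mathrm{ev}^*\omega \in \mathcal{A}^{2,cl}(Y \times M, n)$.

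\textbf{Step 1: integration along $Y$.} The $\cO$-orientation is a class $[Y] \colon \mathbb{R}\Gamma(Y, \cO_Y) \to \K[-d]$. I would build from it an integration map
\begin{align*}
  \int_{[Y]} \colon \DR(Y \times M) \longrightarrow \DR(M)[-d].
\end{align*}
Work affine-locally on $M$, say at an $A$-point $\Spec A \to M$. The key input is $\cO$-compactness of $Y$: it gives $\mathbb{R}\Gamma(Y_A, \cO_{Y_A}) \simeq \mathbb{R}\Gamma(Y, \cO_Y) \otimes A$, so $[Y] \otimes A$ is defined. There is a natural map
\begin{align*}
  \DR(Y \times M) \to \mathbb{R}\Gamma(Y, \cO_Y) \otimes \DR(M),
\end{align*}
which remembers only the $M$-directions of the forms. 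Composing it with $[Y]$ gives the integration map.

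This map is compatible with the mixed structure (the de Rham differential) and with the weight grading, because $[Y]$ is $\K$-linear and only the $\DR(M)$ factor carries the mixed structure. It therefore sends $\mathcal{A}^{2,cl}(-, n)$ to $\mathcal{A}^{2,cl}(-, n-d)$, functorially in $A$. Globalizing via the mapping-space definition of closed forms on stacks, I obtain a closed form
\begin{align*}
  \omega_M := \int_{[Y]} \mathrm{ev}^*\omega \in \mathcal{A}^{2,cl}(M, n-d).
\end{align*}

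\textbf{Step 2: nondegeneracy.} It suffices to check that the underlying $2$-form of $\omega_M$ is nondegenerate. At an $A$-point $f \colon Y_A \to F$, the tangent complex is
\begin{align*}
  \TT_{M,f} \simeq \mathbb{R}\Gamma(Y_A, f^*\TT_F).
\end{align*}
This identification needs $M$ to be a derived Artin stack with a perfect cotangent complex; it follows from representability of mapping stacks with $\cO$-compact source, which I would take as given. Under this identification, $\omega_M$ induces the pairing
\begin{align*}
  \mathbb{R}\Gamma(Y_A, f^*\TT_F) \otimes \mathbb{R}\Gamma(Y_A, f^*\TT_F) \to \mathbb{R}\Gamma(Y_A, f^*\TT_F \otimes f^*\TT_F) \xrightarrow{f^*\omega} \mathbb{R}\Gamma(Y_A, \cO)[n] \xrightarrow{[Y]} A[n-d].
\end{align*}
The adjoint of this pairing factors as a composite of two maps:
\begin{itemize}
  \item $\mathbb{R}\Gamma(Y_A, f^*\Theta_\omega)$, which is an equivalence because $\Theta_\omega$ is;
  \item the duality map $\mathbb{R}\Gamma(Y_A, E^\vee)[d] \to \mathbb{R}\Gamma(Y_A, E)^\vee$ for the perfect complex $E = f^*\TT_F$.
\end{itemize}
The second map is an equivalence precisely by the defining property of an $\cO$-orientation of dimension $d$. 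Hence $\Theta_{\omega_M} \colon \TT_M \to \LL_M[n-d]$ is an equivalence, and $\omega_M$ is an $(n-d)$-shifted symplectic structure.

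\textbf{Main obstacle.} I expect the hardest part to be Step 1. One must make the integration map a genuine morphism of graded mixed complexes, functorial in $A$ and compatible with the Hodge filtration, so that it acts on the full closed form $\omega_p + \omega_{p+1} + \cdots$ and not only on the leading $2$-form. My first approach would be to define it on the level of the functor of closed forms $\mathcal{A}^{2,cl}(-,n)$, using the base-change property supplied by $\cO$-compactness. Nondegeneracy is then a formal duality check.
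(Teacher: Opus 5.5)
Your construction of $\int_{[Y]}\mathrm{ev}^*\omega$ and your non-degeneracy check coincide with the proof of \cite[Theorem 2.5]{PTVV}, which the paper only quotes and does not prove. In that proof the form is also built $A$-pointwise, as the composite of $\DR(Y\times\Spec A)\to\mathbb{R}\Gamma(Y,\cO_Y)\otimes_\K\DR(A)$ with $[Y]$. Here the first map is projection to weight zero in the $Y$-direction, identified via compactness of $\cO_Y$. Non-degeneracy is likewise shown by factoring $\Theta_{\omega_M}$ through $\mathbb{R}\Gamma(Y_A,f^*\Theta_\omega)$ followed by the orientation duality $\mathbb{R}\Gamma(Y_A,E^\vee)[d]\xrightarrow{\sim}\mathbb{R}\Gamma(Y_A,E)^\vee$. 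Your degree shifts check out. Step 1 is also less delicate than you fear: once the integration map is a map of graded mixed complexes, it automatically acts on the whole series $\omega_2+\omega_3+\cdots$.

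The one step that is wrong as written is your justification that $M=\mathbb{R}\mathrm{Map}(Y,F)$ is a derived Artin stack locally of finite presentation. This does not follow from $\cO$-compactness of $Y$. That condition, together with the orientation, only controls the functors $\mathbb{R}\Gamma(Y_A,-)$, and there is no general representability theorem for mapping stacks out of an arbitrary $\cO$-compact source. For this reason PTVV impose geometricity of $\mathbb{R}\mathrm{Map}(Y,F)$ as an explicit hypothesis of their Theorem 2.5; the statement as reproduced in this paper silently omits it. Without that hypothesis, $M$ need not have a perfect cotangent complex. Then $\Theta_{\omega_M}\colon\TT_M\to\LL_M[n-d]$, and indeed the notion of an $(n-d)$-shifted symplectic structure on $M$, are not even defined. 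State representability as an assumption, to be verified case by case; it is known, for instance, for $\mathrm{Bun}_G(Y)$ and $\mathrm{LocSys}_G(C)$. With that assumption, the rest of your argument goes through.
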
 Here ``orientations," see \cite[Definition 2.4]{PTVV}, on derived stacks provide the homological equivalent of integration. This condition is a
prerequisite for pushing forward symplectic structures along mapping
spaces, analogous to integration along the fiber.
Furthermore, the mapping stack theorem generates examples of
shifted symplectic stacks. Moduli spaces of sheaves or
representations inherit shifted
symplectic structures from their domain curves or surfaces via the
mapping stack theorem. 
\vspace{0.1in}

Let us give some applications of the mapping stack theorem above.
\begin{example}
   Let $ G $ be an affine algebraic
    group  equipped with a nondegenerate invariant symmetric bilinear pairing  on its Lie algebra. Then the classifying stack is given as the functor   \[
\mathsf{B G} : Z \in \mathbf{dAff}_\K^{\mathrm{op}} \longmapsto 
\{\text{principal } G\text{-bundle on } Z\} 
\in \mathcal{S}.
\]Recall that $\mathsf{B G}$ has a \(2\)-shifted symplectic structure. Furthermore, there is an equivalence $\mathbb{T}_{\mathsf{B G}}\simeq \mathfrak{g}[1]$, where $\mathfrak{g}$ is the Lie algebra of G with the adjoint representation. 

Likewise, the \textit{moduli of perfect complexes} can be defined as the derived stack \[
\mathsf{Perf} : Z \in \mathbf{dAff}_\K^{\mathrm{op}} \longmapsto 
\{\text{perfect complexes on }   Z\} 
\in \mathcal{S}
\] such that it also carries a canonical \(2\)-shifted symplectic structure and that its tangent complex $\mathbb{T_{\mathsf{Perf}}}$ is equivalent to $E^{\vee} \otimes E[1]$, with $E$ the universal complex.

Combining these examples with the PTVV's mapping stack theorem above, let $Y$ be a smooth and projective Calabi-Yau $d$-fold, then by \cite{PTVV}:
\begin{enumerate}
  \item The derived Artin stack
    \( \mathrm{Perf}(Y) :=  \R\mathrm{Map}(Y, \mathrm{Perf}) \)
    of perfect complexes on $Y$ is \((2-d)\)-shifted symplectic.
    \item For a reductive group $G$, the derived Artin stack
    \( \mathrm{Bun}_G(Y) := \R\mathrm{Map}(Y, \mathsf{B G}) \)
    of principal $G$-bundles on $Y$ is \((2-d)\)-shifted symplectic.
\end{enumerate}
\end{example}

\begin{example}
    Fix the pair $(\mathsf{B G}, \omega)$ with $\omega \in \mathbb{S}ymp(\mathsf{B G}, 2)$. Given  a smooth and proper curve $C$, we call
    \begin{equation*}
        \mathrm{LocSys}_G(C) := \R\mathrm{Map} (C_{dR},\mathrm{BG})
    \end{equation*}
    the \bfem{moduli stack of flat $ G $-connections on $ C $}. Then Theorem \ref{thm:ptvv} also implies two important consequences:
    \begin{enumerate}
        \item $ \mathrm{LocSys}_G(C)$ is 0-shifted symplectic.
        \item There is a natural closed 2-form of degree 1 on $ \mathrm{Bun}_G(C) $, denoted by $\int_C ev^* \omega$, such that we have the identification \cite[Prop. 1.24]{Safronov2023} \begin{equation*}
        \mathrm{LocSys}_G(C)\simeq {\rm T}_{\int_C ev^* \omega}^*\mathrm{Bun}_G(C).
    \end{equation*}
            \end{enumerate} That is, $ \mathrm{LocSys}_G(C) $ can be equivalently seen as the \textit{twisted cotangent stack} of $ \mathrm{Bun}_G(C)$ with the twisting 2-form $ \int_C ev^* \omega \in \mathcal{A}^{2,cl}(\mathrm{Bun}_G(C), 1).$
\end{example}

Let us now list other key concepts, along with certain remarks and observations.
\begin{definition}[Isotropic and Lagrangian Structures]
  Let $(X, \omega)$ be an $n$-shifted symplectic derived stack. A map
  $f: L \to X$ is equipped with an \bfem{isotropic structure} if
  there is a specified path $h$ between $0$ and $f^*\omega$ in the
  Kan complex of
  closed 2-forms $\mathcal{A}^{2, cl}(L, n)$ \cite[Definition
  2.7]{PTVV}. 
  
  The Kan complex of isotropic structures is
  defined as $$Isot(f, \omega) := Path_{0,
  f^*\omega}(\mathcal{A}^{2,cl}(L, n)).$$ This
  null-homotopy induces a morphism $\Theta_h: \TT_L \to
  \LL_{L/X}[n-1]$. An isotropic structure is
  \bfem{Lagrangian} if $\Theta_h$ is an equivalence in the stable
  $\infty$-category $L_{qcoh}(L)$ \cite[Definition 2.8]{PTVV}.
\end{definition}

The requirement of a specified null-homotopy replaces the classical
condition of a vanishing differential form. In an $\infty$-category,
the space of null-homotopies carries higher categorical
structure; the choice of isotropic structure is part of the geometric
data, not a property.
\vspace{1in}

\begin{definition}[Shifted Contact Structures]
  Following Berktav \cite[Section 3.2]{Berktav1}, \cite[Definition 2.17]{Berktav2}, an \bfem{$n$-shifted contact structure} on a derived Artin stack $X$ consists of a line bundle $\cL$ on $X$, an $n$-shifted $1$-form $\alpha \in \pi_0 \mathbb{R}\Gamma(X, \LL_X \otimes^{\mathbb{L}}_{\cO_X} \cL[n])$, and a fiber sequence of perfect complexes built from the dual contraction map $\alpha^\vee \colon \TT_X \to \cL[n]$
  \begin{equation} \label{eq:contact_dist}
    \cK \to \TT_X \xrightarrow{\alpha^\vee} \cL[n]
  \end{equation}
  such that the underlying 2-form of the derived de Rham differential $d_{dR}\alpha$ induces an equivalence $\cK \xrightarrow{\sim} \cK^\vee \otimes^{\mathbb{L}}_{\cO_X} \cL[n]$ in $L_{qcoh}(X)$, rendering $\cK$ a symplectic complex.
\end{definition}

Shifted contact structures provide the
odd-dimensional counterpart to shifted symplectic
stacks. The line bundle accounts for the scaling ambiguity in contact
geometry, allowing the structure to be defined globally
across charts.

\begin{definition}[Derived Legendrian Structures]\label{definition:legendrian structures}
  Let $(X, \cL, \cK, \alpha)$ be an $n$-shifted contact derived stack. A map $f \colon L \to X$ is \bfem{isotropic} if it is equipped with a null-homotopy $f^*\alpha \simeq 0$. 

  An isotropic structure is called \bfem{Legendrian} if it induces a stable fiber sequence in $L_{qcoh}(L)$ of the form
  \begin{equation*}
    \cO_L \to \LL_{L/X} \otimes^{\mathbb{L}}_{\cO_L} f^*\cL[n-1] \to \TT_L.
  \end{equation*}
\end{definition}

\begin{remark}
  The \bfem{Legendrian} condition above is equivalent to the one given in \cite[Section 3.3]{Berktav3}. In \cite{Berktav3}, the isotropic lift $\TT_L \to f^*\cK$ induces a stable fiber sequence
  \begin{equation*}
    \TT_L \to f^*\cK \to \LL_L \otimes^{\mathbb{L}}_{\cO_L} f^*\cL[n].
  \end{equation*}
  Let $C := \mathrm{cofib}(\TT_L \to f^*\cK)$. The condition is $C \simeq \LL_L \otimes^{\mathbb{L}}_{\cO_L} f^*\cL[n]$.

  By definition of $\cK$, there is a sequence $f^*\cK \to f^*\TT_X \to f^*\cL[n]$. The relative tangent sequence for $f$ is $\TT_{L/X} \to \TT_L \to f^*\TT_X$.
  The pasting law for exact triangles of fibers (Lemma \ref{lem:pasting_law}) applied to $\TT_L \to f^*\cK \to f^*\TT_X$ produces a stable fiber sequence of cofibers
  \begin{equation*}
    C \to \TT_{L/X}[1] \to f^*\cL[n].
  \end{equation*}
  A shift by $-1$ and substitution of $C$ gives
  \begin{equation*}
    \LL_L \otimes^{\mathbb{L}}_{\cO_L} f^*\cL[n-1] \to \TT_{L/X} \to f^*\cL[n-1].
  \end{equation*}
  Taking the $\cO_L$-linear dual and tensoring with $f^*\cL[n-1]$ gives
  \begin{equation*}
    \cO_L \to \LL_{L/X} \otimes^{\mathbb{L}}_{\cO_L} f^*\cL[n-1] \to \TT_L.
  \end{equation*}
\end{remark}

The Legendrian condition establishes the derived geometric analogue
for maximally isotropic submanifolds. As in the classical
theory, derived Legendrian structures are governed by the
underlying contact distribution, providing the geometric foundation
for studying boundary conditions in topological field theories.
\section{Transversality}
\subsection{Classical Transversality} \label{sec:classical}

\begin{definition}[Liouville Vector Field]
  Let $(W, \omega)$ be a symplectic manifold. A vector field $Y$ on
  $W$ is called a \bfem{Liouville vector field} if its Lie
  derivative satisfies $\mathcal{L}_Y \omega = \omega$.
\end{definition}

\begin{lemma}[{\cite[Lemma 1.4.5]{Geiges}}] \label{lem:classical_geiges}
  Let $(W, \omega)$ be a symplectic manifold of dimension $2n$, and
  let $Y$ be a Liouville vector field on $W$. If $M \subset W$ is a
  smooth hypersurface transverse to $Y$, then the restriction $\alpha
  := (\iota_Y \omega)|_M$ is a contact form on $M$.
\end{lemma}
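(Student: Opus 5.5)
We prove Lemma \ref{lem:classical_geiges} by the standard Cartan-calculus argument, in three steps. First we produce the primitive $\lambda := \iota_Y\omega$ and show it satisfies $d\lambda = \omega$. Second we reduce the contact condition on $M$ to a top-degree form on $W$ that is nowhere zero. Third we use transversality to conclude that $\alpha\wedge(d\alpha)^{n-1}$ is a volume form on $M$.

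For the first step, Cartan's formula gives $\mathcal{L}_Y\omega = d\iota_Y\omega + \iota_Y d\omega$. Since $\omega$ is closed, the second term vanishes, and the Liouville condition $\mathcal{L}_Y\omega=\omega$ becomes
\begin{align*}
  d\lambda = \omega .
\end{align*}
Writing $j\colon M\hookrightarrow W$ for the inclusion, we set $\alpha = j^*\lambda$. Naturality of $d$ then gives $d\alpha = j^*\omega$. Since $\dim M = 2n-1$, $\alpha$ is a contact form exactly when $\alpha\wedge(d\alpha)^{n-1}$ vanishes nowhere on $M$. This form equals $j^*\bigl(\lambda\wedge\omega^{n-1}\bigr)$.

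For the second step, we identify $\lambda\wedge\omega^{n-1}$ with a contraction of the volume form. Because $\iota_Y$ is an antiderivation of odd degree and $\omega$ has even degree,
\begin{align*}
  \iota_Y(\omega^n) = n\,(\iota_Y\omega)\wedge\omega^{n-1} = n\,\lambda\wedge\omega^{n-1}.
\end{align*}
Hence
\begin{align*}
  \alpha\wedge(d\alpha)^{n-1} = \tfrac{1}{n}\, j^*\bigl(\iota_Y\omega^n\bigr).
\end{align*}
Nondegeneracy of $\omega$ means $\omega^n$ is a volume form on $W$. This reduces the lemma to a pointwise statement: the restriction of $\iota_Y\Omega$ to $M$ vanishes nowhere, where $\Omega=\omega^n$ is a volume form.

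For the third step, fix $p\in M$ and a basis $v_1,\dots,v_{2n-1}$ of $T_pM$. Transversality says $Y_p\notin T_pM$, so $Y_p,v_1,\dots,v_{2n-1}$ is a basis of $T_pW$. Therefore
\begin{align*}
  (\iota_Y\Omega)_p(v_1,\dots,v_{2n-1}) = \Omega_p(Y_p,v_1,\dots,v_{2n-1}) \neq 0 .
\end{align*}
So $\alpha\wedge(d\alpha)^{n-1}$ vanishes nowhere on $M$, and $\alpha$ is a contact form. The argument also shows that $\ker\alpha$ at $p$ is the $\omega$-orthogonal complement of $Y_p$ inside $T_pM$, on which $d\alpha$ is nondegenerate. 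This is the picture that the derived version (Theorem \ref{thm:derived transversality}) will mirror, with the fibre sequence \eqref{eq:contact_dist} in place of this orthogonal complement.

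None of these steps is a real obstacle. The only point needing care is the sign and multiplicity bookkeeping in $\iota_Y(\omega^n)=n\,\lambda\wedge\omega^{n-1}$. This holds because $\omega$ has even degree, so the odd antiderivation $\iota_Y$ passes through each factor with no sign. Transversality is used only in the final pointwise step.
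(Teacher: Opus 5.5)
Your proposal is correct and follows essentially the same route as the paper: Cartan's formula gives $d(\iota_Y\omega)=\omega$, then the identity $\iota_Y(\omega^n)=n\,(\iota_Y\omega)\wedge\omega^{n-1}$, then transversality to evaluate $\omega^n$ on the basis $(Y_p,v_1,\dots,v_{2n-1})$ of $T_pW$. Your version is slightly more careful than the paper's, since you make the pullback $j^*$ and the even-degree sign bookkeeping explicit.
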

\begin{proof}
  By Cartan's magic formula and the assumption that the symplectic
  form is closed ($d\omega = 0$), the following holds
  \begin{equation*}
    d(\iota_Y \omega) = \mathcal{L}_Y \omega - \iota_Y d\omega = \omega.
  \end{equation*}
  Therefore, $d\alpha = \omega|_M$. Showing that $\alpha$ is a
  contact form requires verifying that $\alpha \wedge (d\alpha)^{n-1}$
  is a nowhere vanishing volume form on $M$.

  On the symplectic manifold $W$, the top-dimensional form $\omega^n$
  is a volume form. Contracting this with the Liouville vector field $Y$ we obtain
  \begin{equation*}
    \iota_Y(\omega^n) = n (\iota_Y \omega) \wedge \omega^{n-1}.
  \end{equation*}
  Because $Y$ is transverse to the hypersurface $M$, the tangent
  space admits the pointwise coordinate splitting $T_x W \cong T_x M
  \oplus \mathbb{R}Y_x$.
  
  Evaluating the top-form $\omega^n$ on the frame $(Y_x, e_1, \dots,
  e_{2n-1})$, where the vectors $e_i$ span the local tangent space
  $T_x M$, shows that the transverse contraction
  $\iota_Y(\omega^n)|_M$ defines a nowhere vanishing volume form on
  $M$. Restricting the identity to $M$ preserves this
  property, yielding
  \begin{equation*}
    n \alpha \wedge (d\alpha)^{n-1} \neq 0,
  \end{equation*}
  proving $\alpha$
  is a contact form on $M$.
\end{proof}

The classical proof relies on point-set transversality
(the splitting of the tangent space) and the pointwise evaluation
of differential forms on topological frames. In derived moduli
theory, intersections are rarely transverse and tangent
spaces jump in rank, requiring a homological replacement for
this coordinate argument.
\subsection{The Derived Setup and Statement} \label{sec:setup}

The classical Liouville vector field is replaced by a group action in
the derived setting.

\begin{definition}[Lie Algebra of a Group Stack]
  For an affine smooth group scheme $G$ with Lie algebra
  $\mathfrak{g}$, the cotangent complex of the classifying stack $BG$
  is $\mathfrak{g}^\vee[-1]$. A $G$-action on a stack $\widetilde{X}$
  induces an infinitesimal Lie algebra action morphism
  $\mathfrak{g} \otimes_\K \cO_{\widetilde{X}} \to
  \TT_{\widetilde{X}}$ in $L_{qcoh}(\widetilde{X})$ \cite[Section 1.2]{PTVV}.
\end{definition}

\begin{lemma}[Derived Atiyah Sequence] \label{lem:atiyah}
  Let $G$ be a smooth affine group scheme with Lie algebra $\mathfrak{g}$. For a principal $G$-torsor $p \colon \widetilde{X} \to X$ in $\dArt_\K$, the $G$-action induces a stable fiber sequence in $L_{qcoh}(\widetilde{X})$
  \begin{equation*}
    \mathbb{L}p^*\LL_{X} \to \LL_{\widetilde{X}} \to \mathfrak{g}^\vee \otimes_\K \cO_{\widetilde{X}}.
  \end{equation*}
\end{lemma}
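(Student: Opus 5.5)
The plan is to derive the sequence from the transitivity triangle for $p$, identify the relative term using the torsor cartesian square of Definition \ref{def:principal_torsor}, and then reduce to a computation on $G$ itself.

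First, the morphism $p \colon \widetilde{X} \to X$ gives the standard transitivity fiber sequence in $L_{qcoh}(\widetilde{X})$:
\begin{equation*}
  \mathbb{L}p^*\LL_X \to \LL_{\widetilde{X}} \to \LL_{\widetilde{X}/X}.
\end{equation*}
This is the cotangent analogue of Lemma \ref{lem:pasting_law}, applied to $\widetilde{X} \to X \to \Spec \K$. So it suffices to produce a natural equivalence $\LL_{\widetilde{X}/X} \simeq \mathfrak{g}^\vee \otimes_\K \cO_{\widetilde{X}}$.

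Second, I identify $\LL_{\widetilde{X}/X}$. The defining square of the torsor is homotopy cartesian, with $\widetilde{X} \to S$ the pullback of $S \to BG$ along $X \to BG$. Relative cotangent complexes are stable under derived base change, so
\begin{equation*}
  \LL_{\widetilde{X}/X} \simeq q^*\LL_{S/BG},
\end{equation*}
where $q \colon \widetilde{X} \to S$ is the structure map.

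Third, I compute $\LL_{S/BG}$. Applying the same base-change argument to the cartesian square with vertex $G$, formed by the fiber product $S \times_{BG} S \simeq G$, gives
\begin{equation*}
  \mathrm{pr}^*\LL_{S/BG} \simeq \LL_{G/S}.
\end{equation*}
Since $G$ is smooth and affine over $S$, $\LL_{G/S}$ is the locally free sheaf $\Omega^1_{G/S}$. Left translation trivializes it as $\mathfrak{g}^\vee \otimes \cO_G$, where $\mathfrak{g}^\vee = e^*\Omega^1_{G/S}$. Restricting along the identity section then gives $\LL_{S/BG} \simeq \mathfrak{g}^\vee \otimes \cO_S$. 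This matches the recollection that $\LL_{BG} \simeq \mathfrak{g}^\vee[-1]$, via the transitivity triangle for $S \to BG \to \Spec\K$, since $\LL_S = 0$. Pulling back along $q$ yields $\LL_{\widetilde{X}/X} \simeq \mathfrak{g}^\vee \otimes_\K \cO_{\widetilde{X}}$.

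Equivalently, one can bypass $BG$ and use the shear equivalence $G \times_S \widetilde{X} \simeq \widetilde{X}\times_X \widetilde{X}$, pulling back along the diagonal section. This gives $\LL_{\widetilde{X}/X}$ as the pullback of $\LL_{G/S}$ along the identity section.

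The main obstacle is showing that the resulting map $\LL_{\widetilde{X}} \to \mathfrak{g}^\vee\otimes\cO_{\widetilde{X}}$ is the one induced by the $G$-action. That is, it should be the dual of the infinitesimal action map $\mathfrak{g}\otimes\cO_{\widetilde{X}} \to \TT_{\widetilde{X}}$ from the preceding definition, rather than some abstract equivalence. For this I would use the action map $a \colon G\times_S\widetilde{X} \to \widetilde{X}$ restricted along $e \times \mathrm{id}$. Its differential in the $G$-direction computes the composite $\LL_{\widetilde{X}} \to \LL_{\widetilde{X}/X}$ under the shear identification. Checking this compatibility is a diagram chase with the first-order neighbourhood of the identity in $G$, using the universal property of $\LL$ in terms of square-zero extensions $X[M]$.
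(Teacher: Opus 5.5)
Your proposal is correct and is essentially the paper's argument. The paper follows exactly the route you list as the alternative: it takes the transitivity triangle for $p$, base-changes along the shear square formed by the action $\rho$ and $\mathrm{pr}_2$ to get $\rho^*\LL_{\widetilde{X}/X}\simeq\LL_{G\times\widetilde{X}/\widetilde{X}}\simeq\mathrm{pr}_1^*\LL_G$, and then pulls back along $s=(e,\mathrm{id}_{\widetilde{X}})$; your $S\to BG$ computation is just the universal instance of this.

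The compatibility you flag as the main obstacle is not checked in the paper's proof, since the next corollary simply defines the infinitesimal action through this identification. In the shear-square route it is immediate from naturality of the base-change map, because the composite $\LL_{\widetilde{X}}\to\mathfrak{g}^\vee\otimes\cO_{\widetilde{X}}$ is $s^*$ of $d\rho$ followed by projection onto $\LL_{G\times\widetilde{X}/\widetilde{X}}$. So no square-zero diagram chase is needed.
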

\begin{proof}
  By \cite[Definition 1.4.1.15]{HAG-II}, the morphism $p \colon \widetilde{X} \to X$ has a relative cotangent complex. By \cite[Lemma 1.4.1.16(1)]{HAG-II}, for any affine point $x \colon \Spec(A) \to \widetilde{X}$, there is a homotopy cofiber sequence of stable $A$-modules
  \begin{equation} \label{eq:local_transitivity}
    \LL_{X, p \circ x} \to \LL_{\widetilde{X}, x} \to \LL_{\widetilde{X}/X, x}.
  \end{equation}
  
  By \cite[Definitions 1.4.1.7 and 1.4.1.15]{HAG-II}, the pointwise complexes are the pullbacks of the global cotangent complexes, implying $$\LL_{\widetilde{X}, x} \simeq x^* \LL_{\widetilde{X}}, \quad \LL_{\widetilde{X}/X, x} \simeq x^* \LL_{\widetilde{X}/X}, \; \text{ and } \ \LL_{X, p \circ x} \simeq x^* \mathbb{L}p^* \LL_X.$$ 
  
  Substituting these identifications into \eqref{eq:local_transitivity} gives a stable fiber sequence in $L_{qcoh}(\Spec(A))$ for every affine point $x$
  \begin{equation*}
    x^* \mathbb{L}p^* \LL_X \to x^* \LL_{\widetilde{X}} \to x^* \LL_{\widetilde{X}/X}.
  \end{equation*}
  
  The stable $\infty$-category $L_{qcoh}(\widetilde{X})$ is equivalent to the homotopy limit of $L_{qcoh}(\Spec(A))$ over all affine points mapping to $\widetilde{X}$. Pointwise exactness establishes the global transitivity fiber sequence in $L_{qcoh}(\widetilde{X})$
  \begin{equation} \label{eq:standard_transitivity}
    \mathbb{L}p^*\LL_{X} \to \LL_{\widetilde{X}} \to \LL_{\widetilde{X}/X}.
  \end{equation}
  
  Because $p$ is a principal $G$-torsor, the action map $\rho \colon G \times \widetilde{X} \to \widetilde{X}$ and the projection $\mathrm{pr}_2 \colon G \times \widetilde{X} \to \widetilde{X}$ define a homotopy pullback square in $\dArt_\K$
  \[
    \begin{tikzcd}
      G \times \widetilde{X} \arrow[r, "\rho"] \arrow[d, "\mathrm{pr}_2"'] & \widetilde{X} \arrow[d, "p"] \\
      \widetilde{X} \arrow[r, "p"'] & X
    \end{tikzcd}
  \]
  
  By \cite[Lemma 1.4.1.16(2)]{HAG-II}, base change along $p$ gives the equivalence
  \begin{equation*}
    \mathbb{L}\rho^* \LL_{\widetilde{X}/X} \simeq \LL_{G \times \widetilde{X} / \widetilde{X}}.
  \end{equation*}
  
  The morphism $\mathrm{pr}_2$ is the base change of the structural morphism $G \to \Spec(\K)$ along $\pi_{\widetilde{X}} \colon \widetilde{X} \to \Spec(\K)$. Applying \cite[Lemma 1.4.1.16(2)]{HAG-II} gives the equivalence $\LL_{G \times \widetilde{X} / \widetilde{X}} \simeq \mathbb{L}\mathrm{pr}_1^* \LL_{G}$, yielding
  \begin{equation*}
    \mathbb{L}\rho^* \LL_{\widetilde{X}/X} \simeq \mathbb{L}\mathrm{pr}_1^* \LL_{G}.
  \end{equation*}
  
  Pulling back this equivalence along the section $s = (e, \mathrm{id}_{\widetilde{X}}) \colon \widetilde{X} \to G \times \widetilde{X}$ gives
  \begin{equation*}
    \mathbb{L}s^* \mathbb{L}\rho^* \LL_{\widetilde{X}/X} \simeq \mathbb{L}s^* \mathbb{L}\mathrm{pr}_1^* \LL_{G}.
  \end{equation*}
  
  The section $s$ satisfies the composition identities $\rho \circ s = \mathrm{id}_{\widetilde{X}}$ and $\mathrm{pr}_1 \circ s = e \circ \pi_{\widetilde{X}}$, shown in the commutative diagrams
  \[
    \begin{tikzcd}
        \widetilde{X} \arrow[r, "s"] \arrow[dr, "\mathrm{id}_{\widetilde{X}}"'] & G \times \widetilde{X} \arrow[d, "\rho"] \\
         & \widetilde{X}
    \end{tikzcd}
    \quad \quad
    \begin{tikzcd}
        \widetilde{X} \arrow[r, "s"] \arrow[d, "\pi_{\widetilde{X}}"'] & G \times \widetilde{X} \arrow[d, "\mathrm{pr}_1"] \\
        \Spec(\K) \arrow[r, "e"'] & G
    \end{tikzcd}
  \]
  
  These identities reduce the pullback equivalence to
  \begin{equation*}
    \LL_{\widetilde{X}/X} \simeq \mathbb{L}\pi_{\widetilde{X}}^* \mathbb{L}e^* \LL_{G}.
  \end{equation*}
  
  For a smooth affine group scheme $G$, the pullback of the cotangent complex along the identity section gives $\mathbb{L}e^* \LL_{G} \simeq \mathfrak{g}^\vee$. This establishes the equivalence $\LL_{\widetilde{X}/X} \simeq \mathfrak{g}^\vee \otimes_{\mathbb{K}} \cO_{\widetilde{X}}$. Substituting this identification into \eqref{eq:standard_transitivity} produces the stable fiber sequence $$\mathbb{L}p^*\LL_{X} \to \LL_{\widetilde{X}} \to \mathfrak{g}^\vee \otimes_{\mathbb{K}} \cO_{\widetilde{X}}.$$
    \end{proof}

\begin{corollary} \label{cor:fundamental_morphism}
  Let $p \colon \widetilde{X} \to X$ be a principal $\Gm$-torsor in $\dArt_\K$. The standard left-invariant vector field on $\Gm$ determines a fundamental morphism $Y \colon \cO_{\widetilde{X}} \to \TT_{\widetilde{X}}$ in $L_{qcoh}(\widetilde{X})$ such that the relative cotangent sequence takes the form
  \begin{equation*}
    \LL p^* \LL_X \xrightarrow{(\TT p)^\vee} \LL_{\widetilde{X}} \xrightarrow{Y^\vee} \cO_{\widetilde{X}}.
  \end{equation*}
\end{corollary}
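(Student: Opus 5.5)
The plan is to specialize Lemma \ref{lem:atiyah} to $G = \Gm$ and then identify the two maps in the resulting fiber sequence. Since $\Gm = \Spec \K[t,t^{-1}]$ is a smooth affine group scheme of dimension one, its Lie algebra is $\mathfrak{g} = \K$. A choice of generator is fixed by the standard left-invariant vector field $t\,\partial_t$, whose value at the identity $e = 1$ is $\partial_t|_{t=1}$; dually, the invariant form $dt/t$ gives the generator of $\mathfrak{g}^\vee = \mathbb{L}e^*\LL_{\Gm} \simeq \K$. With these choices, Lemma \ref{lem:atiyah} gives a fiber sequence
\begin{equation*}
\mathbb{L}p^*\LL_X \to \LL_{\widetilde{X}} \to \cO_{\widetilde{X}}.
\end{equation*}
What remains is to identify the first arrow with $(\TT p)^\vee$ and the second with the dual of a morphism $Y$ defined from the action.

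First, define $Y$. The infinitesimal action morphism $\mathfrak{g}\otimes_\K \cO_{\widetilde{X}} \to \TT_{\widetilde{X}}$ is recalled in the Definition preceding Lemma \ref{lem:atiyah}. Precomposing it with the generator $\cO_{\widetilde{X}} \simeq \K\cdot t\partial_t \otimes \cO_{\widetilde{X}}$ gives $Y \colon \cO_{\widetilde{X}} \to \TT_{\widetilde{X}}$. Concretely, $Y$ is obtained by applying $\mathbb{L}s^*$ to the differential of the action $\rho \colon \Gm\times\widetilde{X}\to\widetilde{X}$ along $s=(e,\mathrm{id})$, restricted to the $\TT_{\Gm}$ summand.

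Second, identify the first arrow. It is the map from the transitivity sequence \eqref{eq:standard_transitivity}, namely the canonical map $\mathbb{L}p^*\LL_X \to \LL_{\widetilde{X}}$ induced by $p$. Since $\LL_X$ and $\LL_{\widetilde{X}}$ are perfect, it is by definition the $\cO_{\widetilde{X}}$-linear dual of $\TT p \colon \TT_{\widetilde{X}} \to p^*\TT_X$. So this step is immediate from biduality.

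The main obstacle is showing that the composite $\LL_{\widetilde{X}} \to \LL_{\widetilde{X}/X} \simeq \mathfrak{g}^\vee\otimes\cO_{\widetilde{X}} \simeq \cO_{\widetilde{X}}$ equals $Y^\vee$. To do this, I would trace the identifications in the proof of Lemma \ref{lem:atiyah}. The projection $\LL_{\widetilde{X}} \to \LL_{\widetilde{X}/X}$, pulled back along $\rho$ and then along $s$, becomes
\begin{equation*}
\mathbb{L}s^*\LL_\rho \colon \LL_{\widetilde{X}} \simeq \mathbb{L}s^*\mathbb{L}\rho^*\LL_{\widetilde{X}} \to \mathbb{L}s^*\LL_{\Gm\times\widetilde{X}} \to \mathbb{L}s^*\mathbb{L}\mathrm{pr}_1^*\LL_{\Gm} \simeq \mathfrak{g}^\vee\otimes\cO_{\widetilde{X}}.
\end{equation*}
Here I use the splitting $\LL_{\Gm\times\widetilde{X}} \simeq \mathrm{pr}_1^*\LL_{\Gm}\oplus \mathrm{pr}_2^*\LL_{\widetilde{X}}$ and the compatibility of the base-change equivalence with the transitivity maps. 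The composite above is precisely the dual of the restricted differential of the action, that is, $Y^\vee$ after the chosen trivialization $\mathfrak{g}^\vee \simeq \K$.

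Finally, the homotopy coherence is handled by the functoriality of cotangent complexes, so these identifications are equivalences of morphisms in $L_{qcoh}(\widetilde{X})$. Substituting them into the sequence yields $\LL p^*\LL_X \xrightarrow{(\TT p)^\vee} \LL_{\widetilde{X}} \xrightarrow{Y^\vee} \cO_{\widetilde{X}}$.
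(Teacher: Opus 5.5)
Your proposal is correct. It argues differently from the paper: the paper's proof is essentially definitional, while yours proves an identification that the paper takes for granted.

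The paper dualizes the equivalence $\LL_{\widetilde{X}/X} \simeq \mathfrak{g}^\vee \otimes_\K \cO_{\widetilde{X}}$ from Lemma \ref{lem:atiyah}. It then \emph{defines} the infinitesimal action morphism $a$ as the composite $\mathfrak{g}\otimes_\K\cO_{\widetilde{X}} \simeq \TT_{\widetilde{X}/X} \to \TT_{\widetilde{X}}$. With that definition, $a^\vee$ is by construction the projection $\LL_{\widetilde{X}} \to \LL_{\widetilde{X}/X}$ followed by the equivalence. The fiber sequence then follows by substitution into \eqref{eq:standard_transitivity}.

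You instead define $Y$ intrinsically, as $\mathbb{L}s^*$ of the differential of the action $\rho$ restricted to the $\TT_{\Gm}$-summand. You then prove that this agrees with the composite by tracing the base-change identifications in Lemma \ref{lem:atiyah}, in three steps.
\begin{enumerate}
\item Naturality of transitivity triangles for the cartesian square shows that $\mathbb{L}\rho^*\LL_{\widetilde{X}} \to \mathbb{L}\rho^*\LL_{\widetilde{X}/X} \simeq \LL_{\Gm\times\widetilde{X}/\widetilde{X}}$ equals $d\rho$ followed by $\LL_{\Gm\times\widetilde{X}}\to\LL_{\Gm\times\widetilde{X}/\widetilde{X}}$.
\item The product splitting matches $\LL_{\Gm\times\widetilde{X}/\widetilde{X}}\simeq \mathbb{L}\mathrm{pr}_1^*\LL_{\Gm}$ with projection onto the first summand.
\item Pulling back along $s = (e,\mathrm{id}_{\widetilde{X}})$ gives the result.
\end{enumerate}

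The paper's route buys brevity. However, it never checks that its $a$ coincides with the derivative of the action from the Definition preceding Lemma \ref{lem:atiyah}. Your route supplies exactly that identification. It is what justifies calling $Y$ the fundamental vector field of the action, and the paper uses it implicitly later: when $Y$ is contracted against the weight 1 form $\omega_{\widetilde{X}}$, and when $Z(Y)$ is identified with $\widetilde{X}^{\Gm}$ in Remark \ref{rmk:slice_vs_quotient}.

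One minor point: your notation $\mathbb{L}s^*\LL_\rho$ clashes with the standard use of $\LL_\rho$ for the relative cotangent complex of $\rho$. You mean $\mathbb{L}s^*$ applied to $d\rho \colon \mathbb{L}\rho^*\LL_{\widetilde{X}} \to \LL_{\Gm\times\widetilde{X}}$.
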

\begin{proof}
  The proof of Lemma \ref{lem:atiyah} establishes the equivalence $\LL_{\widetilde{X}/X} \simeq \mathfrak{g}^\vee \otimes_\K \cO_{\widetilde{X}}$. Taking the derived $\cO_{\widetilde{X}}$-linear dual of this equivalence gives an isomorphism
  \begin{equation*}
    \TT_{\widetilde{X}/X} \simeq \mathfrak{g} \otimes_\K \cO_{\widetilde{X}}.
  \end{equation*}
  The relative tangent complex sits in the transitivity fiber sequence $\TT_{\widetilde{X}/X} \to \TT_{\widetilde{X}} \xrightarrow{\TT p} \LL p^* \TT_X$. Composing the isomorphism with the canonical morphism $\TT_{\widetilde{X}/X} \to \TT_{\widetilde{X}}$ defines the infinitesimal action morphism
  \begin{equation*}
    a \colon \mathfrak{g} \otimes_\K \cO_{\widetilde{X}} \to \TT_{\widetilde{X}}.
  \end{equation*}
  The multiplicative group scheme $\Gm$ has the coordinate algebra $\K[t, t^{-1}]$. The tangent space at the identity is the $1$-dimensional $\K$-vector space generated by the derivation $\partial = t \partial_t$. This generator defines an isomorphism of $\K$-vector spaces $\K \xrightarrow{\sim} \mathfrak{g}$. The fundamental morphism $Y$ is defined as the composition
  \begin{equation*}
    Y \colon \cO_{\widetilde{X}} \simeq \K \otimes_\K \cO_{\widetilde{X}} \xrightarrow{\sim} \mathfrak{g} \otimes_\K \cO_{\widetilde{X}} \xrightarrow{a} \TT_{\widetilde{X}}.
  \end{equation*}
  Taking the derived dual of this composition we obtain
  \begin{equation*}
    Y^\vee \colon \LL_{\widetilde{X}} \xrightarrow{a^\vee} \mathfrak{g}^\vee \otimes_\K \cO_{\widetilde{X}} \xrightarrow{\sim} \cO_{\widetilde{X}}.
  \end{equation*}
  The morphism $a^\vee$ is the canonical projection $\LL_{\widetilde{X}} \to \LL_{\widetilde{X}/X}$ in the cotangent transitivity sequence, post-composed with the equivalence from Lemma \ref{lem:atiyah}. Substituting this identification into the standard transitivity sequence gives the stable fiber sequence
  \begin{equation*}
    \LL p^* \LL_X \xrightarrow{(\TT p)^\vee} \LL_{\widetilde{X}} \xrightarrow{Y^\vee} \cO_{\widetilde{X}}.
  \end{equation*}
\end{proof}

The $\Gm$-action allows the formulation of the contact space
without local analytic arguments \cite[Section 4.2]{Berktav1}.

\begin{definition}[Derived Liouville Condition]
  The $\Gm$-action has \textbf{weight 1} with respect to
  $\omega_{\widetilde{X}}$ if $\omega_{\widetilde{X}}$ is a weight 1 eigenform.
  Equivalently, the derived pullback of the action map
  $\rho: \Gm \times \widetilde{X} \to \widetilde{X}$ satisfies $\rho^*
  \omega_{\widetilde{X}} \simeq z \cdot \pi_{\widetilde{X}}^*
  \omega_{\widetilde{X}}$,
  where $z$ is the character coordinate on $\Gm$
  \cite[Section 2.1.1]{Calaque}.
\end{definition}

In classical geometry, the Liouville vector field dilates the
symplectic form. The weight 1 condition is the algebraic
translation of this dilation, rendering the symplectic form
homogeneous of degree 1. This homogeneity ensures the contact form
survives the quotient process.
The derived analogue of the classical transversality lemma can then be stated as follows.

\begin{theorem}[Derived Analogue of Classical Transversality] \label{thm:derived transversality}
  Let $(\widetilde{X}, \omega_{\widetilde{X}})$ be an $n$-shifted symplectic
  derived Artin stack equipped with a $\Gm$-action of weight 1. Let
  $X$ be the stack quotient $X := [\widetilde{X} / \Gm]$, and let $p \colon
  \widetilde{X} \to X$ be the projection. Then $X$ inherits an
  $n$-shifted contact structure.
\end{theorem}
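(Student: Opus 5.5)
The plan mirrors the classical proof of Lemma \ref{lem:classical_geiges}, with the Liouville field replaced by the fundamental morphism $Y \colon \cO_{\widetilde{X}} \to \TT_{\widetilde{X}}$ of Corollary \ref{cor:fundamental_morphism}, and pointwise transversality replaced by the fiber sequence of that corollary. Throughout, I assume (as the statement implicitly does) that the $\Gm$-action is free enough that $p \colon \widetilde{X} \to X$ is a principal $\Gm$-torsor.

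\textbf{Step 1: the contact data.} Let $\cL$ be the line bundle on $X$ associated to the torsor $p$ and the weight 1 character of $\Gm$. Then $p^*\cL \simeq \cO_{\widetilde{X}}$, and $\Gm$ acts on it with weight 1.

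Next, contract the underlying 2-form with $Y$. Concretely, take
\begin{equation*}
  \widetilde{\alpha} := \iota_Y \omega_{\widetilde{X}} \colon \cO_{\widetilde{X}} \xrightarrow{Y} \TT_{\widetilde{X}} \xrightarrow{\Theta_\omega} \LL_{\widetilde{X}}[n].
\end{equation*}

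Differentiate the weight 1 condition $\rho^*\omega \simeq z \cdot \pi^*\omega$ along $\partial = t\partial_t$ at $e$. This gives a homotopy between $\mathcal{L}_Y\omega$ and $\omega$. A derived Cartan formula, available in the mixed graded complex $\DR$ of \cite{CPTVV}, together with closedness of $\omega$, then gives $d_{dR}\widetilde{\alpha} \simeq \omega_{\widetilde{X}}$ at the level of underlying 2-forms. The same weight computation shows that $\widetilde{\alpha}$ is $\Gm$-equivariant of weight 1. Moreover, $\iota_Y\widetilde{\alpha} = \omega(Y,Y) \simeq 0$ by antisymmetry, so $\widetilde{\alpha}$ has no component along the fibre direction.

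Because of these two facts, $\widetilde{\alpha}$ factors through $(\TT p)^\vee \colon p^*\LL_X \to \LL_{\widetilde{X}}$ via the fiber sequence of Corollary \ref{cor:fundamental_morphism}. It then descends through the equivariant category $L^{\Gm}_{qcoh}(\widetilde{X}) \simeq L_{qcoh}(X)$ of Definition \ref{def:equivariant_sheaves} to a class
\begin{equation*}
  \alpha \in \pi_0\mathbb{R}\Gamma(X, \LL_X \otimes \cL[n]).
\end{equation*}
Finally, define $\cK := \mathrm{fib}(\alpha^\vee \colon \TT_X \to \cL[n])$.

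\textbf{Step 2: nondegeneracy.} This is the derived replacement of the volume-form argument, and I expect it to be the main obstacle. Pull everything back along $p$. From the dual Atiyah sequence,
\begin{equation*}
  \cO_{\widetilde{X}} \xrightarrow{Y} \TT_{\widetilde{X}} \to p^*\TT_X .
\end{equation*}
Apply Lemma \ref{lem:pasting_law} to the composite $\TT_{\widetilde{X}} \to p^*\TT_X \to p^*\cL[n] \simeq \cO[n]$, which equals $\widetilde{\alpha}^\vee = \Theta_\omega(Y)^\vee$. This identifies $p^*\cK$ with the fiber of $\widetilde{\alpha}^\vee$ restricted away from $Y$. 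Equivalently, $p^*\cK$ is the symplectic orthogonal complement $\langle Y\rangle^{\perp}/\langle Y\rangle$ reduced by $Y$: a coisotropic reduction of the rank-one isotropic subobject $Y$, which is isotropic since $\omega(Y,Y)\simeq 0$.

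The key point to prove is that the composite $\cO \xrightarrow{Y} \TT_{\widetilde{X}} \xrightarrow{\Theta_\omega} \LL_{\widetilde{X}}[n] \xrightarrow{Y^\vee[n]} \cO[n]$ is null. Granting that, one gets a $3\times 3$ diagram of fiber sequences in which $\Theta_\omega$ restricts to an equivalence
\begin{equation*}
  \mathrm{fib}(\widetilde{\alpha}^\vee) \simeq \mathrm{cofib}(Y)^\vee[n]
\end{equation*}
and hence descends to an equivalence $p^*\cK \simeq p^*\cK^\vee[n]$. I would build the $3\times 3$ diagram by applying the octahedral axiom twice, with Lemma \ref{lem:pasting_law} as the tool.

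One must also check that this equivalence is induced by $d_{dR}\widetilde{\alpha} \simeq \omega$, which was established in Step 1. Including the twist by $p^*\cL$ and tracking weights, it becomes $\cK \xrightarrow{\sim} \cK^\vee \otimes \cL[n]$ after descent.

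\textbf{Step 3: descent and coherence.} Since $p$ is an fpqc (indeed smooth) cover, pullback is conservative, so equivalences may be checked after pullback along $p$. Equivariance of all the constructions above makes them descend to $X$.

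The genuinely delicate part is coherence. The Cartan homotopy and the nullity of $\omega(Y,Y)$ must be chosen $\Gm$-equivariantly, so that they define data in the limit $L^{\Gm}_{qcoh}(\widetilde{X})$ and not merely on $\widetilde{X}$. I would try to handle this by working throughout with weight-graded mixed complexes. In that setting, weight 1 equivariance becomes a grading condition, and the homotopies can be taken to be strict in weight.
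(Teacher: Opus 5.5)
Your proposal is correct and follows the paper's proof essentially step for step: $\widetilde{\alpha}=\Theta_\omega\circ Y$, horizontality from the null-homotopy $\omega_{\widetilde{X}}(Y,Y)\simeq 0$ and the fiber sequence of Corollary \ref{cor:fundamental_morphism}, weight-1 equivariant descent to $\alpha\in\pi_0\mathbb{R}\Gamma(X,\LL_X\otimes\cL[n])$, $\cK:=\mathrm{fib}(\alpha^\vee)$, and nondegeneracy via the isotropic reduction $\cO_{\widetilde{X}}\to Y^\perp\to p^*\cK$ obtained from Lemma \ref{lem:pasting_law} and checked after pullback along $p$. Your derived Cartan argument for $d_{dR}\widetilde{\alpha}\simeq\omega_{\widetilde{X}}$ and your explicit equivalence $Y^\perp\simeq\mathrm{cofib}(Y)^\vee[n]$ only make precise what the paper asserts, and your freeness hypothesis is unnecessary, since $\widetilde{X}\to[\widetilde{X}/\Gm]$ is always a $\Gm$-torsor.
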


\begin{remark} \label{rmk:darboux_atlas}
  For $n < 0$, the descended contact structure
  on $X$ locally admits a contact Darboux atlas \cite[Theorem 3.7]{Berktav2}.
\end{remark}

Taking the quotient of a homogenous symplectic space descends the
symplectic data to a contact structure, avoiding a transverse
hypersurface \cite[Section 2.26]{Berktav2}.

\subsection{Proof of Theorem \ref{thm:A} and Analysis of Transversality}\label{sec:proof}

\begin{observation}
  \label{prop:transversality_failure}
  Transversality between a global vector field and a closed immersion of virtual codimension $1$ fails for proper derived Artin stacks, provided that the zero locus of the vector field intersects the closed immersion.
\end{observation}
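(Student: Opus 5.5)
We first fix what "transversality" means for a global vector field $Y\colon \cO_{\widetilde{X}} \to \TT_{\widetilde{X}}$ and a closed immersion $i\colon M \to \widetilde{X}$ of virtual codimension $1$. The derived replacement of the pointwise splitting $T_xW \cong T_xM \oplus \mathbb{R}Y_x$ used in Lemma \ref{lem:classical_geiges} is that the composite
\begin{equation*}
  \cO_M \xrightarrow{i^*Y} i^*\TT_{\widetilde{X}} \to \TT_{M/\widetilde{X}}[1]
\end{equation*}
is an equivalence in $L_{qcoh}(M)$. Here $\TT_{M/\widetilde{X}}[1]$ is the normal complex, which is a line bundle because the virtual codimension is $1$. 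Equivalently, $\TT_M \oplus \cO_M \to i^*\TT_{\widetilde{X}}$ is an equivalence. The plan is to show that this equivalence cannot hold at a point $x \in M$ lying in the zero locus $Z(Y)$, which is nonempty by hypothesis. The argument is by contradiction.

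The key step is to restrict to such a point. Choose a field-valued point $x\colon \Spec(\K) \to M$ with $i\circ x$ factoring through $Z(Y)$. By definition of the zero locus, the derived fiber of $Y$ along $x$, namely $x^*i^*Y\colon \K \to x^*\TT_{\widetilde{X}}$, is null-homotopic. Since $x^*$ is exact, it preserves equivalences. Applying $x^*$ to the transversality equivalence therefore gives an equivalence
\begin{equation*}
  \K \xrightarrow{0} x^*\TT_{M/\widetilde{X}}[1] \simeq \K .
\end{equation*}
The zero map is an isomorphism on $H^0$ only if $\K = 0$, which is a contradiction. The same computation works in the split form: $x^*\TT_M \oplus \K \to x^*\TT_{\widetilde{X}}$ would have the $\K$-summand mapping by zero, so it is not injective on $H^0$. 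This contradicts the equivalence, via Euler characteristics and the long exact sequence from Lemma \ref{lem:pasting_law}.

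Properness is used to guarantee that the relevant points exist. On a proper stack, a closed substack $M$ meeting $Z(Y)$ contains a closed $\K$-point of $Z(Y)$, because $\K$ is algebraically closed. More conceptually, for a $\Gm$-action the fixed locus is where the orbit closures limit, and properness forces such limits to lie in $M$.

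The main obstacle is making "zero locus" and "intersects" precise in the derived setting. I would take $Z(Y)$ to be the derived fiber product of the section $Y$ with the zero section of the linear stack $\mathbb{V}(\TT_{\widetilde{X}})$. Its $\K$-points are exactly the points where $x^*Y \simeq 0$ up to a homotopy. That homotopy has to be carried through, and it is enough to work on $H^0$ of the fiber, which keeps the argument at the level of homotopy groups.
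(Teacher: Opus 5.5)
Your argument is correct and is essentially the paper's: you pull the transversality equivalence $c\colon \cO_M \to i^*\TT_{\widetilde{X}} \to \TT_{M/\widetilde{X}}[1]$ back along a point of $M \times^h_{\widetilde{X}} Z(Y)$. There the null-homotopy of $x^*Y$ makes $x^*c$ null-homotopic, which is impossible for an equivalence of rank-one free modules over a nonzero ring. Using a $\K$-point where the paper uses an arbitrary $R$-point changes nothing.

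As in the paper, properness does no real work, since the hypothesis already supplies a point of the intersection, and a nonempty locally finitely presented stack over algebraically closed $\K$ has a $\K$-point anyway. You should drop your aside that properness forces $\Gm$-orbit limits to lie in $M$. It is not justified for a non-invariant $M$, and the statement concerns a general vector field rather than a $\Gm$-action.
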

\begin{proof}
  Let $\widetilde{X}$ be a proper derived Artin stack. Let $Y \colon \cO_{\widetilde{X}} \to \TT_{\widetilde{X}}$ be a global vector field in $L_{qcoh}(\widetilde{X})$. Let $i \colon H \to \widetilde{X}$ be a closed immersion. The derived normal complex $\mathcal{N}_{H/\widetilde{X}}$ is the cofiber in the tangent transitivity sequence
  \begin{align*}
    \TT_H \xrightarrow{di} i^*\TT_{\widetilde{X}} \to \mathcal{N}_{H/\widetilde{X}}.
  \end{align*}
  Classical transversality requires the composite morphism $c \colon \cO_H \xrightarrow{i^*Y} i^*\TT_{\widetilde{X}} \to \mathcal{N}_{H/\widetilde{X}}$ to be an equivalence in $L_{qcoh}(H)$. This equivalence establishes a homological splitting of the tangent sequence, generating a rank-$1$ free summand on $H$.

  The derived zero locus $Z(Y)$ is the homotopy pullback of the section $Y$ and the zero section $0 \colon \cO_{\widetilde{X}} \to \TT_{\widetilde{X}}$ in the $\infty$-category of derived stacks. If $\widetilde{X}$ possesses a non-vanishing virtual Euler class, the Poincar\'{e}-Hopf index theorem ensures $Z(Y)$ is non-empty.

  Suppose there exists an $R$-point $x \colon \mathrm{Spec}(R) \to H \times_{\widetilde{X}}^h Z(Y)$ in the derived intersection of the hypersurface and the zero locus. By the definition of the homotopy pullback $Z(Y)$, the restricted vector field $x^*Y \colon \cO_{\mathrm{Spec}(R)} \to x^*\TT_{\widetilde{X}}$ is equipped with a null-homotopy $x^*Y \simeq 0$.

  This null-homotopy trivializes the pulled-back composite morphism $x^*c$. Since $c$ is an equivalence, the pullback $x^*c$ must be an equivalence in $L_{qcoh}(\mathrm{Spec}(R))$. An equivalence between rank-$1$ free modules cannot be null-homotopic over a non-trivial ring $R$. The existence of zeros on $H$ obstructs the equivalence $c$. Global transversality fails.  
\end{proof}

\begin{remark}
  \label{rmk:slice_vs_quotient}
  The stack quotient $X \simeq |B_\bullet(\widetilde{X}, \mathbb{G}_m)|$ circumvents the transversality failure by absorbing the zeros of the vector field into its stabilizer structure. The vector field $Y$ is the infinitesimal generator of the $\mathbb{G}_m$-action on $\widetilde{X}$. The derived zero locus $Z(Y)$ is equivalent to the derived fixed point stack $\widetilde{X}^{\mathbb{G}_m}$. Instead of producing a topological singularity in the quotient, the fixed points acquire $\mathbb{G}_m$ as their stabilizer group in the resulting $\infty$-groupoid.

The projection $p \colon \widetilde{X} \to X$ defines the tangent fiber sequence $\cO_{\widetilde{X}} \xrightarrow{Y} \TT_{\widetilde{X}} \to \mathbb{L}p^* \TT_X$ in $L_{qcoh}(\widetilde{X})$. For an $R$-point $x \colon \mathrm{Spec}(R) \to Z(Y)$, the vanishing of the vector field corresponds to the null-homotopy $x^*Y \simeq 0$. This null-homotopy induces the splitting $$x^*\mathbb{L}p^*\TT_X \simeq x^*\TT_{\widetilde{X}} \oplus \cO_{\mathrm{Spec}(R)}[1] \quad \text{ in } L_{qcoh}(\mathrm{Spec}(R)).$$ The homological failure of transversality is converted into the shifted Lie algebra $\cO_{\mathrm{Spec}(R)}[1]$ of the stabilizer group. The complex $\mathbb{L}p^*\TT_X$ remains perfect globally because $\mathrm{Perf}(\widetilde{X})$ is a stable subcategory of $L_{qcoh}(\widetilde{X})$. This mechanism preserves geometric dualizability without imposing a global homological splitting on the tangent sequence.
\end{remark}

\begin{proof}[\textbf{Proof of Theorem \ref{thm:derived transversality}}]
  The proof proceeds in four steps. Let us outline each step: 
  
  We first trivialize the relative cotangent complex of the principal $\Gm$-torsor $p \colon \widetilde{X} \to X$ using the infinitesimal $\Gm$-action. We then construct an $n$-shifted $1$-form $\alpha_{\widetilde{X}}$ on $\widetilde{X}$ by contracting the symplectic form $\omega_{\widetilde{X}}$ with the fundamental vector field of the action. We prove this $1$-form is horizontal with respect to $p$. The horizontality and the weight 1 equivariant structure of $\omega_{\widetilde{X}}$ allow the descent of $\alpha_{\widetilde{X}}$ to a twisted $1$-form $\alpha$ on the quotient stack $X$. Finally, we define the derived contact distribution as the homotopy fiber of the descended $1$-form and deduce its non-degeneracy from the underlying symplectic structure on $\widetilde{X}$.
\vspace{5pt}

  \textbf{\textit{Step 1: The Relative Cotangent Sequence.}}
  Let $X$ denote the stack quotient $[\widetilde{X} / \Gm]$. By definition, the projection $p \colon \widetilde{X} \to X$ is a principal $\Gm$-torsor. By Corollary \ref{cor:fundamental_morphism}, the torsor determines a fundamental morphism $Y$ and induces the relative cotangent sequence
  \begin{equation*}
    \LL p^* \LL_X \xrightarrow{(\TT p)^\vee} \LL_{\widetilde{X}} \xrightarrow{Y^\vee} \cO_{\widetilde{X}}.
  \end{equation*}

  \textbf{\textit{Step 2: Construction of the 1-Form.}}
  The derived Artin stack $\widetilde{X}$ is equipped with an
  $n$-shifted symplectic structure $\omega_{\widetilde{X}}$. By \cite[Definition 1.18]{PTVV}, the
  non-degeneracy of $\omega_{\widetilde{X}}$ gives an equivalence of perfect
  complexes $\Theta_{\omega} \colon \TT_{\widetilde{X}} \xrightarrow{\sim}
  \LL_{\widetilde{X}}[n]$.
  Post-composing the fundamental morphism $Y \colon \cO_{\widetilde{X}} \to
  \TT_{\widetilde{X}}$ with this equivalence yields a morphism in $L_{qcoh}(\widetilde{X})$,
  identified via the Yoneda lemma
  $\mathbb{R}\Gamma(\widetilde{X}, E) \simeq
  \mathrm{Map}_{L_{qcoh}(\widetilde{X})}(\cO_{\widetilde{X}}, E)$,
  with an $n$-shifted $1$-form global section
  $\alpha_{\widetilde{X}} \in \pi_0 \mathbb{R}\Gamma(\widetilde{X},
  \LL_{\widetilde{X}}[n])$ \cite[Section 4.2]{Berktav1} such that
  \begin{equation*}
    \alpha_{\widetilde{X}} := \Theta_{\omega} \circ Y.
  \end{equation*}
  By the defining equivalence, this global section corresponds
  to a contraction morphism
  $\alpha_{\widetilde{X}}^\vee \colon \TT_{\widetilde{X}} \to \cO_{\widetilde{X}}[n]$.

  The composition $\cO_{\widetilde{X}} \xrightarrow{Y} \TT_{\widetilde{X}} \xrightarrow{\alpha_{\widetilde{X}}^\vee} \cO_{\widetilde{X}}[n]$ corresponds to the derived self-pairing of $Y$ with respect to $\omega_{\widetilde{X}}$. In the graded mixed derived de Rham complex \cite[Section 1.4.1]{CPTVV}, the graded commutativity of the contraction operator implies that the interior product $\iota_Y \iota_Y \omega_{\widetilde{X}}$ admits a canonical null-homotopy. This defines an equivalence $\omega_{\widetilde{X}}(Y, Y) \simeq 0$ within the mapping space $\mathrm{Map}_{L_{qcoh}(\widetilde{X})}(\cO_{\widetilde{X}}, \cO_{\widetilde{X}}[n])$.

  Applying the contravariant derived internal Hom functor
  $\mathbb{R}\underline{\mathcal{H}om}( - , \cO_{\widetilde{X}}[n])$
  to the exact triangle of tangent complexes $\cO_{\widetilde{X}} \xrightarrow{Y}
  \TT_{\widetilde{X}} \xrightarrow{\mathbb{T}p} \mathbb{L}p^*\TT_X \xrightarrow{+1}$
  gives the dual exact triangle 
  \begin{equation*}
    \mathbb{L}p^*\LL_X[n] \xrightarrow{(\mathbb{T}p)^\vee}
    \LL_{\widetilde{X}}[n] \xrightarrow{Y^\vee} \cO_{\widetilde{X}}[n] \xrightarrow{+1}
  \end{equation*}
  in $L_{qcoh}(\widetilde{X})$.
  Evaluating global sections by applying the functor $\mathrm{Map}_{L_{qcoh}(\widetilde{X})}(\cO_{\widetilde{X}}, -)$ to this exact triangle generates a long exact sequence in cohomology containing the segment
  \begin{equation*}
    \cdots \to \pi_0 \mathrm{Map}(\cO_{\widetilde{X}}, \mathbb{L}p^*\LL_X[n]) \xrightarrow{(\mathbb{T}p)^\vee_*} \pi_0 \mathrm{Map}(\cO_{\widetilde{X}}, \LL_{\widetilde{X}}[n]) \xrightarrow{Y^\vee_*} \pi_0 \mathrm{Map}(\cO_{\widetilde{X}}, \cO_{\widetilde{X}}[n]) \to \cdots
  \end{equation*}
  The 1-form $\alpha_{\widetilde{X}}$ defines a class in $\pi_0 \mathrm{Map}(\cO_{\widetilde{X}}, \LL_{\widetilde{X}}[n])$. Its image under $Y^\vee_*$ computes the self-pairing $\omega_{\widetilde{X}}(Y, Y)$. The specified null-homotopy $\omega_{\widetilde{X}}(Y, Y) \simeq 0$ forces $Y^\vee_*(\alpha_{\widetilde{X}}) = 0$. By exactness of the sequence, the kernel of $Y^\vee_*$ equals the image of $(\mathbb{T}p)^\vee_*$. This guarantees the existence of a lift of $\alpha_{\widetilde{X}}$ to a section in $\pi_0 \mathrm{Map}(\cO_{\widetilde{X}}, \mathbb{L}p^* \LL_X[n])$.
\vspace{5pt}

  \textbf{\textit{Step 3: Equivariant Descent.}}
  To establish that the 1-form $\alpha_{\widetilde{X}}$ descends to the quotient stack $X$, we must determine its equivariant structure. The stable $\infty$-category of perfect complexes on the geometric realization $X \simeq \varinjlim_{\Delta^{\mathrm{op}}} B_\bullet(\Gm, \widetilde{X})$ is equivalent to the stable $\infty$-category of $\Gm$-equivariant perfect complexes on $\widetilde{X}$. The $\Gm$-action lifts to the cotangent complex $\LL_{\widetilde{X}}$. 
  By the premise of the theorem, the symplectic form $\omega_{\widetilde{X}}$ is an eigenform of weight 1 with respect to this action. Since $\Gm$ is abelian, the fundamental morphism $Y$ inducing the action is invariant under the adjoint action, as it originates from the Lie algebra of $\Gm$, giving it a weight 0 equivariant structure. Since the equivalence $\Theta_\omega$ intertwines the equivariant structures up to a weight 1 twist, post-composing the weight 0 morphism $Y$ with $\Theta_\omega$ gives a 1-form $\alpha_{\widetilde{X}}$ with a weight 1 equivariant structure \cite[Section 1.2]{PTVV}.

  Because $\alpha_{\widetilde{X}}$ is both horizontal, as established in \textbf{\textit{Step 2}}, and invariant up to a weight 1 twist, it descends to the quotient stack. Let $\cL$ be the line bundle on $X$ associated to the weight 1 representation of $\Gm$. By the derived descent theory of $\Gm$-torsors, this $\Gm$-equivariant horizontal global section descends to a global section on $X$ taking values in this twisted representation. This yields the descended 1-form
  \begin{equation*}
    \alpha \in \pi_0 \mathbb{R}\Gamma(X, \LL_X \otimes^{\mathbb{L}}_{\cO_X} \cL[n]).
  \end{equation*}

\textbf{\textit{Step 4: Constructing the Contact Complex and Proving Non-Degeneracy.}}
  On the quotient stack $X$, the equivalence
  $\LL_X \otimes^{\mathbb{L}}_{\cO_X} \cL[n] \simeq
  \mathbb{R}\underline{\mathcal{H}om}(\TT_X, \cL[n])$ identifies
  the descended 1-form $\alpha$ with a contraction morphism
  $\alpha^\vee \colon \TT_X \to \cL[n]$
  within $L_{qcoh}(X)$. The derived contact distribution
  $\cK \in L_{qcoh}(X)$ is defined as the homotopy fiber of this map, yielding
  the stable fiber sequence 
  \begin{equation} \label{eq:contact_dist}
    \cK \to \TT_X \xrightarrow{\alpha^\vee} \cL[n]
  \end{equation}
  in $L_{qcoh}(X)$ \cite[Definition 3.5]{Berktav1}. 

  By pulling the fiber sequence \eqref{eq:contact_dist} back to
  $\widetilde{X}$ via $\mathbb{L}p^*$,
  the twisted line bundle $\mathbb{L}p^*\cL[n]$ trivializes to
  $\cO_{\widetilde{X}}[n]$.
  The derived pullback of $\cK$, denoted $\mathbb{L}p^*\cK$, fits
  into the fiber sequence
  \begin{equation*}
    \mathbb{L}p^*\cK \to \mathbb{L}p^*\TT_X
    \xrightarrow{\mathbb{L}p^*\alpha^\vee} \cO_{\widetilde{X}}[n].
  \end{equation*}

  The underlying symplectic form provides the equivalence
  $\Theta_\omega \colon \TT_{\widetilde{X}} \xrightarrow{\sim}
  \LL_{\widetilde{X}}[n]$. The orthogonal complement $Y^\perp$ is the
  homotopy fiber of the contraction map
  $\alpha_{\widetilde{X}}^\vee \colon \TT_{\widetilde{X}} \to
  \cO_{\widetilde{X}}[n]$. As established
  in \textbf{\textit{Step 2}}, the pairing evaluated on the fundamental morphism yields
  the path $\omega_{\widetilde{X}}(Y, Y) \simeq 0$. This path defines the
  null-homotopy $\alpha_{\widetilde{X}}^\vee \circ Y \simeq 0$. This datum lifts the morphism
  $Y$ to the homotopy fiber $Y^\perp$, showing $\cO_{\widetilde{X}}$
  as an isotropic object within the symplectic $\infty$-category and
  yielding the lift $\cO_{\widetilde{X}} \to Y^\perp$.

  The morphism $\alpha_{\widetilde{X}}^\vee$ factors as the composition $\mathbb{L}p^*\alpha^\vee \circ \mathbb{T}p$. 
  The homotopy fiber of $\TT p\colon \TT_{\widetilde{X}}\to \LL p^*\TT_{X}$ is the relative tangent complex $\TT_{\widetilde{X}/X}$ which is by \textbf{\textit{Step 1}} equivalent to $\cO_{\widetilde{X}}$. The homotopy fiber of $\alpha_{\widetilde{X}}^\vee$ is $Y^\perp$ by definition and the homotopy fiber of $\LL p^*\alpha^\vee\colon \LL p^* \TT_X \to \cO_{\widetilde{X}}[n]$ is the pullback of the contact distribution $\LL p^*\cK$. Applying Lemma \ref{lem:pasting_law} to $\LL p^* \alpha^\vee \circ \TT p$ yields the fiber sequence
  \begin{equation*}
    \cO_{\widetilde{X}}\to Y^\perp \to \LL p^*\cK   
  \end{equation*}
  making $\LL p^*\cK$ the homotopy cofiber of the isotropic lift $\cO_{\widetilde{X}}\to Y^\perp$.
  
  In the stable $\infty$-category of perfect complexes, the homotopy cofiber of an isotropic object lifted to
  its orthogonal complement possesses a non-degenerate pairing \cite[Section 1.1.2]{LurieHA}.
  The derived symplectic reduction of the isotropic morphism $Y$ guarantees the equivalence $\Theta_\omega$ descends to an equivalence
  $(\mathbb{L}p^*\cK) \xrightarrow{\sim} (\mathbb{L}p^*\cK)^\vee[n]$
  on the total space. 

  Within the mixed graded derived de Rham complex \cite[Section 1.4.1]{CPTVV}, the derived de Rham differential is natively encoded by the mixed structure $\epsilon$. Because the symplectic form $\omega_{\widetilde{X}}$ is an eigenform of weight 1, the $\Gm$-equivariant descent ensures that the mixed differential of the descended 1-form $\alpha$ recovers the symplectic form upon pullback. Therefore, the underlying 2-form of the derived de Rham differential $p^*(d_{dR}\alpha)$ is identified with $\omega_{\widetilde{X}}$.
  
  This equivariant non-degeneracy guarantees that the derived de Rham differential $d_{dR}\alpha$ induces the equivalence $\cK \xrightarrow{\sim} \cK^\vee \otimes^{\mathbb{L}}_{\cO_X} \cL[n]$ on the quotient $X$ after applying descent.
  
We then complete the proof of Theorem \ref{thm:derived transversality}, and hence that of Theorem \ref{thm:A}.
\end{proof}

\section{Derived Legendrian Intersection Theorem}\label{sec:legendrian}
Before establishing the intersection theorem, the categorical
relationship between Legendrians and the derived symplectification
functor must be formalized. 

Let us recall our setting and fix our notation: the \bfem{${n}$-shifted contact structure} on a
derived Artin stack $X$ is defined by a contact line bundle $\cL$, an
$n$-shifted $1$-form $\alpha$, and a non-degenerate contact
distribution \[\cK \simeq \mathrm{fib}(\TT_X \xrightarrow{\alpha^\vee}
\cL[n]) \ \text{ in } L_{qcoh}(X).\]

Following \cite[Definition 4.3 and Proposition 4.4]{Berktav1}, the \bfem{derived symplectification} of $X$, denoted by $\widetilde{X}$, is defined as the total space of the $\Gm$-bundle associated with the contact line bundle $\cL$. 
The projection $p \colon \widetilde{X} \to X$ classifies trivializations of $\cL$, forming a principal $\Gm$-bundle in $\dArt_\K$ \cite[Section 1.3.5]{HAG-II}. 
By \cite[Theorem 4.7]{Berktav1} for derived schemes and \cite[Theorem 3.9]{Berktav2} for derived Artin stacks, $\widetilde{X}$ is equipped with an $n$-shifted symplectic structure $\omega_{\widetilde{X}}$. 
This symplectic form is an eigenform of weight 1 with respect to the $\Gm$-action.

A \bfem{Legendrian morphism} $f \colon L \to X$ provides a specified
null-homotopy $f^*\alpha \simeq 0$ and induces a stable fiber sequence
\begin{equation*}
  \cO_L \to \LL_{L/X} \otimes^{\mathbb{L}}_{\cO_L} f^*\cL[n-1] \to \TT_L
\end{equation*}
of perfect complexes in $L_{qcoh}(L)$ (cf. Definition \ref{definition:legendrian structures}).

The key is that derived contact geometry links these
Legendrian structures to Lagrangian structures via $\Gm$-equivariant
lifts along the symplectification projection $p$.
This lifting property translates intersection problems in contact
moduli to equivariant intersection problems in symplectic moduli, and hence establishes the shifted contact structure on the derived
intersection of two Legendrian stacks. More specifically, we have: 

\begin{theorem}  \label{thm:Leg intersection}
  Let $X$ be an $n$-shifted contact derived Artin stack. Let $L_1$ and
  $L_2$ be two derived Artin stacks equipped with Legendrian morphisms
  \[\begin{tikzcd}
                    & L_2 \arrow[d, "f_2"] \\
L_1 \arrow[r, "f_1"] & X.                   
\end{tikzcd}\]
  Then the derived
  fiber product $Z := L_1 \times_X^h L_2$ carries an $(n-1)$-shifted
  contact structure.
\end{theorem}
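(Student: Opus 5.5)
The plan is to lift the intersection problem to the symplectification, apply the PTVV Lagrangian intersection theorem there, and then descend with Theorem \ref{thm:derived transversality}. Let $p\colon \widetilde{X}\to X$ be the derived symplectification. It is a principal $\Gm$-torsor, $\widetilde{X}$ carries an $n$-shifted symplectic form $\omega_{\widetilde{X}}$ of weight 1, and $\alpha_{\widetilde{X}}$ is $p^*\alpha$ trivialized by the tautological section of $p^*\cL$.

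\textbf{Step 1: lift the Legendrians to conical Lagrangians.} For each $i$, form the base change $\widetilde{L}_i := L_i\times^h_X \widetilde{X}$. This is the $\Gm$-torsor of trivializations of $f_i^*\cL$, and it comes with a $\Gm$-equivariant map $\widetilde{f}_i\colon \widetilde{L}_i\to\widetilde{X}$. The null-homotopy $f_i^*\alpha\simeq 0$ pulls back to a weight-1 equivariant null-homotopy $\widetilde{f}_i^*\alpha_{\widetilde{X}}\simeq 0$. Since $\omega_{\widetilde{X}}=d_{dR}\alpha_{\widetilde{X}}$ as established in Step 4 of the proof of Theorem \ref{thm:derived transversality}, applying $d_{dR}$ gives an isotropic structure $\widetilde{f}_i^*\omega_{\widetilde{X}}\simeq 0$.

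To show this structure is Lagrangian, I will compare cotangent sequences. Base change gives $\LL_{\widetilde{L}_i/\widetilde{X}}\simeq q_i^*\LL_{L_i/X}$, and the Atiyah sequence (Lemma \ref{lem:atiyah}) gives $\cO\to\TT_{\widetilde{L}_i}\to q_i^*\TT_{L_i}$. Pulling back the Legendrian fiber sequence $\cO_{L_i}\to \LL_{L_i/X}\otimes f_i^*\cL[n-1]\to\TT_{L_i}$ along $q_i$ trivializes $\cL$. Pasting the two triangles with Lemma \ref{lem:pasting_law} should identify $\Theta_h\colon\TT_{\widetilde{L}_i}\to\LL_{\widetilde{L}_i/\widetilde{X}}[n-1]$ as an equivalence, with the two copies of $\cO$ matched. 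This matching is the main obstacle: one must check that the connecting maps agree, i.e. that the $\cO$ term in the Legendrian sequence is exactly the fundamental vector field $Y$ transported by $\Theta_h$.

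\textbf{Step 2: intersect upstairs.} The Lagrangian intersection theorem of PTVV (Theorem 2.9 there) gives $\widetilde{Z}:=\widetilde{L}_1\times^h_{\widetilde{X}}\widetilde{L}_2$ an $(n-1)$-shifted symplectic structure $\omega_{\widetilde{Z}}$. It is built from the two null-homotopies of $\omega_{\widetilde{X}}$. Both null-homotopies are weight-1 equivariant and the maps are equivariant, so $\omega_{\widetilde{Z}}$ is a weight-1 eigenform for the diagonal $\Gm$-action.

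\textbf{Step 3: identify the quotient and descend.} Pasting pullback squares with Lemma \ref{lem:pullback_pasting} (the descent cube) gives $\widetilde{Z}\simeq Z\times^h_X\widetilde{X}$. The cube has faces $\widetilde{L}_i\to L_i$, $\widetilde{X}\to X$, and $\widetilde{Z}\to Z$, all of which are pullbacks. Hence $\widetilde{Z}\to Z$ is a principal $\Gm$-torsor and $[\widetilde{Z}/\Gm]\simeq Z$.

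Theorem \ref{thm:derived transversality} applied to $(\widetilde{Z},\omega_{\widetilde{Z}})$ then yields an $(n-1)$-shifted contact structure on $Z$. Its line bundle is $\cL|_Z$, and its contact form is the descent of $\iota_Y\omega_{\widetilde{Z}}$, which agrees with the difference of the two null-homotopies of $\alpha$ restricted to $Z$.

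The point needing the most care, besides the matching of $\cO$ summands in Step 1, is the weight-1 equivariance of $\omega_{\widetilde{Z}}$ in Step 2. I would handle it by performing the PTVV construction in $\Gm$-equivariant perfect complexes, i.e. on $B_\bullet(-,\Gm)$ levelwise, so that equivariance is built in rather than checked afterward.
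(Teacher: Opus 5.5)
Your proposal follows essentially the same route as the paper. You lift each $f_i$ to the $\Gm$-equivariant map $\widetilde f_i\colon\widetilde L_i=L_i\times^h_X\widetilde X\to\widetilde X$ and make it Lagrangian by comparing the Atiyah triangle with the pulled-back Legendrian triangle. You then apply the PTVV intersection theorem equivariantly to get a weight-1 $(n-1)$-shifted symplectic form on $\widetilde Z$, identify $\widetilde Z\simeq Z\times^h_X\widetilde X$ as a $\Gm$-torsor over $Z$ by base change, and descend via Theorem \ref{thm:derived transversality}.

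The matching of the $\cO$ terms you flag is exactly where the paper just asserts a map between the two extensions. One small correction: the identification $d_{dR}\alpha_{\widetilde X}\simeq\omega_{\widetilde X}$ in $\Acl(\widetilde X,n)$ that your isotropic structure needs comes from the symplectification (the paper's Liouville form $\lambda_{\widetilde X}$), not from Step 4 of Theorem \ref{thm:derived transversality}, which only identifies underlying 2-forms.
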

\vspace{0.1in}

\begin{proof}
  The proof proceeds in four steps utilizing the derived symplectification functor.

  \vspace{5pt}

  \textbf{\textit{Step 1: Equivariant lifting of the intersecting stacks.}}
  Let $\widetilde{X}$ be the derived symplectification of $X$. The domain stacks $\widetilde{L}_i := L_i \times_X^h \widetilde{X}$ are principal $\Gm$-bundles over $L_i$ via the projections $q_i \colon \widetilde{L}_i \to L_i$. The derived fiber product induces $\Gm$-equivariant morphisms $\widetilde{f}_i \colon \widetilde{L}_i \to \widetilde{X}$ in the homotopy pullback square
  \[
    \begin{tikzcd}
      \widetilde{L}_i \arrow[r, "\widetilde{f}_i"] \arrow[d, "q_i"'] &
      \widetilde{X} \arrow[d, "p"] \\
      L_i \arrow[r, "f_i"'] & X
    \end{tikzcd}
  \]
The derived symplectification $\widetilde{X}$ carries a Liouville 1-form $\lambda_{\widetilde{X}}$, equipped with an equivalence $d_{dR}\lambda_{\widetilde{X}} \simeq \omega_{\widetilde{X}}$ in $\Acl(\widetilde{X}, n)$. The projection $p \colon \widetilde{X} \to X$ yields the equivalence $p^*\alpha \simeq \lambda_{\widetilde{X}}$. 
The commutativity of the square induces $q_i^* f_i^* \alpha \simeq \widetilde{f}_i^* \lambda_{\widetilde{X}}$. Base change of the isotropic structure $f_i^*\alpha \simeq 0$ along $q_i$ defines a null-homotopy $\widetilde{f}_i^*\lambda_{\widetilde{X}} \simeq 0$ via the diagram
  \[
    \begin{tikzcd}
      0 \arrow[r, "\sim"] \arrow[d, equal] & q_i^* f_i^* \alpha
      \arrow[d, "\sim"] \\
      0 \arrow[r, "\sim"'] & \widetilde{f}_i^* \lambda_{\widetilde{X}}
    \end{tikzcd}
  \]
  Applying the derived de Rham differential gives a $\Gm$-equivariant symplectic null-homotopy
  \begin{equation*}
    h_i \colon 0 \sim \widetilde{f}_i^*\omega_{\widetilde{X}} \quad \text{in} \quad \Acl(\widetilde{L}_i, n)^{\Gm}.
  \end{equation*}

  An isotropic structure $h_i$ on $\widetilde{f}_i$ is Lagrangian if the morphism $\Theta_{\widetilde{f}_i} \colon \TT_{\widetilde{L}_i} \to \LL_{\widetilde{L}_i/\widetilde{X}}[n-1]$ is a quasi-isomorphism \cite[Definition 2.8]{PTVV}. By Definition \ref{definition:legendrian structures}, the Legendrian structure on $L_i \to X$ is witnessed by the stable fiber sequence
  \begin{equation*}
    \cO_{L_i} \to \LL_{L_i/X} \otimes^{\mathbb{L}}_{\cO_{L_i}} f_i^*\cL[n-1] \to \TT_{L_i}.
  \end{equation*}
  We pull this sequence back along the principal $\Gm$-bundle $q_i \colon \widetilde{L}_i \to L_i$.
  By definition of the symplectification, the pullback $q_i^*f_i^*\cL$ is canonically trivialized to $\cO_{\widetilde{L}_i}$.
  Therefore, the pulled-back Legendrian sequence simplifies to the exact triangle
  \begin{equation} \label{eq:legendrian_pullback}
    \cO_{\widetilde{L}_i} \to q_i^*\LL_{L_i/X}[n-1] \to q_i^*\TT_{L_i}.
  \end{equation}
  Because $q_i$ is a principal $\Gm$-bundle, its derived Atiyah sequence gives the exact triangle
  \begin{equation} \label{eq:atiyah_bundle}
    \cO_{\widetilde{L}_i} \to \TT_{\widetilde{L}_i} \to q_i^*\TT_{L_i}.
  \end{equation}
  A map between the extensions \eqref{eq:atiyah_bundle} and \eqref{eq:legendrian_pullback} induces the equivalence $\TT_{\widetilde{L}_i} \simeq q_i^*\LL_{L_i/X}[n-1]$
  \[
    \begin{tikzcd}
      \cO_{\widetilde{L}_i} \arrow[r] \arrow[d, equal] &
      \TT_{\widetilde{L}_i} \arrow[r] \arrow[d, "\simeq"] &
      q_i^*\TT_{L_i} \arrow[d, equal] \\
      \cO_{\widetilde{L}_i} \arrow[r] & q_i^*\LL_{L_i/X}[n-1] \arrow[r]
      & q_i^*\TT_{L_i}
    \end{tikzcd}
  \]
  Since the square is a homotopy pullback, the relative cotangent complexes satisfy the equivalence $\LL_{\widetilde{L}_i/\widetilde{X}} \simeq q_i^* \LL_{L_i/X}$ \cite[Lemma 1.4.1.16(2)]{HAG-II}. Substituting this identification gives the equivalence $\Theta_{\widetilde{f}_i} \colon \TT_{\widetilde{L}_i} \xrightarrow{\sim} \LL_{\widetilde{L}_i/\widetilde{X}}[n-1]$ in $L_{qcoh}(\widetilde{L}_i)$. The lifts $\widetilde{f}_i$ are then $\Gm$-equivariant Lagrangian morphisms.
  
%\begin{figure}[htbp]
    %\centering
    %\begin{tikzpicture}[scale=.5]
    %    \draw[thick] (0,0) ellipse (2.5 and 0.8);
     %   \node at (2.6, -0.5) {\small $X$};

     %   \draw[thick] (-1.5, -0.2) to[out=30, in=150] (1.5, 0.1);
     %   \node at (1.8, 0) {\small $L_i$};

      %  \draw[->, thick] (-3, 1.5) -- (-3, 0.5) node[midway, left] {\small $p$};

     %   \draw[dashed, gray] (-2.5, 0) -- (-2.5, 2.5);
      %  \draw[dashed, gray] (2.5, 0) -- (2.5, 2.5);
     %   \draw[gray] (0, 2.5) ellipse (2.5 and 0.8);
      %  \node at (3, 2) {\small $\widetilde{X}$};

     %   \filldraw[fill=red!20, draw=red, thick, opacity=0.8] 
     %       (-1.5, -0.2) to[out=30, in=150] (1.5, 0.1) -- 
      %      (1.5, 2.6) to[in=30, out=150] (-1.5, 2.3) -- cycle;
      %  \node at (-0.1, 1.8) {\small $\widetilde{L}_i$};

      %  \draw[->, thick] (0.5, 0.425) -- (0.5, 1.5) node[midway, right] {\small $\Gm$};
      %  \draw[->, thick] (-0.5, 0.28) -- (-0.5, 1.3);
    %\end{tikzpicture}
    %\caption{\small The Lagrangian lift $\widetilde{L}_i$ inside the derived symplectification $\widetilde{X}$ over the Legendrian $L_i$ in the contact base $X$. The fibers are generated by the weight 1 $\Gm$-action modeling the Liouville vector field.}
%\end{figure}
\vspace{5pt}

\textbf{\textit{Step 2: The Transgression Loop in Equivariant Mapping Spaces.}}
  The derived fiber product of the lifts is given by the homotopy pullback $\widetilde{Z} := \widetilde{L}_1 \times_{\widetilde{X}}^h \widetilde{L}_2$ \cite[Section 2.2.2]{HAG-II}. Let $p_1 \colon \widetilde{Z} \to \widetilde{L}_1$ and $p_2 \colon \widetilde{Z} \to \widetilde{L}_2$ be the projections, and let $q \colon \widetilde{Z} \to \widetilde{X}$ denote the composition. Since homotopy pullbacks are computed within the stable $\infty$-category of $\Gm$-equivariant stacks, $\widetilde{Z}$ inherits a $\Gm$-action.

  Recall that the $\Gm$-action on a derived stack induces an auxiliary fiber grading on the derived de Rham complex relative to $B\Gm$ \cite[Section 2.1.1]{Calaque}. A form homogeneous of weight 1 under the $\Gm$-action carries the grading $\{1\}$. 
  
  In our case, the symplectic form $\omega_{\widetilde{X}}$ has weight 1 and hence resides in the equivariant mapping space ${\Acl}^{\{1\}}(\widetilde{X}, n)$. The $\Gm$-equivariant lifts $\widetilde{f}_1$ and $\widetilde{f}_2$ pull back the isotropic structures into the equivariant category, placing $h_1$ and $h_2$ in ${\Acl}^{\{1\}}(\widetilde{L}_i, n)^{\Gm}$.

  The homotopy pullback square in the $\infty$-category of $\Gm$-equivariant stacks provides a $\Gm$-equivariant 2-morphism $\gamma \colon \widetilde{f}_1 \circ p_1 \simeq \widetilde{f}_2 \circ p_2$. Composing the null-homotopies $p_1^*h_1$ and $p_2^*h_2$ via $\gamma$ defines a based loop $\Lambda_{\widetilde{Z}}$ at the zero vertex within the space of weight 1 equivariant forms
  \begin{equation*}
    \Lambda_{\widetilde{Z}} := p_1^*h_1 \cdot \gamma \cdot (p_2^*h_2)^{-1} \in \Omega_0 {\Acl}^{\{1\}}(\widetilde{Z}, n)^{\Gm}.
  \end{equation*}

  By \cite[Definition 1.8]{PTVV}, the space of closed 2-forms is defined as the mapping space $\mathrm{Map}_{\epsilon-\mathrm{dg}_k}(\K(2)[-n-2], \DR(\widetilde{Z}/\K))$. The based loop space functor $\Omega_0$ operates by pre-composing with the suspension functor $\Sigma$ on the domain $\K(2)[-n-2]$ \cite[Section 1.1.2]{LurieHA}. The suspension functor shifts the cohomological degree to $\K(2)[-n-1]$. It operates only on the domain complex and does not interact with the target complex $\DR(\widetilde{Z}/\K)$ or the $\Gm$-equivariant fiber grading $\{1\}$. This establishes the equivalence of mapping spaces
  \begin{equation*}
    \Omega_0 {\Acl}^{\{1\}}(\widetilde{Z}, n)^{\Gm} \simeq {\Acl}^{\{1\}}(\widetilde{Z}, n-1)^{\Gm}.
  \end{equation*}
The transgressed form $\omega_{\widetilde{Z}}$ above resides in this target space. It hence inherently carries the $\{1\}$ grading and remains a weight 1 eigenform.

  \vspace{5pt}

  \textbf{\textit{Step 3: Verification of Non-Degeneracy.}}
  By the main theorem of shifted symplectic intersections \cite[Theorem 2.9]{PTVV}, the derived fiber product of Lagrangian morphisms carries an $(n-1)$-shifted symplectic structure. Establishing this non-degeneracy equivariantly requires verifying that the induced morphism $\Theta_{\omega_{\widetilde{Z}}} \colon \TT_{\widetilde{Z}} \xrightarrow{\sim} \LL_{\widetilde{Z}}[n-1]$ is an equivalence. 
  
  The homotopy pullback square gives a stable fiber sequence of relative tangent complexes
  \begin{equation*}
    \TT_{\widetilde{Z}/\widetilde{L}_2} \to \TT_{\widetilde{Z}} \to p_2^*\TT_{\widetilde{L}_2}
  \end{equation*}
  in $L_{qcoh}(\widetilde{Z})$ \cite[Section 1.2]{PTVV}. Derived base change along the projection $p_2$ gives the equivalence $\TT_{\widetilde{Z}/\widetilde{L}_2} \simeq p_1^*\TT_{\widetilde{L}_1/\widetilde{X}}$. Substituting this into the sequence we obtain
  \begin{equation*}
    p_1^*\TT_{\widetilde{L}_1/\widetilde{X}} \to \TT_{\widetilde{Z}} \to p_2^*\TT_{\widetilde{L}_2}.
  \end{equation*}
  Conversely, we consider the relative cotangent fiber sequence for the projection $p_1$
  \begin{equation*}
    p_1^*\LL_{\widetilde{L}_1}[n-1] \to \LL_{\widetilde{Z}}[n-1] \to \LL_{\widetilde{Z}/\widetilde{L}_1}[n-1].
  \end{equation*}
  Again, derived base change yields $\LL_{\widetilde{Z}/\widetilde{L}_1} \simeq p_2^*\LL_{\widetilde{L}_2/\widetilde{X}}$. The symplectic form $\omega_{\widetilde{X}}$ and the Lagrangian isotropic structures $h_1, h_2$ induce a morphism between these two fiber sequences. The Lagrangian condition on $\widetilde{L}_2$ requires that the induced map $\Theta_2 \colon \TT_{\widetilde{L}_2} \xrightarrow{\sim} \LL_{\widetilde{L}_2/\widetilde{X}}[n-1]$ is an equivalence in $L_{qcoh}(\widetilde{L}_2)$ \cite[Definition 2.8]{PTVV}. 
  
  Rotating the exact triangle for the Lagrangian immersion $\widetilde{L}_1 \to \widetilde{X}$ and applying the $\cO$-linear duality functor along with a cohomological shift shows the isotropic structure $h_1$ equivalently induces an isomorphism $\Theta_1^\vee[n-1] \colon \TT_{\widetilde{L}_1/\widetilde{X}} \xrightarrow{\sim} \LL_{\widetilde{L}_1}[n-1]$.
  
  Pulling these back to $\widetilde{Z}$ constructs the commutative diagram of exact triangles
  \[
    \begin{tikzcd}
      p_1^*\TT_{\widetilde{L}_1/\widetilde{X}} \arrow[r] \arrow[d, "p_1^*\Theta_1^\vee{[n-1]} \simeq"'] &
      \TT_{\widetilde{Z}} \arrow[r] \arrow[d, "\Theta_{\omega_{\widetilde{Z}}}"] &
      p_2^*\TT_{\widetilde{L}_2} \arrow[d, "p_2^*\Theta_2 \simeq"'] \\
      p_1^*\LL_{\widetilde{L}_1}[n-1] \arrow[r] &
      \LL_{\widetilde{Z}}[n-1] \arrow[r] &
      p_2^*\LL_{\widetilde{L}_2/\widetilde{X}}[n-1]
    \end{tikzcd}
  \]
  in $L_{qcoh}(\widetilde{Z})$. This diagram commutes up to the homotopies provided by the paths $h_1, h_2$, and the 2-cell $\gamma$. By the pasting law for stable fiber sequences (Lemma \ref{lem:pasting_law}), since the outer vertical morphisms are equivalences of perfect complexes, the induced central morphism $\Theta_{\omega_{\widetilde{Z}}}$ is an equivalence. Therefore, $\omega_{\widetilde{Z}}$ is a {\em $\Gm$-equivariant $(n-1)$-shifted symplectic form.}

  \vspace{5pt}

  \textbf{\textit{Step 4: Descent via Base Change.}}
  Let $\pi \colon \widetilde{Z} \to Z$ be the projection to the contact stack quotient $Z := L_1 \times_X^h L_2$. To apply Theorem \ref{thm:A}, the projection $\pi$ must be a principal $\Gm$-bundle over $Z$. The homotopy pullback defining $\widetilde{Z}$ evaluates as
  \begin{equation*}
    \widetilde{Z} := \widetilde{L}_1 \times_{\widetilde{X}}^h \widetilde{L}_2 \simeq (L_1 \times_X^h \widetilde{X}) \times_{\widetilde{X}}^h (L_2 \times_X^h \widetilde{X}).
  \end{equation*}
  By the associativity and commutativity of limits in $\dArt_\K$, this simplifies to
  \begin{equation*}
    \widetilde{Z} \simeq (L_1 \times_X^h L_2) \times_X^h \widetilde{X} \simeq Z \times_X^h \widetilde{X}.
  \end{equation*}
  
  The descent structure is given by the Cartesian cube. Denote by $q_1 \colon \widetilde{L}_1 \to L_1$ and $q_2 \colon \widetilde{L}_2 \to L_2$ the principal $\Gm$-bundles associated with the lifts. The commutativity of the Lagrangian morphisms $\widetilde{f}_i$ with the torsor projections is given by the base-change 2-cell equivalences $\beta_1$ and $\beta_2$. Here, the top face encodes the Lagrangian intersection equipped with the 2-morphism $\gamma \colon \widetilde{f}_1 \circ p_1 \simeq \widetilde{f}_2 \circ p_2$.
  \[
    \begin{tikzcd}[row sep=2.5em, column sep=2.5em]
      \widetilde{Z} \arrow[rr, "{p_2}"] \arrow[dd, "{p_1}"'] \arrow[dr, "{\pi}" description] 
      & & \widetilde{L}_2 \arrow[dd, "{\widetilde{f}_2}" near start] \arrow[dr, "{q_2}" description] \\
      & Z \arrow[rr, "{\pi_2}" near start, crossing over] \arrow[dd, "{\pi_1}" near start, crossing over] 
      & & L_2 \arrow[dd, "{f_2}"] \\
      \widetilde{L}_1 \arrow[rr, "{\widetilde{f}_1}"' near start] \arrow[dr, "{q_1}" description] 
      & & \widetilde{X} \arrow[dr, "{p}" description] \\
      & L_1 \arrow[rr, "{f_1}"'] 
      & & X
      \arrow[from=1-1, to=3-3, phantom, "{\simeq \gamma}" description]
      \arrow[from=2-2, to=4-4, phantom, "{\simeq \bar{\gamma}}" description]
      \arrow[from=3-1, to=4-4, phantom, "{\simeq \beta_1}" description]
      \arrow[from=1-3, to=4-4, phantom, "{\simeq \beta_2}" description]
    \end{tikzcd}
  \]

  Since $\widetilde{X} \to X$ is a $\Gm$-torsor, and torsors are stable under derived base change \cite[Section 1.3.5]{HAG-II}, the projection $\pi \colon \widetilde{Z} \to Z$ is a principal $\Gm$-bundle over $Z$. The contact line bundle on the intersection $Z$ is defined as $\cL_Z := \pi_1^* f_1^* \cL \simeq \pi_2^* f_2^* \cL$, where $\pi_i \colon Z \to L_i$ are the projections. Since $\widetilde{Z} \simeq Z \times_X^h \widetilde{X}$, we conclude that $\widetilde{Z}$ is the principal $\Gm$-bundle associated with the line bundle $\cL_Z$ on $Z$.

  By \textbf{\textit{Step 2}} and \textbf{\textit{Step 3}}, $(\widetilde{Z}, \omega_{\widetilde{Z}})$ is an $(n-1)$-shifted symplectic stack equipped with a weight 1 $\Gm$-action. Applying Theorem \ref{thm:A}, the stack quotient $Z \simeq \varinjlim_{\Delta^{\mathrm{op}}} B_\bullet(\Gm, \widetilde{Z})$ descends the data to an $(n-1)$-shifted contact structure on $Z$. The descent of the top-face coherence $\gamma$ along the torsor projections and lateral base-change equivalences $\beta_1, \beta_2$ uniquely defines the Legendrian 2-morphism $\bar{\gamma} \colon f_1 \circ \pi_1 \simeq f_2 \circ \pi_2$. 
  
  The fundamental vector field of the $\Gm$-action pairs with $\omega_{\widetilde{Z}}$ to generate the 1-form $\alpha_{\widetilde{Z}}$ on the total space. Equivariant descent along the torsor projection $\pi$ yields the 1-form $\alpha_Z \in \pi_0\mathbb{R}\Gamma(Z, \LL_Z \otimes^{\mathbb{L}}_{\cO_Z} \cL_Z[n-1])$. 
  
  The contact distribution $\cK_Z$ is then the homotopy fiber $\mathrm{fib}(\TT_Z \to \cL_Z[n-1])$. The differential $d_{dR}$ commutes with the quotient map $\pi$. The underlying 2-form of $d_{dR}\alpha_Z$ pulls back to $\omega_{\widetilde{Z}}$. From \textbf{\textit{Step 3}}, $\omega_{\widetilde{Z}}$ induces an equivalence $\TT_{\widetilde{Z}} \xrightarrow{\sim} \LL_{\widetilde{Z}}[n-1]$. Descent of this equivariant non-degeneracy establishes the equivalence $\cK_Z \xrightarrow{\sim} \cK_Z^\vee \otimes^{\mathbb{L}}_{\cO_Z} \cL_Z[n-1]$ in $L_{qcoh}(Z)$.

  This concludes the proof of Theorem \ref{thm:Leg intersection}, and consequently, also proves Theorem \ref{thm:B}.
\end{proof}

\section{Applications} \label{sec:moduli}
\subsection{\texorpdfstring{$(-1)-$}-Shifted Contact Structures via  Derived Legendrian Intersections}

In the case of smooth stacks, an immediate corollary of Theorem \ref{thm:Leg intersection} arises as follows.
\begin{corollary}
    Let $X$ be a smooth contact $\K$-stack, 
and 
$L, L' \subset X$ two smooth Legendrian substacks.
Then the derived fiber product 
\(
L \times_X^h L'
\)
carries a canonical $(-1)$-shifted contact structure.
\end{corollary}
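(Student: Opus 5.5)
The plan is to deduce the statement directly from Theorem \ref{thm:Leg intersection}, applied with $n=0$. The only work is to check that a smooth contact $\K$-stack with smooth Legendrian substacks satisfies the hypotheses of that theorem, in the sense of the derived definitions of Section \ref{sec:structures}.

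\textbf{Step 1: $X$ is a $0$-shifted contact derived Artin stack.} A smooth Artin $\K$-stack, viewed as a derived stack with trivial derived structure, has cotangent complex $\LL_X$ perfect of amplitude $[0,1]$. For a smooth scheme it is simply the locally free sheaf $\Omega^1_X$. Let $\xi\subset T_X$ be the classical contact distribution. It gives a line bundle $\cL := T_X/\xi$ and a twisted $1$-form $\alpha\in\Gamma(X,\Omega^1_X\otimes\cL)$. Then $\alpha^\vee\colon \TT_X\to\cL$ is surjective, so its homotopy fiber $\cK$ is the classical kernel $\xi$, and the fiber sequence \eqref{eq:contact_dist} holds with $n=0$. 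Classical maximal non-integrability says exactly that $d\alpha|_\xi$ induces $\xi\simeq\xi^\vee\otimes\cL$. This is the required equivalence $\cK\xrightarrow{\sim}\cK^\vee\otimes\cL[0]$. For a genuine stack I would verify this on a smooth atlas, where everything is étale local and descends.

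\textbf{Step 2: the inclusions are Legendrian in the sense of Definition \ref{definition:legendrian structures}.} Since $L$ is tangent to $\xi$, the restriction $f^*\alpha$ vanishes strictly, which gives a canonical null-homotopy. For a closed embedding of smooth stacks, $\LL_{L/X}\simeq N^\vee_{L/X}[1]$, so $\LL_{L/X}\otimes f^*\cL[-1]\simeq N^\vee_{L/X}\otimes f^*\cL$. The required sequence $\cO_L\to N^\vee_{L/X}\otimes f^*\cL\to T_L$ is then the dual, twisted by $f^*\cL$, of the classical sequence $0\to T_L\to f^*\xi\to\ldots$ combined with $f^*\xi\to f^*T_X\to f^*\cL$. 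Concretely:
\begin{itemize}
\item the Legendrian condition $\xi|_L/T_L\cong T_L^\vee\otimes\cL$ comes from $d\alpha$;
\item together with $N_{L/X}=f^*T_X/T_L$, this gives a short exact sequence $0\to T_L^\vee\otimes\cL\to N_{L/X}\to f^*\cL\to 0$;
\item dualizing and twisting by $f^*\cL$ yields $\cO_L\to N^\vee\otimes f^*\cL\to T_L$, as required.
\end{itemize}
The Remark following Definition \ref{definition:legendrian structures} already shows that this formulation is equivalent to the one via $f^*\cK$. I would therefore check the $f^*\cK$-version, $T_L\to f^*\xi\to T_L^\vee\otimes\cL$, which is literally the classical statement.

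\textbf{Step 3: conclude.} Theorem \ref{thm:Leg intersection} with $n=0$ gives an $(0-1)=(-1)$-shifted contact structure on $L\times^h_X L'$. Canonicity holds because every choice made above is canonical: the line bundle $\cL=T_X/\xi$, the strict vanishing null-homotopies, and the symplectification of Theorem \ref{thm:Leg intersection}.

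I expect the main obstacle to be Step 1 in the stacky case rather than for schemes. There $\TT_X$ has a degree $-1$ piece coming from stabilizers, so one must make precise what a classical contact structure on a smooth stack means. I would take the definition to be a contact structure on an atlas that is compatible with the groupoid. I would then argue that the fiber and non-degeneracy conditions are étale/smooth local and descend by the limit description of $L_{qcoh}$. If that becomes awkward, I would restrict to Deligne--Mumford stacks, where $\LL_X$ is still a vector bundle.
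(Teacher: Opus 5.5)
Your proposal is correct and follows the paper's route: the paper gives no separate argument and presents the statement as an immediate consequence of Theorem \ref{thm:Leg intersection} with $n=0$. Your Steps 1--2 check that a classical smooth contact structure and smooth Legendrian substacks satisfy the derived definitions of Section \ref{sec:structures}, using $\LL_{L/X}\simeq N^\vee_{L/X}[1]$ to produce the required sequence; this spells out hypotheses the paper leaves implicit, including the care you flag for non-Deligne--Mumford stacks.
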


We will now examine a special and interesting case:
\begin{example}[Derived discriminant locus]\label{example: ddislocus}
Given any smooth scheme $L$ over $\K$, every regular function $f\in \cO(L)$ gives a\textit{ Legendrian embedding} of $L$ into the 1-jet space $(J^1(L), \xi_{jet})$ via the map \cite{Geiges} $$F:L\longrightarrow \mathbb{A}^1_\K \times T^*L, \quad p \mapsto (f(p),d_{dR}f_p).$$ Here,  we have the identification $J^1(L)\simeq \mathbb{A}^1_\K \times T^*L$ such that the 1-jet space $J^1(L)$ carries a canonical (0-shifted) contact structure $\xi_{jet}$. Note in particular that the zero function $f\equiv0$ corresponds to the zero section $j^10 \simeq L \subset T^*L \subset T^*L \times \mathbb{A}^1_\K \simeq J^1(L)$ of  $J^1(L).$

Classically, we define the (ordinary) \textit{discriminant locus} 
to be the image of the intersection of the graph $\Gamma_F$ and the zero-section of the 1-jet bundle $J^1(L),$ which usually fails to be a smooth scheme due to the expected transversality and degeneracy issues.

We then consider the \bfem{derived discriminant locus} $\Delta\mathrm{loc}(f)$ of $f$ defined as the derived fiber product
 \[\begin{tikzcd}
 \Delta\mathrm{loc}(f):= L \times_{J^1(L)}^h L   \arrow[r]   \arrow[d]             & L \arrow[d,"{0}"] \\
L \arrow[r, "F"] & (J^1(L), \xi_{jet}).                    
\end{tikzcd}\]

Applying Theorem \ref{thm:Leg intersection} to the Legendrian morphisms $F: L\rightarrow (J^1(L), \xi_{jet}) \leftarrow L :{0}$, we conclude:
\begin{corollary}
The derived discriminant locus
$\Delta\mathrm{loc}(f)$ of a smooth function $f$ on a smooth scheme $L$ admits a $(-1)$-shifted contact structure.
   
\end{corollary}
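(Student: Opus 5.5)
The plan is to obtain the corollary as a direct specialization of Theorem \ref{thm:Leg intersection} with $n=0$, $X=(J^1(L),\xi_{jet})$, $f_1=F$ and $f_2=0$. This requires three inputs: that $J^1(L)$ is a $0$-shifted contact derived Artin stack in the sense of the paper's definition, and that both $F$ and the zero section are Legendrian in the sense of Definition \ref{definition:legendrian structures}.

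\textbf{Step 1: the contact structure on $J^1(L)$.} Set $J^1(L)\simeq \mathbb{A}^1_\K\times T^*L$ with coordinate $z$ on $\mathbb{A}^1_\K$. Take the trivial line bundle $\cL=\cO$ and the $1$-form $\alpha=dz-\lambda_{can}$, where $\lambda_{can}$ is the tautological $1$-form on $T^*L$. Since $J^1(L)$ is smooth, $\TT_{J^1(L)}$ is a vector bundle and $\alpha^\vee$ is surjective. Hence $\cK=\ker\alpha^\vee$ is a rank $2\dim L$ bundle, and $d_{dR}\alpha=-d\lambda_{can}$ restricts to a nondegenerate form on $\cK$. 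This is the classical statement, and in the smooth case it is exactly the required equivalence $\cK\simeq\cK^\vee$. Alternatively, one can see $J^1(L)$ as $[\widetilde X/\Gm]$ with $\widetilde X=T^*(L\times\Gm)$ restricted to the open locus where the fibre coordinate dual to $\Gm$ is nonzero, and then invoke Theorem \ref{thm:derived transversality}. I will use the direct description.

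\textbf{Step 2: Legendrian structures.} First I check isotropy. We have $F^*\alpha=df-F^*\lambda_{can}=df-df=0$ strictly, because the tautological form pulls back along the section $d_{dR}f$ to $df$. Similarly $0^*\alpha=0$. So the null-homotopies can be taken to be identities.

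For the fibre sequence of Definition \ref{definition:legendrian structures} with $n=0$, both maps are closed embeddings of smooth schemes of codimension $\dim L+1$. Therefore $\LL_{L/J^1(L)}\simeq N^\vee[1]$, where $N^\vee$ is the conormal bundle, and the required sequence becomes
\[\cO_L\to N^\vee\to \TT_L.\]
This is exactly the classical statement that, for a Legendrian $L\subset(M,\ker\alpha)$, the conormal bundle is an extension of $\TT_L\simeq (\cK|_L/\TT_L)$ by the line spanned by $\alpha|_L$. It follows from the sequences $\TT_L\to\cK|_L\to \LL_L$ (maximal isotropy, by dimension count) and $\cK|_L\to\TT_{J^1}|_L\to\cO$. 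By the Remark following Definition \ref{definition:legendrian structures}, this is equivalent to the condition there.

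\textbf{Step 3: conclusion.} Theorem \ref{thm:Leg intersection} now gives a $(0-1)=(-1)$-shifted contact structure on $\Delta\mathrm{loc}(f)=L\times^h_{J^1(L)}L$. The contact line bundle is the pullback of $\cL=\cO$, so it is trivial.

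The only step requiring care is Step 2, namely matching the classical Legendrian condition with the derived fibre sequence, including signs and the choice of the map $\cO_L\to N^\vee$ (given by $\alpha|_L$). Because everything in Step 2 is smooth and underived, this reduces to linear algebra of vector bundles, and I expect it to go through.
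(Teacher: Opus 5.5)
Your proposal is correct and follows the paper's route. The paper obtains the corollary simply by applying Theorem \ref{thm:Leg intersection} with $n=0$ to $F$ and the zero section, and you additionally verify the hypotheses it only asserts: the $0$-shifted contact structure on $J^1(L)$ and the Legendrian sequences $\cO_L\to N^\vee\to\TT_L$.

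The only slips are cosmetic. First, $\cK|_L/\TT_L$ is $\LL_L$, and it is its dual that gives $\TT_L$. Second, in your unused aside the symplectization of $J^1(L)$ should be $T^*(L\times\mathbb{A}^1)$ with the fibre coordinate dual to $\mathbb{A}^1$ inverted, not $T^*(L\times\Gm)$.
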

 \end{example}   
 
\subsection{Projective Higgs Bundles and Legendrian Intersections}
Let $X$ be a smooth proper variety of dimension $d$ over $\K$ and let $G$ be a reductive algebraic group. The derived moduli stack of principal $G$-bundles $\mathrm{Bun}_G(X)$ is a derived Artin stack. 
By Serre duality, the cotangent complex $\LL_{\mathrm{Bun}_G(X)}$ at a point $P$ is equivalent to $R\Gamma(X, \mathrm{ad}(P)^* \otimes \omega_X)[d-1]$. To corepresent classical $\omega_X$-valued Higgs fields $\theta \in H^0(X, \mathrm{ad}(P)^* \otimes \omega_X)$ in cohomological degree 0, the cotangent stack must be shifted by $1-d$. Thus the derived moduli space of $G$-Higgs bundles on $X$ is the $(1-d)$-shifted cotangent stack $T^*[1-d]\mathrm{Bun}_G(X)$. Hence it carries a canonical $(1-d)$-shifted symplectic structure.

The multiplicative group $\Gm$ acts on $T^*[1-d]\mathrm{Bun}_G(X)$ by scaling the cotangent fibers. The fundamental weight 1 condition holds. Let $T^*[1-d]\mathrm{Bun}_G(X)^\circ$ denote the open substack where the Higgs field vanishes nowhere. Then Theorem \ref{thm:derived transversality} implies:

\begin{corollary} \label{cor:projective_higgs}
  The derived moduli stack of projective Higgs bundles
  \[
    P\mathrm{Higgs}_G(X) := [T^*[1-d]\mathrm{Bun}_G(X)^\circ / \Gm]
  \]
  admits a canonical $(1-d)$-shifted contact structure.
\end{corollary}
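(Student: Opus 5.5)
The plan is to deduce the statement directly from Theorem \ref{thm:derived transversality}. Three inputs are needed: an $n$-shifted symplectic derived Artin stack with $n=1-d$, a $\Gm$-action on it, and the weight 1 condition for the symplectic form.

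First I set $\widetilde{X} := T^*[1-d]\mathrm{Bun}_G(X)$. By Theorem \ref{thm:calaque}, $\widetilde{X}\simeq\mathbb{R}\mathbf{Spec}_{\mathrm{Bun}_G(X)}(\mathbb{L}\mathrm{Sym}(\TT_{\mathrm{Bun}_G(X)}[d-1]))$ carries a canonical $(1-d)$-shifted symplectic form $\omega$. This form is the de Rham differential of the tautological Liouville $1$-form $\lambda$.

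The substack $T^*[1-d]\mathrm{Bun}_G(X)^\circ$ is the complement of the zero section. It is open, so its cotangent complex is the restriction of $\LL_{\widetilde{X}}$, and $\omega$ restricts to a $(1-d)$-shifted symplectic form $\omega^\circ$ on it. Removing the zero section is exactly what makes the $\Gm$-action free, as I explain below.

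Second, I define the $\Gm$-action by giving the generator $\TT_{\mathrm{Bun}_G(X)}[d-1]$ of the relative $\mathbb{L}\mathrm{Sym}$-algebra weight $1$. This gives a grading on the structure sheaf over $\mathrm{Bun}_G(X)$, i.e.\ fiberwise scaling of covectors. The zero section is the fixed locus, so $\Gm$ preserves the open complement and acts freely there. Hence the projection $\widetilde{X}^\circ\to[\widetilde{X}^\circ/\Gm]$ is a principal $\Gm$-torsor, as required in Step 1 of the proof of Theorem \ref{thm:derived transversality}.

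Third, I verify the weight 1 condition $\rho^*\omega\simeq z\cdot\pi^*\omega$. The Liouville form $\lambda$ is linear in the fiber coordinates, so it is homogeneous of weight 1: locally it is $\sum_i p_i\,dq^i$ with the $p_i$ of weight 1. The pullback $\rho^*$ and the de Rham differential are compatible with this weight grading, since the grading is an auxiliary grading on $\DR(\widetilde{X})$ relative to $B\Gm$. It follows that $\omega=d_{dR}\lambda$ is a weight 1 eigenform. The same holds after restricting to $\widetilde{X}^\circ$, because restriction to an invariant open substack is equivariant.

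The step I expect to be the main obstacle is making this weight computation coherent. The difficulty is showing that the closed $2$-form, with all its higher components $\omega_q$ in the mixed complex, is weight 1 as a point of $\Acl(\widetilde{X},1-d)$, and not merely that its underlying $2$-form is. I would handle this by working in the category of $\Gm$-equivariant graded mixed complexes on the base. There I would use that Calaque's construction of $\omega$ is functorial, obtained via $d_{dR}$ from the tautological section. This makes it natural for the weight grading, so that $\omega$ is the image of a weight 1 class.

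With these three inputs in place, Theorem \ref{thm:derived transversality} applies and gives a $(1-d)$-shifted contact structure on $P\mathrm{Higgs}_G(X)$. The contact line bundle $\cL$ is the one associated to the weight 1 character, i.e.\ $\cO(1)$. The structure is canonical because every input (Calaque's form and the scaling action) is canonical.
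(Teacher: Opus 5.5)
Your argument is the paper's: the corollary follows by feeding three inputs into Theorem \ref{thm:derived transversality}. These are Calaque's canonical $(1-d)$-shifted form on $T^*[1-d]\mathrm{Bun}_G(X)$, restricted to a $\Gm$-invariant open substack, the fiber-scaling action, and the weight $1$ condition. Your homogeneity argument for the tautological $1$-form just spells out the paper's one-line assertion that this weight condition holds.

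One claim is false and should be dropped. The $\Gm$-action is not free off the zero section, and the zero section is not the whole fixed locus, unless $G$ is a torus. For example, take $G=SL_2$ on a curve, $E=K^{1/2}\oplus K^{-1/2}$, and the nowhere-vanishing nilpotent Higgs field $\theta$ given by $K^{1/2}\xrightarrow{\mathrm{id}}K^{-1/2}\otimes K$. Then $\mathrm{diag}(t,t^{-1})$ with $t^{-2}=\lambda$ gives $(E,\theta)\cong(E,\lambda\theta)$. So this point acquires a $\Gm$ in its stabilizer in the quotient. The fundamental vector field is also null-homotopic there, since $\theta=[s,\theta]$ for $s=\mathrm{diag}(-\tfrac12,\tfrac12)$.

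This does not damage the deduction. The map $\widetilde{X}^\circ\to[\widetilde{X}^\circ/\Gm]$ is a principal $\Gm$-torsor for any action, being the base change of $\mathrm{pt}\to B\Gm$. That is all Step 1 of the theorem uses, and the theorem as stated does not assume freeness.
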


We restrict to the case $d=1$, where $C$ is a smooth proper curve. Substituting $d=1$ into Corollary \ref{cor:projective_higgs} yields the following specialization.

\begin{corollary} \label{cor:curve_higgs}
  The derived moduli stack of projective Higgs bundles $P\mathrm{Higgs}_G(C)$ admits a canonical $0$-shifted contact structure.
\end{corollary}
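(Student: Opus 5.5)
The plan is to specialize Corollary \ref{cor:projective_higgs} to $X = C$ a smooth proper curve, so $d = 1$ and the shift $1-d$ equals $0$. Concretely, I would run the argument of Corollary \ref{cor:projective_higgs} again with $d=1$, checking each hypothesis of Theorem \ref{thm:derived transversality} along the way.

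\textbf{Step 1: the symplectic input.} For a curve, Serre duality gives
\[
\LL_{\mathrm{Bun}_G(C)}|_P \simeq R\Gamma(C, \mathrm{ad}(P)^*\otimes \omega_C),
\]
with no shift. Hence the derived moduli of $G$-Higgs bundles is the unshifted cotangent stack $T^*\mathrm{Bun}_G(C) = T^*[0]\mathrm{Bun}_G(C)$. By Calaque's Theorem \ref{thm:calaque}, it carries a canonical $0$-shifted symplectic structure $\omega = d_{dR}\lambda$, where $\lambda$ is the tautological Liouville $1$-form.

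\textbf{Step 2: the $\Gm$-action.} $\Gm$ acts on $T^*\mathrm{Bun}_G(C)$ by fiberwise scaling, i.e. on the generators $\TT_{\mathrm{Bun}_G(C)}$ of $\mathbb{L}\mathrm{Sym}$ with weight $1$. The tautological form $\lambda$ is linear in the fiber coordinate, so $\rho^*\lambda \simeq z\cdot \pi^*\lambda$. Applying $d_{dR}$, which commutes with the equivariant structure, gives $\rho^*\omega \simeq z\cdot\pi^*\omega$. This is the derived Liouville condition.

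\textbf{Step 3: restriction to the nonvanishing locus.} The open substack $T^*\mathrm{Bun}_G(C)^\circ$ is $\Gm$-stable. Restricting $\omega$ to it preserves closedness, nondegeneracy and weight, since open immersions are étale and cotangent complexes restrict.

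\textbf{Step 4: conclusion.} Theorem \ref{thm:derived transversality} now applies with $n=0$, and yields a $0$-shifted contact structure on $[T^*\mathrm{Bun}_G(C)^\circ/\Gm] = P\mathrm{Higgs}_G(C)$. The contact line bundle $\cL$ is the one associated to the weight $1$ character.

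\textbf{Main obstacle.} The only delicate point is the freeness needed for $p\colon \widetilde{X}\to X$ to be a principal $\Gm$-torsor, which Corollary \ref{cor:fundamental_morphism} requires. Removing the zero section is exactly what is meant to guarantee this. I would check it via the shear map $\Gm\times\widetilde{X}\to\widetilde{X}\times_X\widetilde{X}$. If that is troublesome, I would instead interpret the quotient as the geometric realization in Definition \ref{def:derived_quotient_stack}, and note that $\widetilde{X}\to[\widetilde{X}/\Gm]$ is tautologically a $\Gm$-torsor. That is how the proof of Theorem \ref{thm:derived transversality} already uses it, so no additional freeness beyond what Corollary \ref{cor:projective_higgs} assumes is needed.
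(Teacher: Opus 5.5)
Your proposal is correct and follows the paper's argument: the corollary is just Corollary \ref{cor:projective_higgs} with $d=1$, i.e.\ Theorem \ref{thm:derived transversality} with $n=0$ applied to the fiber-scaling action on $T^*[0]\mathrm{Bun}_G(C)^\circ$, and your fallback that $\widetilde{X}\to[\widetilde{X}/\Gm]$ is tautologically a $\Gm$-torsor is exactly how the paper's proof of that theorem uses it (removing the zero section does not make the action free on the stack, since gauge-fixed nilpotent Higgs bundles have nontrivial stabilizer, but freeness is never needed). One small correction in Step 2: because $d_{dR}(z\cdot\pi^*\lambda)=dz\wedge\pi^*\lambda+z\,\pi^*\omega$, the weight-one condition is not a literal identity $\rho^*\omega\simeq z\cdot\pi^*\omega$ of forms on $\Gm\times\widetilde{X}$; read it relative to $\Gm$ (equivalently $t^*\omega=t\,\omega$ for each $t$, or $\mathcal{L}_Y\omega=\omega$), and then your deduction from $\rho^*\lambda=z\cdot\pi^*\lambda$ goes through.
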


This ambient space serves as the geometric foundation to evaluate negative-shifted derived intersections.
Consider the global nilpotent cone $\mathcal{N} \subset T^*[0]\mathrm{Bun}_G(C)$, which is the central fiber of the Hitchin fibration. The nilpotency condition is invariant under the $\Gm$-scaling action. The projectivization of an irreducible component of the nilpotent cone, denoted $\mathbb{P}\mathcal{N}_i$, descends to a well-defined substack in $P\mathrm{Higgs}_G(C)$. By \cite[Theorem 6.9]{BenBassat}, the generic smooth part of the reduced nilpotent cone is an isotropic substack of the $0$-shifted symplectic Higgs moduli space. Since its dimension is half the dimension of the ambient space, it forms a Lagrangian substack. Therefore this inclusion defines a Legendrian morphism into the $0$-shifted contact stack.

\begin{corollary} \label{cor:nilpotent_intersection}
Let $\mathbb{P}\mathcal{N}_1$ and $\mathbb{P}\mathcal{N}_2$ be Legendrian substacks in $P\mathrm{Higgs}_G(C)$ associated with the projectivizations of two distinct irreducible components of the global nilpotent cone. Their derived fiber product
\[
    \mathbb{P}\mathcal{N}_1 \times_{P\mathrm{Higgs}_G(C)}^h \mathbb{P}\mathcal{N}_2
\]
carries a canonical $(-1)$-shifted contact structure.
\end{corollary}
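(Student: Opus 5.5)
The plan is to deduce the statement directly from Theorem \ref{thm:Leg intersection}, applied with $n = 0$ to the ambient contact stack of Corollary \ref{cor:curve_higgs}. Once we know that each inclusion $\mathbb{P}\mathcal{N}_i \to P\mathrm{Higgs}_G(C)$ is a Legendrian morphism in the sense of Definition \ref{definition:legendrian structures}, Theorem \ref{thm:Leg intersection} endows the derived fiber product with an $(n-1) = (-1)$-shifted contact structure. Canonicity should then follow because every input is canonical: the Liouville form on $T^*[0]\mathrm{Bun}_G(C)$, the scaling action, and the null-homotopies restricted from the conical Lagrangians. So the work splits into three steps: verifying the ambient structure, lifting the Legendrian condition from the Lagrangian condition upstairs, and invoking the intersection theorem.

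First, identify the $\Gm$-torsor $T^*[0]\mathrm{Bun}_G(C)^\circ \to P\mathrm{Higgs}_G(C)$ with the derived symplectification of the contact structure from Corollary \ref{cor:curve_higgs}. Its contact line bundle $\cL$ is the one attached to the weight $1$ character. Second, let $\widetilde{\mathcal{N}}_i := \mathcal{N}_i \cap T^*[0]\mathrm{Bun}_G(C)^\circ$. This is $\Gm$-stable, since nilpotency is scaling invariant, and $\widetilde{\mathcal{N}}_i \simeq \mathbb{P}\mathcal{N}_i \times^h_{P\mathrm{Higgs}} T^*[0]\mathrm{Bun}_G(C)^\circ$. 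By \cite[Theorem 6.9]{BenBassat} and the dimension count, $\widetilde{\mathcal{N}}_i$ is Lagrangian, at least on its generic smooth part. For the isotropic structure I would use the canonical one: the Liouville $1$-form $\lambda$ restricts to zero on any conical isotropic substack, because $\lambda = \iota_Y\omega$ and $Y$ is tangent to $\widetilde{\mathcal{N}}_i$. This gives a $\Gm$-equivariant null-homotopy of $\lambda|_{\widetilde{\mathcal{N}}_i}$, hence of $p^*\alpha$, and it descends to a null-homotopy of $\alpha|_{\mathbb{P}\mathcal{N}_i}$ along the torsor.

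Third, prove the Legendrian fiber sequence on $\mathbb{P}\mathcal{N}_i$ by running Step 1 of the proof of Theorem \ref{thm:Leg intersection} backwards. The Lagrangian equivalence $\TT_{\widetilde{\mathcal{N}}_i} \simeq \LL_{\widetilde{\mathcal{N}}_i/\widetilde{X}}[-1]$ is $\Gm$-equivariant. Base change gives $\LL_{\widetilde{\mathcal{N}}_i/\widetilde{X}} \simeq q^*\LL_{\mathbb{P}\mathcal{N}_i/X}$. The Atiyah sequence of Lemma \ref{lem:atiyah} for $q$ supplies the extension $\cO \to \TT_{\widetilde{\mathcal{N}}_i} \to q^*\TT_{\mathbb{P}\mathcal{N}_i}$. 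Combining these yields an equivariant exact triangle $\cO \to q^*(\LL_{\mathbb{P}\mathcal{N}_i/X}\otimes \cL[-1]) \to q^*\TT_{\mathbb{P}\mathcal{N}_i}$ with the weight $1$ twist. Since equivariant perfect complexes on $\widetilde{\mathcal{N}}_i$ are equivalent to perfect complexes on $\mathbb{P}\mathcal{N}_i$, this triangle descends to the required sequence. With both Legendrian morphisms in hand, Theorem \ref{thm:Leg intersection} finishes the proof.

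The main obstacle is the second step. \cite[Theorem 6.9]{BenBassat} gives isotropy and Lagrangianity only on the generic smooth locus of the reduced component, whereas Theorem \ref{thm:Leg intersection} needs a Legendrian morphism from a derived Artin stack. I would handle this by taking $\mathbb{P}\mathcal{N}_i$ to mean the projectivization of that smooth open part (minus the zero section). On that smooth locus the Lagrangian condition reduces to the classical half-dimension isotropy criterion, because both sides of $\Theta$ are then vector bundles in a single degree and a pointwise injectivity check suffices.
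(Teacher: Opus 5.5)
Your proposal is correct and follows the same route as the paper. The paper likewise treats the inclusions $\mathbb{P}\mathcal{N}_i \to P\mathrm{Higgs}_G(C)$ as Legendrian because their $\Gm$-stable lifts in $T^*[0]\mathrm{Bun}_G(C)^\circ$ are Lagrangian (Ben-Bassat's isotropy result plus half-dimensionality on the generic smooth part), and then applies Theorem \ref{thm:Leg intersection} with $n=0$. Your backward run of Step 1 and your explicit restriction to the smooth locus of each component spell out what the paper asserts in one line.

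One correction to your last paragraph. $\widetilde{\mathcal{N}}_i$ is an Artin stack whose points have infinitesimal automorphisms, so $\TT_{\widetilde{\mathcal{N}}_i}$ and $\LL_{\widetilde{\mathcal{N}}_i/\widetilde{X}}[-1]$ are not bundles in a single degree. The half-dimension criterion still works, but degree by degree on derived fibers:
\begin{itemize}
\item for a locally closed immersion into the $0$-shifted symplectic $\widetilde{X}$, $\Theta$ is automatically an isomorphism on $H^{-1}$;
\item both sides vanish in $H^1$;
\item on $H^0$ it is an injection between spaces of equal dimension, once you use the virtual dimension $2\dim \mathrm{Bun}_G(C)$ of the ambient.
\end{itemize}
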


\begin{proof}
The inclusions $f_i \colon \mathbb{P}\mathcal{N}_i \to P\mathrm{Higgs}_G(C)$ are Legendrian morphisms into a $0$-shifted contact stack. The homotopy pullback of their $\Gm$-equivariant Lagrangian lifts in the derived symplectification $T^*[0]\mathrm{Bun}_G(C)^\circ$ yields a $\Gm$-equivariant $(-1)$-shifted symplectic structure of weight 1. By Theorem 4.1, the descent of this structure along the principal $\Gm$-bundle quotient defines the $(-1)$-shifted contact structure on the intersection.
\end{proof}

\subsection{\texorpdfstring{$\ell$}{l}-adic Local Systems and the Contact Mapping Torus}

We thank the anonymous referee for suggesting this direction to promote a contact moduli space from arithmetic geometry.
Let $X$ be a smooth proper scheme of dimension $d$ defined over a finite field $\mathbb{F}_q$. Let $X_{\bar{\mathbb{F}}_q}$ denote its base change to the algebraic closure. The \bfem{derived moduli stack of framed continuous $\ell$-adic representations of the \'{e}tale fundamental group}, denoted $LocSys_{\ell,n}^{fr}(X_{\bar{\mathbb{F}}_q})$, is representable by a derived $\mathbb{Q}_\ell$-analytic space \cite[Theorem 1.1]{Antonio}. 

Recall that the tangent complex at a local system $L$ is computed by the \'{e}tale cohomology complex $R\Gamma_{et}(X_{\bar{\mathbb{F}}_q}, \mathrm{ad}(L))[1]$. Through \'{e}tale Poincar\'{e} duality and the trace map $\mathrm{tr} \colon H^{2d}_{et}(X_{\bar{\mathbb{F}}_q}, \mathbb{Q}_\ell(d)) \to \mathbb{Q}_\ell$, this moduli space inherits a $(2-2d)$-shifted symplectic structure \cite[Section 5]{Antonio}. Because we consider framed local systems, this symplectic form is exact, admitting a derived Liouville 1-form $\theta$ such that $d_{dR}\theta = \omega$ \cite[Section 2]{Calaque}.

The geometric Frobenius automorphism $F \in \mathrm{Gal}(\bar{\mathbb{F}}_q/\mathbb{F}_q)$ acts globally on $LocSys_{\ell,n}^{fr}(X_{\bar{\mathbb{F}}_q})$. The action of $F$ on the symplectic form is determined by its action on the \'{e}tale orientation class $\mathbb{Q}_\ell(d)$, which scales by the scalar $q^d$. Thus, the pullback of the geometric data scales uniformly by a constant:
\begin{equation*}
    F^* \omega = q^d \omega \quad \text{and} \quad F^*\theta = q^d \theta.
\end{equation*}

Because the arithmetic Frobenius generates a discrete group $\mathbb{Z}$, the direct quotient space preserves the even parity of the tangent complex, yielding a ``locally conformally symplectic stack''. To construct a shifted contact structure from this discrete arithmetic dilation, we pass to the \bfem{derived contact mapping torus} (the ``contactization").

Consider the product stack $Y = LocSys_{\ell,n}^{fr}(X_{\bar{\mathbb{F}}_q}) \times \mathbb{A}^1[2-2d]$, where $\tau$ is the coordinate on $\mathbb{A}^1[2-2d]$. The 1-form $\alpha = d_{dR}\tau + \theta$ defines a strict $(2-2d)$-shifted contact structure on $Y$, whose derived symplectification $\widetilde{Y} = Y \times \Gm$ carries the symplectic form $$\widetilde{\omega} = ds \wedge d_{dR}\tau + ds \wedge \theta + s \omega.$$
We then define a $\mathbb{Z}$-action on $Y$ where the generator acts by 
\begin{equation*}
    1 \cdot (x, \tau) = (F(x), q^d \tau).
\end{equation*}
Evaluating the pullback of the contact form under this $\mathbb{Z}$-action yields:
\begin{equation*}
    1^* \alpha = d_{dR}(q^d \tau) + F^*\theta = q^d d_{dR}\tau + q^d \theta = q^d \alpha.
\end{equation*}
Because the contact form $\alpha$ scales by the constant $q^d$, the contact distribution $\cK = \mathrm{fib}(\TT_Y \xrightarrow{\alpha^\vee} \cO_Y[2-2d])$ is strictly invariant under the $\mathbb{Z}$-action. 

Therefore, the geometric data descends to the quotient stack. The scaled form $\alpha$ maps to a section twisted by the flat line bundle $\cL$ determined by the character $1 \mapsto q^d$.

\begin{corollary} \label{cor: l-adic local sys}
    The derived contact mapping torus defined by the discrete arithmetic Frobenius scaling
    \begin{equation*}
        [(LocSys_{\ell,n}^{fr}(X_{\bar{\mathbb{F}}_q}) \times \mathbb{A}^1[2-2d]) / \mathbb{Z}]
    \end{equation*}
    inherits a canonical $(2-2d)$-shifted contact structure.
\end{corollary}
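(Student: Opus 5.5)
The corollary asks that the mapping torus $[Y/\mathbb{Z}]$, with $Y = LocSys_{\ell,n}^{fr}(X_{\bar{\mathbb{F}}_q}) \times \mathbb{A}^1[2-2d]$, carry a $(2-2d)$-shifted contact structure. The plan is to build the contact structure on $Y$ first and then descend it along the quotient by the discrete group $\mathbb{Z}$. Theorem \ref{thm:derived transversality} is not the right tool, since $\mathbb{Z}$ is not $\Gm$. Instead we use étale descent along the $\mathbb{Z}$-torsor $Y \to [Y/\mathbb{Z}]$. This map is étale, because $\mathbb{Z}$ is a discrete constant group, so $\LL_{Y/[Y/\mathbb{Z}]}\simeq 0$. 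Consequently the tangent complex of the quotient pulls back to $\TT_Y$, and perfect complexes on $[Y/\mathbb{Z}]$ are the same as $\mathbb{Z}$-equivariant perfect complexes on $Y$ (Definition \ref{def:equivariant_sheaves}, applied to $B_\bullet(Y,\mathbb{Z})$).

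\textbf{Step 1: contact structure on $Y$.} Take $\cL_Y=\cO_Y$ and $\alpha = d_{dR}\tau + \theta$. Since $\TT_Y \simeq \TT_{LocSys}\boxplus \cO[2d-2]$, the contraction $\alpha^\vee$ is the identity on the $\tau$-summand (up to shift) plus $\theta^\vee$. Its fiber $\cK$ is therefore identified with $\TT_{LocSys}$ via the graph of $-\theta^\vee$. Moreover $d_{dR}\alpha = \omega$ restricts on $\cK$ to $\Theta_\omega$, which is an equivalence $\cK\simeq \cK^\vee[2-2d]$. An alternative route is to verify that $\widetilde\omega$ on $Y\times\Gm$ is symplectic of weight $1$ and invoke Theorem \ref{thm:derived transversality}; I would use this only as a cross-check.

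\textbf{Step 2: equivariant structure.} Let $\chi\colon \mathbb{Z}\to \Gm(\mathbb{Q}_\ell)$ be the character $1\mapsto q^d$, and let $\cL$ be the associated flat line bundle on $[Y/\mathbb{Z}]$. The identity $1^*\alpha = q^d\alpha$ says that $\alpha$ is a $\mathbb{Z}$-equivariant section of $\LL_Y\otimes\chi[2-2d]$. The identity $F^*\omega=q^d\omega$ says that $d_{dR}\alpha$ is likewise $\chi$-equivariant. To turn these into coherent equivariance data one uses that $\mathbb{Z}$ is free on one generator: for a $\mathbb{Z}$-object, the simplicial limit reduces to a single automorphism together with a homotopy. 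The identities $F^*\theta\simeq q^d\theta$, with a specified homotopy, therefore already determine the full descent datum.

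\textbf{Step 3: descent.} Descend $\alpha$ to $\bar\alpha\in\pi_0\mathbb{R}\Gamma([Y/\mathbb{Z}],\LL\otimes\cL[2-2d])$. Define $\bar\cK=\mathrm{fib}(\TT\to\cL[2-2d])$; its pullback to $Y$ is $\cK$ by exactness of pullback. Being an equivalence is étale-local, so the pairing $\bar\cK\to\bar\cK^\vee\otimes\cL[2-2d]$ is an equivalence because it pulls back to the one in Step 1.

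The main obstacle is Step 2, specifically the homotopy-coherent equivariance of $\theta$. The identity $F^*\theta=q^d\theta$ is asserted only through the action on $\mathbb{Q}_\ell(d)$, and promoting it to a specified homotopy compatible with $F^*\omega\simeq q^d\omega$ needs care. I would first try to build $\theta$ canonically from the framing, for instance as a transgression, so that Frobenius-naturality is automatic. I would also check that the $\mathbb{Q}_\ell$-analytic setting of \cite{Antonio} admits the descent formalism used above.
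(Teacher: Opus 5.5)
Your proposal follows the paper's own argument: the contact form $\alpha = d_{dR}\tau + \theta$ on $Y$, the identity $1^*\alpha = q^d\alpha$, and descent along $Y \to [Y/\mathbb{Z}]$ with $\alpha$ becoming a section twisted by the flat line bundle of the character $1 \mapsto q^d$. Your étale-descent bookkeeping (a single homotopy suffices since $\mathbb{Z}$ is free, and non-degeneracy is checked after pullback) makes explicit what the paper compresses into ``the geometric data descends.'' Like the paper, you take $d_{dR}\theta=\omega$ and $F^*\theta = q^d\theta$ as given inputs. One small slip: with the paper's conventions $\TT_{\mathbb{A}^1[2-2d]} \simeq \cO[2-2d]$ (it is the cotangent complex that is $\cO[2d-2]$), and this is exactly what makes $\alpha^\vee$ restrict to an equivalence on the $\tau$-summand, so no ``up to shift'' is needed.
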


This geometric construction directly converts the arithmetic Frobenius scaling into a derived contact distribution. It extends to refined moduli problems in the categorical geometric Langlands program, linking the arithmetic geometry of local systems to the topology of shifted contact manifolds.

\subsection{Projective Coadjoint Stacks and Lie 2-Groups}

Connective structures on multiplicative gerbes define prequantizations of $BG$, yielding 2-shifted contact structures on the classifying stack of the associated Lie 2-group $\mathfrak{G}$ \cite[Section 3.4.2]{Tellez}. That construction relies on principal $B^2\Gm$-fibrations. We arrange a parallel geometry within our setup of derived symplectification via $\Gm$-torsors.
\vspace{1in}

Consider the classifying stack $BG$ of a reductive algebraic group $G$. The 2-shifted cotangent stack $T^*[2]BG \simeq \mathfrak{g}^\vee[1]/G$ carries a canonical 2-shifted symplectic structure  $\omega$ thanks to Theorem \ref{thm:calaque}. The multiplicative group $\Gm$ acts on the shifted fibers $\mathfrak{g}^\vee[1]$ by scaling. This action dilates the symplectic form $\omega$ with weight 1.

Let $T^*[2]BG^\circ$ denote the open substack corresponding to the complement of the zero section. By Theorem \ref{thm:derived transversality}, we then have the following result.

\begin{corollary} \label{cor: Lie 2-group}
  The derived stack quotient
  \begin{equation*}
    \mathbb{P}(T^*[2]BG) := [T^*[2]BG^\circ / \Gm]
  \end{equation*}
  inherits a canonical 2-shifted contact structure.
\end{corollary}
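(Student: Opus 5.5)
The plan is to apply Theorem \ref{thm:derived transversality} with $\widetilde{X} := T^*[2]BG^\circ$ and $n=2$. That theorem needs three inputs: (i) $\widetilde{X}$ is a derived Artin stack with an $n$-shifted symplectic form; (ii) $\widetilde{X}$ carries a $\Gm$-action; (iii) this action has weight 1 in the sense of the Derived Liouville Condition, i.e.\ $\rho^*\omega \simeq z\cdot \pi^*\omega$.

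For (i), Theorem \ref{thm:calaque} gives a canonical $2$-shifted symplectic form $\omega$ on $T^*[2]BG \simeq \mathbb{R}\mathbf{Spec}_{BG}(\mathbb{L}\mathrm{Sym}(\TT_{BG}[-2]))$. I will check that $T^*[2]BG^\circ$ is an open substack. Restricting a closed non-degenerate form to an open substack keeps both properties, because $\TT$ and $\LL$ restrict along open immersions. Hence $\omega|_{\widetilde{X}}$ is $2$-shifted symplectic.

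For (ii) and (iii), I will take the $\Gm$-action that scales the fibres. On relative spectra this is the grading on $\mathbb{L}\mathrm{Sym}_{\cO_{BG}}(\TT_{BG}[-2])$ that puts $\mathrm{Sym}^k$ in weight $k$. The zero section is the weight-$0$ locus, so it is $\Gm$-stable, and the action therefore restricts to the open complement $\widetilde{X}$.

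To prove the weight-1 condition, I will use Calaque's construction of the canonical form as $d_{dR}$ of the tautological Liouville $1$-form $\lambda$. The form $\lambda$ comes from the identity section $\TT_{BG}[-2]\to \TT_{BG}[-2]$ sitting in the $\mathrm{Sym}^1$ summand, so it is homogeneous of weight 1. The de Rham differential $d_{dR}$ commutes with the $\Gm$-action, being natural in the stack. It follows that $\rho^*\omega \simeq z\cdot\pi^*\omega$ as closed forms, with the required homotopies obtained functorially from the graded mixed structure.

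Once these hypotheses hold, Theorem \ref{thm:derived transversality} makes $[\widetilde{X}/\Gm]$ a $2$-shifted contact stack. Its line bundle $\cL$ is associated to the weight-1 character, and its contact form is the descent of $\alpha_{\widetilde{X}} = \Theta_\omega\circ Y$. The structure is canonical because every ingredient is: Calaque's form, the scaling action, and the descent.

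The main obstacle is a subtlety in applying Theorem \ref{thm:derived transversality} as stated. Its proof uses that $p\colon \widetilde{X}\to[\widetilde{X}/\Gm]$ is a principal $\Gm$-torsor whose relative tangent complex is $\cO_{\widetilde{X}}$, and stack quotients always give torsors. Here, though, the fibre $\mathfrak{g}^\vee[1]$ has no classical points away from zero: its underlying classical stack is just $BG$. So "complement of the zero section" must be read derived-theoretically, for instance as the locus where the shifted fibre coordinate is not nilpotent, and I must make sure that locus is non-empty and open. My first attempt is to define $T^*[2]BG^\circ$ as the open substack determined by the $\Gm$-stable ideal of positive weights, mirroring the Proj construction. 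If that turns out to be degenerate, I will instead accept the corollary formally: Theorem \ref{thm:derived transversality} applies to whatever $\Gm$-stable open substack is meant, since the argument uses only openness, $\Gm$-stability, and the weight-1 homogeneity of $\lambda$.
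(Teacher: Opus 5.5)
Your route is the paper's. The paper takes Calaque's form on $T^*[2]BG\simeq[\mathfrak{g}^\vee[1]/G]$, asserts that fibre scaling has weight $1$, restricts to the complement of the zero section, and invokes Theorem~\ref{thm:derived transversality}. Your checks of openness, $\Gm$-stability and the weight of the tautological form $\lambda$ (hence of $\omega=d_{dR}\lambda$) make explicit what the paper leaves implicit. The obstacle in your last paragraph, however, is real. The paper does not address it, and it resolves against both your argument and the paper's.

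\textbf{The fibre and the zero section.} For connective $A$, the space $\mathrm{Map}_A(A,\mathfrak{g}^\vee\otimes A[1])$ is connected with $\pi_1=\mathfrak{g}^\vee\otimes H^0(A)$. So the fibre $\mathfrak{g}^\vee[1]$ is the classifying stack $B\mathfrak{g}^\vee$ of the vector group, and $T^*[2]BG\simeq B(G\ltimes\mathfrak{g}^\vee)$.
\begin{itemize}
\item This stack is already classical, so its truncation is not $BG$ as you wrote.
\item $\mathrm{Sym}(\TT_{BG}[-2])=\mathrm{Sym}(\mathfrak{g}[-1])$ is not connective, so your relative spectrum must be read as a linear stack.
\item Its underlying space is a single point.
\item The zero section $BG\to B(G\ltimes\mathfrak{g}^\vee)$ is a smooth surjection with fibre $\mathfrak{g}^\vee$, not a closed immersion.
\end{itemize}
Open substacks correspond to open subsets of $|T^*[2]BG|=\{\ast\}$, so the complement of the zero section is empty.

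\textbf{Your Proj-style repair.} This gives the same answer. The weight-$k$ piece of $\mathrm{Sym}(\mathfrak{g}[-1])$ is $\wedge^k\mathfrak{g}$ placed in cohomological degree $k$. So the positive-weight ideal has nothing in degree $0$, and its non-vanishing locus is empty. The same holds for your ``fibre coordinate not nilpotent'' reading: those coordinates sit in odd positive degree.

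\textbf{Consequence.} $\mathbb{P}(T^*[2]BG)=\emptyset$, and the corollary as stated holds only vacuously. Your fallback of accepting it formally is exactly the paper's argument and proves nothing more.

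To get non-empty content you would have to drop the $\circ$ and apply Theorem~\ref{thm:derived transversality} to all of $T^*[2]BG$. Its hypotheses, as stated, do not require a free action. This would give a structure on $[T^*[2]BG/\Gm]\simeq B((G\times\Gm)\ltimes\mathfrak{g}^\vee)$. But it then rests entirely on the theorem at a $\Gm$-fixed point where the fundamental vector field $Y$ vanishes pointwise. That is precisely the situation removing the zero section was meant to exclude, so that step would need its own justification.
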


This projective stack provides an algebraic prequantization of $BG$, and hence Theorem \ref{thm:Leg intersection} applies to this space. Recall that for a closed subgroup $H \subset G$, the 2-shifted conormal stack $\mathcal{N}^*_{BH/BG}[2]$ defines a $\Gm$-invariant Lagrangian in $T^*[2]BG$. Combining the lifting argument introduced in the proof of Theorem \ref{thm:Leg intersection} with the contact reduction via Theorem \ref{thm:derived transversality}, its projectivization yields a Legendrian morphism $\mathbb{P}(\mathcal{N}^*_{BH/BG}[2]) \to \mathbb{P}(T^*[2]BG)$. 

\par From Theorem \ref{thm:Leg intersection}, we then conclude:
\begin{corollary}\label{cor: intersection of prequan}
    The derived fiber product of two such Legendrians $$\mathbb{P}(\mathcal{N}^*_{BH_1/BG}[2]) \to \mathbb{P}(T^*[2]BG) \leftarrow \mathbb{P}(\mathcal{N}^*_{BH_2/BG}[2])$$ associated with subgroups $H_1$ and $H_2$ carries a canonical 1-shifted contact structure.
\end{corollary}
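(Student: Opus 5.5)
The plan is to apply Theorem \ref{thm:Leg intersection} directly, with $X = \mathbb{P}(T^*[2]BG)$, $n=2$, and $L_i = \mathbb{P}(\mathcal{N}^*_{BH_i/BG}[2])$. This yields a $1$-shifted contact structure on the derived fiber product. The substance of the proof is therefore to verify the hypotheses. We must show that $\mathbb{P}(T^*[2]BG)$ is a $2$-shifted contact derived Artin stack, which is Corollary \ref{cor: Lie 2-group}. We must also show that each projectivized conormal morphism is Legendrian in the sense of Definition \ref{definition:legendrian structures}.

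\textbf{Step 1: the Lagrangians upstairs.} I would first record that the conormal morphism $\mathcal{N}^*_{BH/BG}[2] \to T^*[2]BG$ is Lagrangian for the canonical $2$-shifted symplectic form of Theorem \ref{thm:calaque}. This is the standard shifted conormal Lagrangian of Calaque, attached to the morphism $BH \to BG$. The Liouville form $\lambda$ on $T^*[2]BG$ restricts to zero on the conormal, and this gives a canonical, $\Gm$-invariant null-homotopy $h$. The $\Gm$-scaling of the fibres $\mathfrak{g}^\vee[1]$ preserves the subobject $(\mathfrak{g}/\mathfrak{h})^\vee[1]$, so the Lagrangian is $\Gm$-equivariant and has weight $1$.

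Next I restrict to the open complements of the zero sections. Set $\widetilde{L}_i := \mathcal{N}^*_{BH_i/BG}[2]^\circ := \mathcal{N}^*_{BH_i/BG}[2] \times^h_{T^*[2]BG} T^*[2]BG^\circ$. Restricting a Lagrangian to an open substack preserves the Lagrangian condition.

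\textbf{Step 2: descent to Legendrians.} I would then check that $\widetilde{L}_i \simeq L_i \times^h_X \widetilde{X}$ with $\widetilde{X} = T^*[2]BG^\circ$. Both sides are the $\Gm$-torsor over $L_i$ obtained by pulling back the torsor $p\colon \widetilde{X}\to X$, so the square is a homotopy pullback.

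With this in hand, I run Step 1 of the proof of Theorem \ref{thm:Leg intersection} in reverse:
\begin{enumerate}
\item Since $p^*\alpha \simeq \lambda$, the $\Gm$-equivariant null-homotopy of $\widetilde{f}_i^*\lambda$ descends to a null-homotopy of $f_i^*\alpha$ in $\LL_{L_i}\otimes f_i^*\cL[2]$.
\item The Lagrangian equivalence $\TT_{\widetilde{L}_i}\simeq q_i^*\LL_{L_i/X}[1]$ is $\Gm$-equivariant. Combining it with the Atiyah triangle $\cO \to \TT_{\widetilde{L}_i} \to q_i^*\TT_{L_i}$ of Lemma \ref{lem:atiyah}, descent along $q_i$ produces the fiber sequence $\cO_{L_i}\to \LL_{L_i/X}\otimes f_i^*\cL[1]\to \TT_{L_i}$.
\end{enumerate}
This is exactly the Legendrian condition, and it justifies the assertion made just before the corollary.

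\textbf{Main obstacle.} I expect the hardest part to be the descent in Step 2, specifically checking that the map $\cO_{L_i}\to \LL_{L_i/X}\otimes f_i^*\cL[1]$ obtained by descent is compatible with the fundamental vector field. The twist by $\cL$ must match the weight-$1$ equivariant structure of the Lagrangian equivalence. I would handle this by noting two facts:
\begin{enumerate}
\item $L_{qcoh}(L_i)$ is equivalent to weight-graded $\Gm$-equivariant sheaves on $\widetilde{L}_i$ (Definition \ref{def:equivariant_sheaves}).
\item $q_i^*$ is conservative.
\end{enumerate}
It then suffices to check exactness after pullback, which is the already-established sequence \eqref{eq:legendrian_pullback}.

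With both Legendrian morphisms established, Theorem \ref{thm:Leg intersection} gives the $(2-1)=1$-shifted contact structure on $L_1\times^h_X L_2$. Its symplectification is the equivariant Lagrangian intersection $\widetilde{L}_1\times^h_{\widetilde{X}}\widetilde{L}_2$.
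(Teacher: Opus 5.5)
Your proposal follows the paper's own argument: the paper likewise takes $\mathcal{N}^*_{BH/BG}[2]\to T^*[2]BG$ as a $\Gm$-invariant weight-$1$ Lagrangian, gets the Legendrian $\mathbb{P}(\mathcal{N}^*_{BH/BG}[2])\to\mathbb{P}(T^*[2]BG)$ by combining the lifting argument of Theorem \ref{thm:Leg intersection} with the descent of Theorem \ref{thm:derived transversality}, and then applies Theorem \ref{thm:Leg intersection} with $n=2$ to the contact stack of Corollary \ref{cor: Lie 2-group}; your version only spells out steps the paper leaves implicit (the torsor identification $\widetilde{L}_i\simeq L_i\times^h_X\widetilde{X}$, the descent of the null-homotopy of $\widetilde{f}_i^*\lambda$, and the matching of the $\cL$-twist with the weight). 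One caveat applies equally to the paper: since $\LL_{BG}[2]=\mathfrak{g}^\vee[1]$ is concentrated in cohomological degree $-1$, its space of sections over any connective cdga is connected, so the zero section $BG\to T^*[2]BG$ is surjective on points and the open complement $T^*[2]BG^\circ$ is empty, hence all stacks in the statement are empty and the corollary holds only vacuously.
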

    
 \section{Concluding Remarks}

In this paper, we have adapted the traditional transversality lemma from contact geometry to the framework of derived algebraic geometry and have shown the derived Legendrian intersection theorem, along with numerous applications, including the derived geometry of the discriminant loci of 1-jet bundles and certain moduli problems.

In brief, Theorem \ref{thm:derived transversality} establishes the categorical counterpart of the previously mentioned lemma for derived Artin stacks. Here, the central difference between the classical and derived settings can be summarized as follows: the transversality requirement (transverse hypersurface-local slice) in the classical setting arises because the global orbit space is often not a smooth, Hausdorff manifold. The derived stack quotient, on the other hand, incorporates these pathological orbits, thereby bypassing the local slice and achieving global contact reduction.

Theorem \ref{thm:Leg intersection} proves the derived Legendrian intersection theorem via base change and an $\infty$-categorical descent cube, along with a certain lifting property arising from the symplectic-contact dictionary. As shown in the proof of Theorem \ref{thm:Leg intersection}, this lifting property translates intersection problems in contact
moduli to equivariant intersection problems in symplectic moduli, and hence establishes the desired result. 

As a first application of the Legendrian intersection theorem, Theorem \ref{thm:Leg intersection}, we study in Example \ref{example: ddislocus} the derived geometry of the discriminant loci of 1-jet bundles and show that these loci possess a $(-1)$-shifted contact structure.

In the last section, we present further applications of Theorem \ref{thm:derived transversality} and Theorem \ref{thm:Leg intersection} to moduli theory, see  Corollary \ref{cor:projective_higgs}, Corollary \ref{cor: l-adic local sys}, Corollary \ref{cor: Lie 2-group}, and Corollary \ref{cor: intersection of prequan}, focusing on the moduli spaces of projective Higgs bundles, $\ell$-adic local systems, and Lie 2-groups.

\section*{Acknowledgements}
Kadri \.{I}lker Berktav posed the initial questions of formulating the transversality lemma and Legendrian intersections in the derived setup. He supervised the research, provided ideas for the proof of the Legendrian intersection theorem (Theorem \ref{thm:Leg intersection}), suggested directions for the moduli applications, and contributed the derived discriminant locus application (Example \ref{example: ddislocus}). 
Efe \.{I}zbudak formulated the statements of the derived transversality theorem (Theorem \ref{thm:derived transversality}) and the Legendrian intersection theorem (Theorem \ref{thm:Leg intersection}), provided the proofs, and constructed the applications to moduli problems (Section \ref{sec:moduli}).
The authors warmly thank the Higher Structures Research Group at METU for their support.

\end{document}